\documentclass[12pt, reqno]{amsart}
\usepackage{amsmath, amsthm, amscd, amsfonts, amssymb, graphicx, color,enumerate,eucal,multirow,makecell,threeparttable}
\usepackage{enumitem}
\usepackage{xcolor}
\usepackage{extarrows}
\usepackage{multirow}
\usepackage{booktabs}
\newcommand{\brackbinom}[2]{\genfrac{[}{]}{0pt}{}{#1}{#2}}
\usepackage[bookmarksnumbered, colorlinks, plainpages]{hyperref}
\hypersetup{colorlinks=true,linkcolor=red, anchorcolor=green, citecolor=cyan, urlcolor=red, filecolor=magenta, pdftoolbar=true}

\newtheorem{theorem}{Theorem}[section]
\newtheorem{lemma}[theorem]{Lemma}
\newtheorem{proposition}[theorem]{Proposition}
\newtheorem{corollary}[theorem]{Corollary}
\theoremstyle{definition}
\newtheorem{definition}[theorem]{Definition}
\newtheorem{example}[theorem]{Example}

\theoremstyle{remark}
\newtheorem{remark}[theorem]{Remark}
\numberwithin{equation}{section}

\begin{document}

\setcounter{page}{1}

\title[]{Brown-Halmos type theorems for generalized Cauchy singular integral operators and applications}

\author[Yuanqi. Sang \MakeLowercase{and} Liankuo Zhao]{Yuanqi. Sang,$^1$ \MakeLowercase{and} Liankuo Zhao $^2$}

\address{$^{1}$School of Mathematics, Southwestern University of Finance and Economics, Chengdu, 611130, People’s Republic of China}
\email{\textcolor[rgb]{0.00,0.00,0.84}{sangyq@swufe.edu.cn}}

\address{$^{2}$School of Mathematics and Computer Science, Shanxi Normal University, Taiyuan, 030031, People’s Republic of China}
\email{\textcolor[rgb]{0.00,0.00,0.84}{lkzhao@sxnu.edu.cn}}


\let\thefootnote\relax\footnote{}
\keywords{generalized Cauchy singular integral operators, Toeplitz operators, Hankel operators, commutativity}
\subjclass[2020]{Primary 47B35; Secondary 47A08.}

\begin{abstract}
We investigate the commutativity and semi-commutativity of generalized singular integral operators of the form
\[
P_{+} f P_{+} + P_{-} g P_{+} + P_{+} u P_{-} + P_{-} v P_{-}
\]
on $L^{2}$,  where $P_{+}$ denotes the Riesz projection and  $P_{-}=I-P_{+}.$
Building on this analysis, we develop a unified approach to studying the algebraic properties 
of operator classes on $L^{2}$ that are generated by multiplication operators together with the Riesz projection.
 These classes include, but are not limited to, 
 Toeplitz+Hankel operators, singular integral operators, 
 Foguel--Hankel operators, and asymmetric dual truncated Toeplitz operators. 
 We provide complete characterizations of (i) the quasinormality of singular integral operators; 
 and (ii) the necessary and sufficient conditions under which the product of two asymmetric dual truncated Toeplitz operators 
 is again an asymmetric dual truncated Toeplitz operator. 
 In addition, our methods provide new proofs of several known results, including the classical Brown–Halmos theorems 
 and the commutativity of Hankel operators, singular integral operators,
and dual truncated Toeplitz operators. We also improved the conditions for the normality of singular integral operators.
\end{abstract}

\maketitle

\section{Introduction}
The Brown-Halmos theorems \cite{Brown1963} explore when the product of two Toeplitz operators on the Hardy space
is another Toeplitz operator, as well as when two Toeplitz operators commute.
These results play a crucial role in the spectral theory of Toeplitz operators \cite{douglas1998banach,bottcher2013analysis}.
Many studies involve extending the Brown-Halmos theorem to other function spaces, including spaces of analytic or harmonic functions.

As a dilation of Toeplitz operators, singular integral operators have deep theoretical roots
in  one-dimensional singular integral
equations \cite{siomikhlin,Gohberg1992a}.
In the research of operator theory, some operators closely related to singular integral
operators have been discovered,
such as Foguel-Hankel operators \cite{Foguel1964,Aleksandrov1996,Pisier1997,Davidson1997}, 
dual truncated Toeplitz operator \cite{Sang2018,Camara2020,gu2021,camara2022asymmetric}, 
$S\oplus S^{*}$\cite{camara2021dual,timotin2020invariant,gu2022invariant}, etc.
Generalized Cauchy singular integral operators extended these operators.
To better understand the algebraic and spectral properties of these operators, 
this paper investigates when the product of two generalized Cauchy singular integral operators 
remains a generalized Cauchy singular integral operator and when such operators commute.

Let $L^{2}$ be the Hilbert space of all square-integrable functions on the unit circle $\mathbb{T}=\{z\in\mathbb{C}: |z|=1\}$.
The Hardy space $H^{2}$ is the closed subspace
of $L^{2}$ consisting of functions whose negative Fourier coefficients vanish.
Let $P_{+}$ be the orthogonal projection from $L^{2}$ onto $H^{2},P_{-}=I_{L^{2}}-P_{+}.$

For $f,g\in L^{\infty},$
the space of essentially bounded measurable functions on $\mathbb{T}$,
 the singular integral operator $S_{f,g}$ on $L^{2}$ is defined by
\begin{align}\label{Sfg}
S_{f,g}x=fP_{+}x+gP_{-}x,\quad x\in L^{2}.
\end{align}

However, $S^{*}_{f,g}$ generally cannot be expressed in form \eqref{Sfg}.

In a given linear space $X$, denote by $X_N$ the space of all $N$-dimensional vectors with components in 
$X$, and by $X_{N \times N}$ the space of $N \times N$ matrices with entries in $X$.
\begin{definition}
If $H=\begin{bmatrix} f& u\\ g & v\end{bmatrix} \in L_{2\times2}^{\infty}$, the generalized Cauchy singular integral operator with symbol $H$ is denoted by $R_{H}$ and defined by:
\begin{align*}
R_{H}x = P_{+}fP_{+}x + P_{-}gP_{+}x + P_{+}u P_{-}x + P_{-}v P_{-}x,\quad x\in L^{2}.
\end{align*}
\end{definition}
In this paper, we use the abbreviations SIO for the singular integral operator 
and GSIO for the generalized Cauchy singular integral operator. 
For more general form of singular integral operators, please refer to \cite[(15)]{Ehrhardt}.
The GSIO
 $R_{\begin{bmatrix}
\begin{smallmatrix}
 f & u \\
g & v
\end{smallmatrix}
\end{bmatrix}}$
can be expressed as an operator matrix with respect to
the decomposition
$L^{2}=H^{2}\oplus \bar{z}\overline{H^{2}},$ that is,
\begin{gather}\label{M}
\begin{bmatrix}T_{f}& H^{*}_{\bar {u}}\\
		H_{g} &   \tilde{T}_{v} \end{bmatrix},
\end{gather}
where 
$T_{f}$ denotes the Toeplitz operator on $H^{2}$ such that
\begin{align*}
T_{f}x=P_{+}(f x),\quad  x\in H^{2};
\end{align*}
$H_{g}$ denotes the Hankel operator on $H^{2}$ such that
\begin{align}\label{HD}
H_{g}x=P_{-}(g x),\quad  x\in H^{2};
\end{align}
$H^{*}_{\bar {u}}$ denotes the adjoint of Hankel operator  such that
\begin{align*}
H^{*}_{\bar {u}}y=P_{+}(u y),\quad  y\in \bar{z}\overline{H^{2}};
\end{align*}
$\tilde{T}_{v}$ denotes the dual Toeplitz operator on $\bar{z}\overline{H^{2}}$ such that
\begin{align*}
 \tilde{T}_{v}y=P_{-}(v y),\quad  y\in \bar{z}\overline{H^{2}}.
\end{align*}

Note that 
$S_{f,g}
=R_{\begin{bmatrix}
\begin{smallmatrix}
 f & g \\
f & g
\end{smallmatrix}
\end{bmatrix}}.$ 
In contrast to singular integral operators \eqref{Sfg}, the adjoint of a GSIO remains 
a GSIO, that is, $R^*_{\begin{bmatrix}
\begin{smallmatrix}
 f & u \\
g & v
\end{smallmatrix}
\end{bmatrix}}
=R_{\begin{bmatrix}
\begin{smallmatrix}
 \bar{f} & \bar{g} \\
\bar{u} & \bar{v}
\end{smallmatrix}
\end{bmatrix}}.$ 
In addition, GSIOs\cite[page 3]{sang2023algebras} 
are closely associated with multiplication operators, 
the Hilbert transform, (Dual) Truncated Toeplitz operators,
Toeplitz+Hankel operators,
block Toeplitz operators, the dilation of truncated Toeplitz operators, Foguel-Hankel operators, 
duplex Toeplitz opertors, and symmetrized Hankel operators\cite{sobolev2022multichannel}.
\subsection{Motivation}

Since a GSIO is a $2 \times 2$ operator matrix composed of Toeplitz and Hankel operators, 
we will separately review studies on product problems for these operators. Furthermore, 
we will review existing studies on algebraic properties related to SIOs, 
truncated Toeplitz operators, and  dual truncated Toeplitz operators.

On Toeplitz operators.
The Brown-Halmos theorems refer to a pair of theorems established by Brown and Halmos \cite{Brown1963} 
regarding Toeplitz operators on the Hardy space. 
For Toeplitz operators $T_{f}$ and $T_{g}$, which state that:
\begin{itemize}
	\item \textbf{Semi-commutativity}: $T_{f}T_{g}=T_{fg}$ if and only if either $g$ is analytic or $f$ is coanalytic.
    \item \textbf{Commutativity}: $T_{f}T_{g}=T_{g}T_{f}$ if and only if either both $f$ and $g$ are analytic, or both are coanalytic, or a nontrivial linear combination of $f$ and $g$ is constant.
\end{itemize}
Furthermore, Gu and Zheng \cite{gu1998products} investigated Brown-Halmos-type theorems for block Toeplitz operators.

Halmos' Problem 5 \cite{halmos1970ten} states that: "Is every subnormal Toeplitz operator either normal or analytic?" 
Abrahamse \cite{abrahamse1976subnormal} gave a general sufficient condition for an affirmative answer to Halmos' Problem 5. 
Halmos \cite{halmos1979ten} almost certainly believed the answer to his question would be yes. However, 
Halmos' Problem 5 was answered in the negative by Sun \cite{shunhua1984hyponormal,shunhua1985hyponormal,shunhua1985toeplitz} 
and Cowen and Long \cite{cowen1984some}.
Since there is currently no complete characterization of the symbols for subnormal Toeplitz operators, 
we revisit quasinormality—an intermediate property between normality and subnormality.
Amemiya, Ito, and Wong \cite{amemiya1975quasinormal} proved that 
a quasinormal Toeplitz operator is either normal or analytic.

On Hankel operators. Yoshino \cite{yoshino1999conditions} characterizes when the product of 
two Hankel operators is also a Hankel operator through the properties of their symbols.
 Martínez-Avendaño and Rosenthal \cite{martinez2007introduction} addressed this problem,
  stating that the product of two nonzero Hankel operators is a Hankel operator if and only if 
  both operators are constant multiples of the same rank-one Hankel operator. Additionally, 
  Gu \cite{gu2003product} characterized when the product of two block Hankel operators is a Hankel operator. 
  Gu and Zheng \cite{gu1998products} proved that two nonzero Hankel operators commute if and only if one is
   a constant multiple of the other. A quasinormal Hankel operator must be normal and a scalar multiple 
   of a self-adjoint Hankel operator \cite[4.4.9,4.4.11]{martinez2007introduction}. 
   The Hankel operators defined here differ from those in \eqref{HD}, but both types share the same properties.

On SIOs. Nakazi and Yamamoto \cite{Nakazi2014} investigated the normality of SIOs.
Nakazi \cite{nakazi2018hyponormal} studied the hyponormality of SIOs with special symbols.
Ko and Lee \cite{ko2021quasinormality} studied the quasinormality of SIOs.
Gu \cite{Gu2016} established characteristic equations for SIOs and used them to study algebraic properties 
such as commutativity and semi-commutativity of SIOs.
Recently, C{\^a}mara, Guimar{\~a}es, and Partington \cite{camara2023paired} have provided a concise proof for the semi-commutativity of SIOs.

Within the study of operator theory on model spaces, 
Câmara and Partington \cite{camara2017} established that asymmetric truncated Toeplitz operators 
are equivalent after extension to Toeplitz operators with $2 \times 2$ matrix-valued symbols. 
They also demonstrated \cite{Camara2020} that dual truncated Toeplitz operators are equivalent after extension 
to  SIOs with matrix-valued symbols. Thus, both truncated and dual 
truncated Toeplitz operators are equivalent after extension to SIOs with matrix-valued symbols. 
Furthermore, GSIOs are equivalent after extension to 
SIOs with matrix-valued symbols \cite{Ehrhardt,sang2023algebras}.

Sedlock \cite{zbMATH05914336} discovered that, except in trivial cases, the product of two truncated Toeplitz operators is itself 
a truncated Toeplitz operator if and only if both operators belong to the same Sedlock class, 
and their product also belongs to that class.
Chalendar and Timotin \cite{chalendar2013commutation} proved that two truncated Toeplitz operators commute, except in trivial cases, if and only if they belong to the same Sedlock class.
Similarly, analogous results hold for the semi-commutativity and commutativity of dual truncated Toeplitz operators, 
with the Sedlock class replaced by a new function class \cite{Sang2018,sang2020theorem,gu2021}.

Naturally, we  consider the following two questions.
\begin{enumerate}[label=\textbf{Question \arabic*}, leftmargin=3cm]
    \item: When is the product of two GSIOs also a GSIO? \label{Q11}
    \item: Under what conditions do two GSIOs commute?    \label{Q22}
\end{enumerate}

\subsection{The main results}
The primary challenge in studying \ref{Q11} and \ref{Q22} lies in the need to simultaneously characterize the conditions that eight functions must satisfy.

Let $V$ be the anti-unitary operator on $L^{2}$ defined by $Vf=\bar{z}\bar{f}$ for $f\in L^{2}.$ Here, $f_{+}$ denotes $P_{+}f$ and $f_{-}$ denotes $P_{-}f.$

The following theorem provides a complete solution to \ref{Q11}.

\noindent{\bf{Theorem \ref{main1}}}\,\,\,
If ${H_{1}=\begin{bmatrix}
	\begin{smallmatrix}
	   f_{1}& u_{1}\\
	   g_{1} & v_{1} \end{smallmatrix}
	\end{bmatrix}},
	{H_{2}=
	\begin{bmatrix}
	\begin{smallmatrix}
	   f_{2}& u_{2}\\
	   g_{2} & v_{2}
	\end{smallmatrix}
	\end{bmatrix}}
	\in L_{2\times2}^{\infty},$
	the following statements are equivalent.
	\begin{enumerate}
	\item The product $R_{H_{1}}R_{H_{2}}$ is a generalized Cauchy singular integral operator; 
	\item The following equation holds:
		\begin{align*}
			\brackbinom{V(\bar{f_{1}})_{-}}{g_{1-}}\otimes \brackbinom{Vf_{2-}}{(\bar{u}_{2})_{-}}
			=\brackbinom{V(\bar{u}_{1})_{-}}{v_{1-}}\otimes \brackbinom{Vg_{2-}}{(\bar{v}_{2})_{-}}.
			\end{align*}
	\item One of the following conditions holds.
		   \begin{enumerate}[label=(\arabic{enumi}.\arabic*)]
			\item Either 
	$\brackbinom{\bar{f}_{1}}{g_{1}}\in H_{2}^{\infty}$ or $\brackbinom{f_{2}}{\bar{u}_{2}}\in H_{2}^{\infty},$ and either
	  $\brackbinom{\bar{u}_{1}}{v_{1}}\in H_{2}^{\infty}$ or $\brackbinom{g_{2}}{\bar{v}_{2}}\in H_{2}^{\infty};$
			\item  If (3.1) does not happen, there exists a nonzero constant $\lambda$ such that
			$\bar{f}_{1}-\bar{\lambda}\bar{u}_{1},g_{1}-\lambda v_{1},g_{2}-\lambda f_{2},
			\bar{v}_{2}-\bar{\lambda}\bar{u}_{2}\in H^{\infty}.$
		   \end{enumerate}
\end{enumerate}

Theorems \ref{main2}, \ref{mainc1}, \ref{mainc2}, and \ref{R2} collectively provide a complete solution to Question \ref{Q22}. 
Given their lengthy formulations, the precise statements of these theorems are omitted here for brevity.

The following theorem characterizes when the product of two asymmetric dual truncated Toeplitz operators equals 
an asymmetric dual truncated Toeplitz operator.

\noindent{\bf{Theorem \ref{ADTP}}}\,\,\,
Let $\phi,\psi,\sigma\in L^{\infty},$ and let $\alpha,\beta,\theta$ be three non-constant inner functions.
Then $D^{\alpha,\beta}_{\psi} D^{\theta,\alpha}_{\phi}=D^{\theta,\beta}_{\sigma}$ if and only if
one of the following statements holds.
\begin{enumerate}
\item $\psi,\bar{\alpha}\beta\bar{\psi}\in H^{\infty};$
\item $\bar{\phi},\theta\bar{\alpha}\phi\in H^{\infty};$
\item $\bar{\phi},(1-\lambda \bar{\alpha})\theta\phi,(\bar{\alpha}-\bar{\lambda})\beta\bar{\psi},(\alpha-\lambda)\psi,
(1-\lambda\bar{\alpha})\theta\phi\psi
\in H^{\infty},$ where $\lambda$ is a constant with $|\lambda|< 1;$
\item $\psi,(1-\lambda \bar{\alpha})\beta\bar{\psi},(\bar{\alpha}-\bar{\lambda})\theta\phi,(\alpha-\lambda)\bar{\phi},
(1-\lambda\bar{\alpha})\beta\bar{\phi}\bar{\psi}
\in H^{\infty},$ where $\lambda$ is a constant with $|\lambda|< 1.$
\end{enumerate}
In this case, $\sigma=\phi\psi.$

The following theorem provides the necessary and sufficient conditions for a SIO to be quasinormal.

\noindent{\bf{Theorem \ref{quasinormal}}}\,\,\,
	Let $f$ and $g$ be functions in $L^{\infty}.$ 
	Then $S_{f,g}$ is quasinormal 
	if and only if one of the following statements holds.
\begin{enumerate}
	\item $|f|$ and $|g|$ are constants, $f,\bar{g}\in H^{\infty};$
	\item $|f|=|g|$ is constants, $f\bar{g}\in H^{\infty};$
	\item $f-g$ and $f|g|^2 -g|f|^2$ are constants, and $|f|^{2}_{-}=|g|^{2}_{-}=(f\bar{g})_{-};$
	\item $\brackbinom{Vf_{-}}{(\bar{g})_{-}}\neq 0,\brackbinom{V(|f|^{2}-f\bar{g})_{-}}{(f\bar{g}-|g|^{2})_{-}}\neq 0$ and one of the following statements holds.
	\begin{enumerate}
			\item There are  constants $\mu$ and $\alpha(\neq 0)$ such that $f-\mu g$ is constant,
	 $|f|^{2}-f\bar{g}-\bar{\alpha}\bar{g},|g|^{2}-f\bar{g}+\alpha g,
	 f(\mu \bar{f}-\bar{g}-\alpha),
	 \bar{g}(\bar{\mu}f-g-\bar{\alpha}),
	 fg(\mu \bar{f}-\bar{g}-\alpha),
	 \bar{g}^{2}(\bar{\mu}f-g-\bar{\alpha})\in H^{\infty};$
	\item There are  constants $\mu$ and $\alpha(\neq 0)$ such that $g-\mu f$ is constant,
 $|f|^{2}-f\bar{g}-\bar{\alpha}\bar{f},|g|^{2}-f\bar{g}+\alpha f,f(\bar{f}-\mu \bar{g}-\alpha),
 \bar{g}(f-\bar{\mu}g-\bar{\alpha}),
 \bar{f}\bar{g}(f -\bar{\mu}g-\bar{\alpha}),
f^2(\bar{f}-\mu \bar{g}-\alpha)\in H^{\infty};$
   \item There are  constants $\mu$ and $\alpha(\neq 0)$ such that $f(\bar{f}-\bar{\mu}\bar{g}),\bar{g}(f-\mu g)$,
 $|f|^{2}-f\bar{g}-\bar{\alpha}(\mu\bar{f}-\bar{g}),
 f\bar{g}-|g|^{2}-\alpha (\bar{\mu}f-g),
 f\bar{g}-\alpha f,
 |g|^2 -\bar{\alpha}\bar{g},
\bar{g}(|f|^2-|g|^2-\bar{\alpha}\mu\bar{f}+\bar{\alpha}\bar{g}),
f(|f|^{2}-|g|^2-\alpha\bar{\mu} f+\alpha  g)\in H^{\infty};$
  \item There are  constants $\mu$ and $\alpha(\neq 0)$ such that $f(\bar{g}-\bar{\mu}\bar{f}),\bar{g}(g-\mu f),$
$|f|^{2}-f\bar{g}-\bar{\alpha}(\bar{f}-\mu\bar{g}),
f\bar{g}-|g|^{2}-\alpha (f-\bar{\mu}g),
f\bar{g}-\bar{\alpha}\bar{g},
|f|^2 -\alpha f,
\bar{g}(|f|^2-|g|^2+\bar{\alpha}\mu\bar{g}-\bar{\alpha}\bar{f}),
f(|f|^{2}-|g|^2-\alpha \bar{\mu} g+\alpha  f)\in H^{\infty}.$
	\end{enumerate}
\end{enumerate}

The paper is structured as follows: Section \ref{Pre} introduces three key lemmas for this paper. 
Section \ref{4} proves Theorem \ref{main1}, establishing the necessary and sufficient conditions under which the product of two GSIOs 
is a GSIO. Section \ref{commute} presents the commutativity conditions for two GSIOs. Section \ref{5} applies the conclusions 
from Sections \ref{4} and \ref{commute} to SIOs and asymmetric dual truncated Toeplitz operators. 
This section establishes the necessary and sufficient conditions for when the product of 
two asymmetric dual truncated Toeplitz operators is an asymmetric dual truncated Toeplitz operator, 
characterizes quasinormal SIOs, and, using a unified approach, reproves the commutativity and semi-commutativity 
of both SIOs and dual truncated Toeplitz operators separately. Appendix A presents a simple application of 
Lemma \ref{key} to zero products of two $2\times 2$ matrices. 
Appendix B provides a major portion of the computations required for the proof of Theorem \ref{mainc2}.

\section{Preliminaries}\label{Pre}
Write $M_{f}$ for the multiplication operator defined on $L^{2}$ by $M_{f}x=fx,$ for all $x\in L^{2}.$
If $M_{f}$ is expressed as an operator matrix for the decomposition
$L^{2}=H^{2}\oplus \bar{z}\overline{H^{2}},$ the result is of the form
\begin{gather*}
M_f=\begin{bmatrix}T_{f}&{H^{*}_{\bar{f}}} \\ H_{f} & \tilde{T}_{f}\end{bmatrix}.\quad
\end{gather*}
If $f$ and $g$ are in $L^{\infty},$ then $M_{f}M_{g}=M_{fg},$ and therefore
\begin{align}
T_{fg}&=T_{f}T_{g}+H^{*}_{\bar{f}}H_{g};  \label{eq:1m} \\
H_{fg}&=H_{f}T_{g}+\tilde{T}_{f}H_{g};            \label{eq:2m}      \\
H^{*}_{\bar{f}\bar{g}}&=T_{f}H^{*}_{\bar{g}}+H^{*}_{\bar{f}}\tilde{T}_{g};        \label{eq:4m}      \\
\tilde{T}_{fg}&=\tilde{T}_{f}\tilde{T}_{g}+H_{f}H^{*}_{\bar{g}}.    \label{eq:3m}
\end{align}

Let $h \in L^{\infty}$. $H_{h}=0$ if and only if $h \in H^{\infty}$,
which is defined as $L^{\infty}\cap H^{2}$, by the definition of the Hankel operator \eqref{HD}.
Hence,
\begin{enumerate}
	\item if either $\bar{f}\in H^{\infty}$ or $g\in H^{\infty},$ then  \eqref{eq:1m} can be reduced to
	\begin{align}\label{TT}
	T_{f}T_{g}=T_{fg};
	\end{align}
	\item if $f\in H^{\infty},$ then \eqref{eq:2m} implies that
\begin{align}\label{HT}
\tilde{T}_{f}H_g = H_g T_{f}=H_{fg};
\end{align}
\item if either $f\in H^{\infty}$ or $\bar{g}\in H^{\infty},$ then  \eqref{eq:3m} can be reduced to
\begin{align}\label{DD}
\tilde{T}_{f}\tilde{T}_{g}=\tilde{T}_{fg}.
\end{align}
\end{enumerate}

The operator $T$ is a Toeplitz operator on $H^{2}$ if and only if it satisfies the equation $T_{\bar{z}}TT_{z}=T$ \cite[Theorem 6]{Brown1963}.
A dual Toeplitz operator is anti-unitarily equivalent to a Toeplitz operator. That is,
for $f\in L^{\infty},$ we have 
\begin{align}\label{TVV}
T_{f}=V\tilde{T}_{\bar{f}}V.
\end{align}
Therefore, we can obtain a characterization of the dual Toeplitz operator.
\begin{proposition}\label{dual}
The operator $A$ is a dual Toeplitz operator on $\bar{z}\overline{H^{2}}$ if and only if it satisfies the equation $\tilde{T}_{z}A\tilde{T}_{\bar{z}}=A.$
\end{proposition}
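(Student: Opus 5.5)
The plan is to transport the Brown--Halmos characterization \cite[Theorem 6]{Brown1963} of Toeplitz operators to $\bar{z}\overline{H^{2}}$ through the anti-unitary $V$, using \eqref{TVV}. First I check that $V$ maps $H^{2}$ onto $\bar{z}\overline{H^{2}}$ and conversely. If $x=\sum_{n\ge 0}a_nz^n\in H^{2}$, then $Vx=\bar z\sum_{n\ge0}\bar a_n\bar z^{n}\in\bar z\overline{H^{2}}$. Since $V^{2}=I$ (indeed $V(\bar z\bar x)=\bar z\,\overline{\bar z\bar x}=x$), $V$ is an anti-unitary involution exchanging $H^{2}$ and $\bar z\overline{H^{2}}$. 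So \eqref{TVV} reads $T_f=V\tilde T_{\bar f}V|_{H^{2}}$, and equivalently $\tilde T_{g}=VT_{\bar g}V|_{\bar z\overline{H^{2}}}$.

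Next I compute the conjugates of the shifts. For $y\in\bar z\overline{H^{2}}$ we have $T_{\bar z}Vy=P_{+}(\bar z\,\bar z\bar y)$ and $V\tilde T_{z}y=\bar z\,\overline{P_{-}(zy)}$. Since $V$ commutes with the projections in the swapped sense, $VP_{-}=P_{+}V$, and $V(zy)=\bar z\,\bar z\bar y=\bar z\,Vy$, we get $V\tilde T_{z}=T_{\bar z}V$ on $\bar z\overline{H^{2}}$. Similarly $V\tilde T_{\bar z}=T_{z}V$. (This also follows directly from \eqref{TVV} with $f=\bar z$ and $f=z$.)

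Now let $A$ be a bounded operator on $\bar z\overline{H^{2}}$ and set $T=VAV$. Then $T$ is a bounded complex-linear operator on $H^{2}$, since two anti-linear maps compose to a linear one. Using $V^{2}=I$,
\[
V\tilde T_{z}A\tilde T_{\bar z}V=(V\tilde T_{z}V)(VAV)(V\tilde T_{\bar z}V)=T_{\bar z}TT_{z}.
\]
Hence $\tilde T_{z}A\tilde T_{\bar z}=A$ holds if and only if $T_{\bar z}TT_{z}=T$. By \cite[Theorem 6]{Brown1963}, this is equivalent to $T=T_{f}$ for some $f\in L^{\infty}$. That in turn is equivalent to $A=VT_fV=\tilde T_{\bar f}$, by \eqref{TVV} and $V^2=I$, i.e.\ $A$ is a dual Toeplitz operator.

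For the converse I use the same chain: if $A=\tilde T_{g}$, then $VAV=T_{\bar g}$ satisfies the Brown--Halmos identity. Alternatively, one can check it directly: $\tilde T_{z}\tilde T_{g}\tilde T_{\bar z}=\tilde T_{zg\bar z}=\tilde T_{g}$ by \eqref{DD}, since $z\in H^{\infty}$ and $\overline{\bar z}=z\in H^{\infty}$.

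The only delicate point is the bookkeeping with the anti-linear $V$: verifying the intertwining $VP_{-}=P_{+}V$ and confirming that $VAV$ is linear and bounded. Everything else is a direct transfer of the Brown--Halmos theorem.
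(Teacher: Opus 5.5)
Your proof is correct and follows the same route as the paper, which obtains Proposition~\ref{dual} by transporting the Brown--Halmos criterion \cite[Theorem 6]{Brown1963} through the anti-unitary $V$ via \eqref{TVV}. Your checks that $VAV$ is linear and bounded and that $V\tilde{T}_{z}V=T_{\bar z}$, $V\tilde{T}_{\bar z}V=T_{z}$ spell out the bookkeeping that the paper leaves implicit.
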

Moreover, $V$ exhibits the following properties: 
\begin{align}
	P_{+}V&=VP_{-};\nonumber \\
	H_{\phi}&=VH^{*}_{\phi}V.\quad \forall \phi\in L^{\infty}\label{VV}
\end{align}
For vectors $p$ and $q$ are in a Hilbert space $\mathcal{H},$  the rank one operator $p\otimes q$ is defined by
$(p\otimes q)(x)=\left\langle x,q\right\rangle p\ \ (x\in \mathcal{H}).$
The identity operator on the Hilbert space $\mathcal{H}$ is represented by $I_{\mathcal{H}}.$
\begin{lemma}\label{k1}
Let $f_i,g_i\in L^{\infty}(i=1,2,\cdots,n),$ 	the following statements are equivalent.
\begin{enumerate}
	\item $\sum\limits_{i=1}^{n}H^{*}_{f_i}H_{g_i}$ is a  Toeplitz operator;
	\item $\sum\limits_{i=1}^{n}H_{f_i}H^{*}_{g_i}$ is a dual Toeplitz operator;
	\item $\sum\limits_{i=1}^{n}H^{*}_{f_i}H_{g_i}=0;$
	\item $\sum\limits_{i=1}^{n}H_{f_i}H^{*}_{g_i}=0;$
	\item $\sum\limits_{i=1}^{n}(f_{i-}\otimes g_{i-})=0.$
\end{enumerate}
\end{lemma}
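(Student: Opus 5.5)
The plan is to close the cycle of equivalences through the Brown--Halmos characterization $T_{\bar z}TT_z=T$ and the rank-one structure of Hankel products under the shift. By \eqref{VV} and \eqref{TVV}, conjugation by the anti-unitary $V$ sends $H^{*}_{f}H_{g}$ to an operator of the form $H_{\bar f}H^{*}_{\bar g}$ and sends Toeplitz operators to dual Toeplitz operators. Hence (1)$\Leftrightarrow$(2) and (3)$\Leftrightarrow$(4) reduce to this bookkeeping, and it suffices to prove (1)$\Rightarrow$(5), (5)$\Rightarrow$(3) and (3)$\Rightarrow$(1). The implication (3)$\Rightarrow$(1) is trivial, since $0=T_0$.

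The key computation is the identity
\[
T_{\bar z}H^{*}_{f}H_{g}T_z-H^{*}_{f}H_{g}=-(V f_{-})\otimes (V g_{-}),
\]
possibly up to conjugations and the exact normalization of $V$, which I would fix by direct computation. To derive it, apply \eqref{eq:2m} with the analytic symbol $z$: this gives $H_gT_z=\tilde T_zH_g$. One then checks that $\tilde T_{\bar z}\tilde T_z=I-e\otimes e$ with $e=\bar z$, because multiplying by $z$ and projecting onto $\bar z\overline{H^2}$ kills exactly the $\bar z$ coefficient. Therefore
\[
T_{\bar z}H^{*}_{f}H_{g}T_z=H^{*}_{f}\tilde T_{\bar z}\tilde T_z H_g=H^*_fH_g-(H^*_f\bar z)\otimes(H^*_g\bar z).
\]
Finally, $H^{*}_{f}\bar z=P_+(\bar f\bar z)$, and this equals $V$ applied to $f_-$, up to the convention for $V$. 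Summing over $i$, with $A=\sum_i H^{*}_{f_i}H_{g_i}$, we get
\[
T_{\bar z}AT_z-A=-\sum_i (Vf_{i-})\otimes(Vg_{i-}).
\]

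For (1)$\Rightarrow$(5): if $A$ is Toeplitz, then $T_{\bar z}AT_z=A$ by \cite[Theorem 6]{Brown1963}, so $\sum_i (Vf_{i-})\otimes(Vg_{i-})=0$. Since $V$ is anti-unitary and bijective from $\bar z\overline{H^2}$ onto $H^2$, the map $p\otimes q\mapsto Vp\otimes Vq$ is injective and real-linear on sums of rank-ones: for any $x$ we have $\sum\langle x,Vg_i\rangle Vf_i=V\sum\langle g_i, V^{-1}x\rangle$-type expressions. Hence $\sum_i f_{i-}\otimes g_{i-}=0$.

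For (5)$\Rightarrow$(3): the same identity gives $T_{\bar z}AT_z=A$, so $A=T_\varphi$ is Toeplitz. Iterating, $A=T_{\bar z}^nAT_z^n$, and $T_z^n\to0$ in the weak sense while $H_{g}T_z^n=H_{g}T_{z^n}$. Concretely, $\|H_gz^nx\|=\|P_-(g z^n x)\|\to0$ for each polynomial $x$, since $g z^n x$ eventually has negligible negative part. Thus $\langle Ax,y\rangle=\langle AT_z^nx,T_z^ny\rangle\to0$ for polynomials $x,y$, so $A=0$.

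The main obstacle is only the careful verification of the rank-one identity with correct conjugations under $V$, and ensuring the final limit argument uses strong convergence of $H_{g_i}T_{z^n}\to0$, which holds because $H_{g}$ is compact whenever $g$ is continuous and approximable in general via $\|H_g T_{z^n}x\|\le \|P_-(g z^n x)\|\to 0$ for each fixed $x\in H^2$.
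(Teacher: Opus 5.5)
Your proof is correct, but it takes a somewhat different route from the paper's.

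\textbf{How the paper argues.} The paper proves (2)$\Leftrightarrow$(5) on the dual side. It inserts $I_{H^2}=T_zT_{\bar z}+1\otimes 1$ between $H_{f_i}$ and $H^*_{g_i}$ and then invokes Proposition~\ref{dual}. It gets (1)$\Rightarrow$(3) in one line from Douglas's symbol homomorphism on the Toeplitz algebra: $H^*_fH_g=T_{\bar f g}-T_{\bar f}T_g$ has symbol $0$, so a Toeplitz operator of the form $\sum_i H^*_{f_i}H_{g_i}$ must be $T_0$.

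\textbf{Where your route coincides.} Your identity $T_{\bar z}AT_z-A=-\sum_i (Vf_{i-})\otimes(Vg_{i-})$ is the $V$-conjugate of the paper's identity. You derive it from $\tilde T_{\bar z}\tilde T_z=I-\bar z\otimes\bar z$ rather than from $I_{H^2}=T_zT_{\bar z}+1\otimes 1$. So your (1)$\Leftrightarrow$(5) via Brown--Halmos is just the mirror image of the paper's (2)$\Leftrightarrow$(5).

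\textbf{Where it genuinely differs.} The real difference is how you show that (5) forces $A=0$. You do not use the symbol map. Instead, $T_{\bar z}AT_z=A$ gives $\langle Ax,y\rangle=\langle AT_{z^n}x,T_{z^n}y\rangle$. Moreover, $\|H_gT_{z^n}x\|^2=\sum_{k<-n}|\widehat{gx}(k)|^2\to 0$ for every $x\in H^2$, so no compactness of $H_g$ and no density of polynomials is needed. This makes the lemma self-contained: it is in effect a hands-on proof of the one property of the symbol map that is used, namely that $T_{\bar z}^n\big(\sum_i H^*_{f_i}H_{g_i}\big)T_z^n\to 0$. The cost is a few more lines. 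The paper's route is shorter but relies on the structure theory of the Toeplitz algebra.

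\textbf{Bookkeeping corrections.}
\begin{itemize}
\item Conjugation by $V$ preserves symbols. From \eqref{VV} and $V^2=I$ one gets $VH^*_fH_gV=H_fH^*_g$, not $H_{\bar f}H^*_{\bar g}$. This is exactly what makes (1)$\Leftrightarrow$(2) and (3)$\Leftrightarrow$(4) immediate for the same $f_i,g_i$.
\item No normalization ambiguity arises in your identity: $H^*_f\bar z=P_+(\bar z\bar f)=Vf_-$ exactly.
\item The injectivity step is cleanest stated as $Vp\otimes Vq=V(p\otimes q)V$. Hence $\sum_i Vf_{i-}\otimes Vg_{i-}=V\big(\sum_i f_{i-}\otimes g_{i-}\big)V$, which vanishes if and only if $\sum_i f_{i-}\otimes g_{i-}=0$.
\end{itemize}
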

\begin{proof}
From formulas \eqref{TVV} and \eqref{VV}, it can be seen that (1) and (2) are equivalent, and (3) and (4) are equivalent.
Using symbol mapping \cite[7.11]{douglas1998banach}, (1) implies (3). Moreover, (3) implies (1).
Recall that $I_{H^2}=T_{z}T_{\bar{z}}+1\otimes 1$, we have
\begin{align*}
\sum_{i=1}^{n}H_{f_i}H^{*}_{g_i}=&
\sum_{i=1}^{n}H_{f_i}(T_{z}T_{\bar{z}}+1\otimes 1)H^{*}_{g_i}\\
=&\tilde{T}_{z}\sum_{i=1}^{n}(H_{f_i}H^{*}_{g_i})\tilde{T}_{\bar{z}}+\sum_{i=1}^{n}H_{f_i}1\otimes H_{g_i}1,
\end{align*}
where the equality above comes from \eqref{HT}.
Proposition \ref{dual} shows that
$\sum_{i=1}^{n}H_{f_i}H^{*}_{g_i}$ is a dual Toeplitz operator
if and only if
$\sum_{i=1}^{n}H_{f_i}1\otimes H_{g_i}1=0.$
$H_{f_i}1=f_{i-}$ and $H_{g_i}1=g_{i-}$ give the desired result.
\end{proof}
\begin{theorem}\cite[Theorem 1.8]{peller2003hankel}\label{peller}
Let $R$ be a bounded operator from $H^2$ to $(H^2)^{\bot}$. Then $R$ is a Hankel operator if and only if it 
	satisfies the following commutation 
\begin{align}
\tilde{T}_{z}R=RT_{z}.
\end{align}
\end{theorem}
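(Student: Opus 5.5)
We must prove Theorem \ref{peller}: a bounded $R:H^2\to (H^2)^\perp$ is a Hankel operator $H_\phi$ for some $\phi\in L^\infty$ if and only if $\tilde{T}_{z}R=RT_{z}$.

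\textbf{Necessity.} If $R=H_\phi$, then \eqref{HT} applied with $f=z\in H^\infty$ gives
\[
\tilde{T}_{z}H_\phi=H_\phi T_{z}=H_{z\phi}.
\]

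\textbf{Sufficiency: identifying the candidate symbol.} Assume $\tilde{T}_{z}R=RT_{z}$. Put $\psi=R1\in \bar{z}\overline{H^{2}}$. For every polynomial $p$, iterating the commutation relation gives
\[
Rp=p(\tilde{T}_{z})\psi=P_{-}(p\psi),
\]
since $\tilde{T}_{z}^k\psi=P_{-}(z^k\psi)$, which follows from $P_-zP_-=P_-z$ on $\bar z\overline{H^2}$. So $R$ agrees with "$H_\psi$" on polynomials, but $\psi$ is only in $L^2$. We therefore need a bounded symbol.

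\textbf{Sufficiency: producing a bounded symbol (main step).} Transfer the problem to the classical Hankel-matrix setting. With respect to the bases $\{z^n\}_{n\ge0}$ and $\{\bar z^{m+1}\}_{m\ge0}$, we have
\[
\langle Rz^n,\bar z^{m+1}\rangle=\langle \psi,\bar z^{m+n+1}\rangle=\hat\psi(-(m+n+1)),
\]
so the matrix of $R$ depends only on $m+n$: it is a bounded Hankel matrix. Nehari's theorem then provides $\phi\in L^\infty$ with $\hat\phi(-k)=\hat\psi(-k)$ for all $k\ge1$ (indeed with $\|\phi\|_\infty=\|R\|$). Hence $\phi_-=\psi$, and $H_\phi$ and $R$ agree on polynomials. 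Both operators are bounded, so by density $R=H_\phi$.

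Alternatively, one can use the anti-unitary $V$ and \eqref{VV} to reduce to the more familiar form of Hankel operators on $H^2$ (composed with a flip), and invoke Nehari's theorem there.

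The only genuinely nontrivial ingredient is Nehari's theorem, which upgrades the $L^2$ symbol $\psi$ to an $L^\infty$ symbol. Everything else is bookkeeping. Since the result is cited from \cite{peller2003hankel}, it is acceptable to quote Nehari's theorem from the same source rather than reprove it.
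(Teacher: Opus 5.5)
Your proof is correct, and it is essentially the standard argument behind the cited result. The paper gives no proof of its own and simply quotes Peller; the argument there likewise reads the commutation relation as saying that the matrix of $R$ with respect to $\{z^{n}\}_{n\ge 0}$ and $\{\bar{z}^{m+1}\}_{m\ge 0}$ is a Hankel matrix, and then uses Nehari's theorem to produce an $L^{\infty}$ symbol. (A cosmetic point: the identity behind $\tilde{T}_{z}^{k}\psi=P_{-}(z^{k}\psi)$ is $P_{-}zP_{+}=0$, i.e.\ $P_{-}zP_{-}=P_{-}z$ on all of $L^{2}$, because it is applied to vectors such as $z^{k-1}\psi$ that need not lie in $\bar{z}\overline{H^{2}}$.)
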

\begin{lemma}\label{k2}
If $a_i,b_i,c_i,d_i\in L^{\infty}(i=1,2,\cdots,n),$ then 
$\sum\limits_{i=1}^{n}(H_{a_i}T_{b_i}+\tilde{T}_{c_i}H_{d_i})$ is a Hankel operator  if and only if 
\[\sum_{i=1}^{n} (a_{i-}\otimes Vb_{i-}-c_{i-}\otimes Vd_{i-})=0.\]
\end{lemma}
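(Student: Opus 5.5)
The plan is to use the commutation criterion of Theorem \ref{peller}. Put $R=\sum_{i=1}^{n}(H_{a_i}T_{b_i}+\tilde{T}_{c_i}H_{d_i})$; it is a bounded operator from $H^2$ to $(H^2)^{\perp}$. By Theorem \ref{peller}, $R$ is a Hankel operator if and only if $\tilde{T}_{z}R-RT_{z}=0$. So it suffices to compute this commutator explicitly for each summand and to show that it equals the rank-at-most-$n$ operator $-\sum_{i}(a_{i-}\otimes Vb_{i-}-c_{i-}\otimes Vd_{i-})$.

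\textbf{Step 1: intertwining identities.} From \eqref{eq:2m} with $(f,g)=(a,z)$ we get $H_{az}=H_aT_z$, since $H_z=0$. With $(f,g)=(z,a)$ we get $H_{za}=\tilde{T}_zH_a$. Hence $\tilde{T}_zH_a=H_aT_z$ for every $a\in L^\infty$.

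From \eqref{eq:1m} we get $T_bT_z=T_{bz}$ and $T_zT_b=T_{zb}-H^{*}_{\bar z}H_b$. From \eqref{eq:3m} we get $\tilde{T}_z\tilde{T}_c=\tilde{T}_{zc}$ and $\tilde{T}_c\tilde{T}_z=\tilde{T}_{cz}-H_cH^{*}_{\bar z}$.

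\textbf{Step 2: the commutators.} Using Step 1,
\begin{align*}
\tilde{T}_zH_aT_b-H_aT_bT_z&=H_a(T_zT_b-T_bT_z)=-H_aH^{*}_{\bar z}H_b,\\
\tilde{T}_z\tilde{T}_cH_d-\tilde{T}_cH_dT_z&=(\tilde{T}_z\tilde{T}_c-\tilde{T}_c\tilde{T}_z)H_d=H_cH^{*}_{\bar z}H_d.
\end{align*}

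\textbf{Step 3: the rank-one operators.} Next I identify $H^{*}_{\bar z}$. For $y\in\bar z\overline{H^2}$ we have $H^{*}_{\bar z}y=P_+(zy)=\langle y,\bar z\rangle 1$, so $H^{*}_{\bar z}=1\otimes\bar z$. Consequently
\[
H_aH^{*}_{\bar z}H_b=(H_a1)\otimes(H_b^{*}\bar z)=a_-\otimes P_+(\bar z\bar b).
\]
Writing $\bar z\bar b=\bar z\overline{b_+}+\bar z\overline{b_-}$, the first term lies in $\bar z\overline{H^2}$ and the second equals $Vb_-\in H^2$. Therefore $P_+(\bar z\bar b)=Vb_-$. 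Similarly $H_cH^{*}_{\bar z}H_d=c_-\otimes Vd_-$.

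\textbf{Conclusion.} Summing over $i$ gives
\[
\tilde{T}_zR-RT_z=-\sum_{i=1}^n\bigl(a_{i-}\otimes Vb_{i-}-c_{i-}\otimes Vd_{i-}\bigr).
\]
By Theorem \ref{peller}, $R$ is Hankel exactly when this sum vanishes, which is the claimed criterion.

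The only delicate point is the bookkeeping in Step 3. One must get $H_b^{*}\bar z=Vb_-$, and not $V\bar b_-$ or a shifted version, and one must keep track of the signs coming from \eqref{eq:1m} and \eqref{eq:3m}. The sign does not affect the final "$=0$" condition, but the relative sign between the $a,b$ terms and the $c,d$ terms does.
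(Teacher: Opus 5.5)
Your proof is correct and follows the paper's route: both apply Theorem \ref{peller} and show, summand by summand, that $RT_z-\tilde{T}_zR=\sum_{i}(a_{i-}\otimes Vb_{i-}-c_{i-}\otimes Vd_{i-})$, with the same relative sign between the $(a,b)$ and $(c,d)$ terms. The only difference is bookkeeping, and it is cosmetic: the paper inserts $I_{H^2}=T_zT_{\bar z}+1\otimes 1$ (and its $V$-conjugate on $(H^2)^{\perp}$), while you read off $[T_z,T_b]=-H^{*}_{\bar z}H_b$ and $[\tilde{T}_z,\tilde{T}_c]=H_cH^{*}_{\bar z}$ from \eqref{eq:1m} and \eqref{eq:3m} and then use $H^{*}_{\bar z}=1\otimes\bar z$; since $1\otimes 1=H^{*}_{\bar z}H_{\bar z}$, this is the same identity. (A minor slip that does not affect the argument: the defect has rank at most $2n$, not $n$.)
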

\begin{proof}
Apply $I_{H^2}=T_{z}T_{\bar{z}}+1\otimes 1$, we have
\begin{align*}
H_{a_i}T_{b_i}T_{z}&=H_{a_i}(T_{z}T_{\bar{z}}+1\otimes 1)T_{b_i}T_{z}\\
&=H_{a_i}T_{z}T_{\bar{z}}T_{b_i}T_{z}+H_{a_i}(1\otimes 1)T_{b_i}T_{z}\\
&=H_{a_i}T_{z}T_{b_i}+H_{a_i}(1\otimes 1)T_{zb_i}\\
&=\tilde{T}_{z}H_{a_i}T_{b_i}+a_{i-}\otimes Vb_{i-},
\end{align*}
where the third equality comes form \eqref{TT}, the last equality follows from \eqref{HT}, thus
\begin{align}\label{a}
H_{a_i}T_{b_i}T_{z}-\tilde{T}_{z}H_{a_i}T_{b_i}=a_{i-}\otimes Vb_{i-}.
\end{align}

Applying operator V on both sides of the equation $I_{H^2}=T_{z}T_{\bar{z}}+1\otimes 1$ yields
$I_{(H^2)^{\bot}}=\tilde{T}_{\bar{z}}\tilde{T}_{z}+\bar{z}\otimes\bar{z}.$
\begin{align*}
\tilde{T}_{z}\tilde{T}_{c_i}H_{d_i}
&=\tilde{T}_{z}\tilde{T}_{c_i}(\tilde{T}_{\bar{z}}\tilde{T}_{z}+\bar{z}\otimes\bar{z})H_{d_i}\\
&=\tilde{T}_{z}\tilde{T}_{c_i}\tilde{T}_{\bar{z}}\tilde{T}_{z}H_{d_i}
+\tilde{T}_{z}\tilde{T}_{c_i}(\bar{z}\otimes\bar{z})H_{d_i}\\
&=\tilde{T}_{c_i}\tilde{T}_{z}H_{d_i}+\tilde{T}_{zc_i}(\bar{z}\otimes\bar{z})H_{d_i}\\
&=\tilde{T}_{c_i}H_{d_i}T_{z}+c_{i-}\otimes Vd_{i-},
\end{align*}
where the third equality comes form  \eqref{DD}, the last equality follows from \eqref{HT}, thus
\begin{align}\label{b}
\tilde{T}_{z}\tilde{T}_{c_i}H_{d_i}-\tilde{T}_{c_i}H_{d_i}T_{z}=c_{i-}\otimes Vd_{i-}.
\end{align}
Combining \eqref{a} and \eqref{b} yields
\begin{align*}
\sum_{i=1}^{n}(H_{a_i}T_{b_i}+\tilde{T}_{c_i}H_{d_i})T_{z}
-&\tilde{T}_{z}\sum_{i=1}^{n}(H_{a_i}T_{b_i}+\tilde{T}_{c_i}H_{d_i})\\
=&\sum_{i=1}^{n}(a_{i-}\otimes Vb_{i-}-c_{i-}\otimes Vd_{i-}).
\end{align*}
Applying Theorem \ref{peller} to the equation above shows that the conclusion holds.
\end{proof}

\begin{lemma}\label{key}
If vectors $p_i,q_i,r_i,s_i(i=1,2,\cdots,n)$ are in a Hilbert space $\mathcal{H},$ then 
\begin{align*}
\sum_{i=1}^{n}\brackbinom{p_i}{q_i}\otimes \brackbinom{r_i}{s_i}=0
\end{align*}
if and only if 
\begin{align*}
\sum_{i=1}^{n} p_i \otimes r_i=0;\ \
\sum_{i=1}^{n} p_i\otimes s_i=0;\ \
\sum_{i=1}^{n} q_i\otimes r_i=0;\ \
\sum_{i=1}^{n} q_i\otimes s_i=0.
\end{align*}
\end{lemma}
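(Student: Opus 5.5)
The plan is to regard $\brackbinom{p}{q}$ as an element of the direct sum $\mathcal{H}\oplus\mathcal{H}$ and to compute the rank one operator $\brackbinom{p}{q}\otimes\brackbinom{r}{s}$ as a $2\times 2$ operator matrix on $\mathcal{H}\oplus\mathcal{H}$. Everything then reduces to identifying the four entries and noting that an operator matrix vanishes exactly when all its entries vanish.

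First, fix $x=\brackbinom{x_1}{x_2}\in\mathcal{H}\oplus\mathcal{H}$. By the definition of the rank one operator and of the inner product on the direct sum,
\begin{align*}
\left(\brackbinom{p}{q}\otimes \brackbinom{r}{s}\right)x
=\left(\langle x_1,r\rangle+\langle x_2,s\rangle\right)\brackbinom{p}{q}
=\brackbinom{(p\otimes r)x_1+(p\otimes s)x_2}{(q\otimes r)x_1+(q\otimes s)x_2}.
\end{align*}
This gives the matrix identity
\begin{align*}
\brackbinom{p}{q}\otimes \brackbinom{r}{s}=\begin{bmatrix} p\otimes r & p\otimes s\\ q\otimes r & q\otimes s\end{bmatrix}.
\end{align*}
Summing over $i$ and using linearity of each entry in the pair of vectors, we get
\begin{align*}
\sum_{i=1}^{n}\brackbinom{p_i}{q_i}\otimes \brackbinom{r_i}{s_i}
=\begin{bmatrix} \sum_i p_i\otimes r_i & \sum_i p_i\otimes s_i\\ \sum_i q_i\otimes r_i & \sum_i q_i\otimes s_i\end{bmatrix}.
\end{align*}

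To finish, we observe that an operator matrix on $\mathcal{H}\oplus\mathcal{H}$ is zero if and only if each entry is zero. Testing on vectors $\brackbinom{x_1}{0}$ and $\brackbinom{0}{x_2}$ isolates each column, and reading off the components then isolates each entry. This yields both directions at once. The only step that needs care, and it is a minor one, is the convention that $\otimes$ is conjugate linear in the second slot. This is why the entries come out as $p\otimes r$ and not with conjugated scalars, and it is consistent with $(p\otimes q)(x)=\langle x,q\rangle p$. No real obstacle is expected.
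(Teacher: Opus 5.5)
Your proposal is correct and is essentially the paper's argument. The paper computes $\bigl\langle (\brackbinom{p_1}{q_1}\otimes\brackbinom{r_1}{s_1})\brackbinom{x}{y},\brackbinom{a}{b}\bigr\rangle$ as the sum of four pairings and sets, e.g., $y=b=0$ to isolate each entry, which is the same as your reading off the four entries of the $2\times 2$ operator matrix; your version additionally treats general $n$ directly by entrywise linearity, whereas the paper only does $n=1$ and says the other cases are similar.
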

\begin{proof}
We only need to prove the case when $n=1$, and the other cases are similar.
For any vectors $x,y,a$ and $b,$ a direct computation yields.
\begin{align}
&\bigg\langle \brackbinom{p_1}{q_1}\otimes \brackbinom{r_1}{s_1}\brackbinom{x}{y},\brackbinom{a}{b}\bigg\rangle \nonumber\\
=&\langle (p_1\otimes r_1)x,a\rangle +\langle (p_1\otimes s_1)y,a\rangle+\langle (q_1\otimes r_1)x,b\rangle 
+\langle (q_1\otimes s_1)y,b\rangle. \label{keyi}
\end{align}
If $\brackbinom{p_1}{q_1}\otimes \brackbinom{r_1}{s_1}=0,$ let $y=b=0,$ then $p_1\otimes r_1=0.$ Similarly, we can obtain
$p_1\otimes s_1=0,q_1\otimes r_1=0$ and $q_1\otimes s_1=0.$ The necessity follows from the equality \eqref{keyi}.
\end{proof}
\section{Semi-commutativity of GSIOs}\label{4}
\begin{theorem}\label{main1}
If ${H_{1}=\begin{bmatrix}
\begin{smallmatrix}
   f_{1}& u_{1}\\
   g_{1} & v_{1} \end{smallmatrix}
\end{bmatrix}},
{H_{2}=
\begin{bmatrix}
\begin{smallmatrix}
   f_{2}& u_{2}\\
   g_{2} & v_{2}
\end{smallmatrix}
\end{bmatrix}}
\in L_{2\times2}^{\infty},$
the following statements are equivalent.
\begin{enumerate}
	\item The product $R_{H_{1}}R_{H_{2}}$ is a GSIO; 
	\item The following operator equality holds:
		\begin{align}\label{rr}
			\brackbinom{V(\bar{f_{1}})_{-}}{g_{1-}}\otimes \brackbinom{Vf_{2-}}{(\bar{u}_{2})_{-}}
			=\brackbinom{V(\bar{u}_{1})_{-}}{v_{1-}}\otimes \brackbinom{Vg_{2-}}{(\bar{v}_{2})_{-}}.
		\end{align}
	\item One of the following conditions holds:
		   \begin{enumerate}[label=(\arabic{enumi}.\arabic*)]
			\item either 
	$\brackbinom{\bar{f}_{1}}{g_{1}}\in H_{2}^{\infty}$ or $\brackbinom{f_{2}}{\bar{u}_{2}}\in H_{2}^{\infty},$ and either
	  $\brackbinom{\bar{u}_{1}}{v_{1}}\in H_{2}^{\infty}$ or $\brackbinom{g_{2}}{\bar{v}_{2}}\in H_{2}^{\infty};$
			\item  If (3.1) does not happen, there exists a nonzero constant $\lambda$ such that
			$\bar{f}_{1}-\bar{\lambda}\bar{u}_{1},g_{1}-\lambda v_{1},g_{2}-\lambda f_{2},
			\bar{v}_{2}-\bar{\lambda}\bar{u}_{2}\in H^{\infty}.$
		   \end{enumerate}
	\end{enumerate}
\end{theorem}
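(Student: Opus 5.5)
Write $R_{H_1}R_{H_2}$ as a product of $2\times2$ operator matrices with respect to $L^2=H^2\oplus\bar z\overline{H^2}$, using \eqref{M}. The four entries are
\begin{align*}
(1,1)&:\ T_{f_1}T_{f_2}+H^*_{\bar u_1}H_{g_2},\\
(2,1)&:\ H_{g_1}T_{f_2}+\tilde T_{v_1}H_{g_2},\\
(1,2)&:\ T_{f_1}H^*_{\bar u_2}+H^*_{\bar u_1}\tilde T_{v_2},\\
(2,2)&:\ H_{g_1}H^*_{\bar u_2}+\tilde T_{v_1}\tilde T_{v_2}.
\end{align*}
The product is a GSIO exactly when each entry has the correct type: the $(1,1)$ entry Toeplitz, the $(2,1)$ entry Hankel, the $(1,2)$ entry an adjoint Hankel, and the $(2,2)$ entry dual Toeplitz. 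The plan is to turn each of these four conditions into a single rank-one identity, and then use Lemma \ref{key} to combine them into \eqref{rr}.

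For the $(1,1)$ entry, \eqref{eq:1m} gives $T_{f_1}T_{f_2}=T_{f_1f_2}-H^*_{\bar f_1}H_{f_2}$. So the entry is Toeplitz iff $H^*_{\bar u_1}H_{g_2}-H^*_{\bar f_1}H_{f_2}$ is Toeplitz. By Lemma \ref{k1} this holds iff $(\bar f_1)_-\otimes f_{2-}=(\bar u_1)_-\otimes g_{2-}$.

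The $(2,2)$ entry is treated the same way using \eqref{eq:3m} and Lemma \ref{k1}. It gives $g_{1-}\otimes(\bar u_2)_-=v_{1-}\otimes(\bar v_2)_-$.

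For the $(2,1)$ entry, Lemma \ref{k2} applies directly. It is Hankel iff $g_{1-}\otimes Vf_{2-}=v_{1-}\otimes Vg_{2-}$.

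The $(1,2)$ entry is handled by taking adjoints, or by conjugating with $V$ via \eqref{VV} and \eqref{TVV}, which reduces it to the Hankel case. This gives a relation $V(\bar f_1)_-\otimes(\bar u_2)_-=V(\bar u_1)_-\otimes(\bar v_2)_-$.

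Applying the anti-unitary $V$ to both factors of the $(1,1)$ relation, it is equivalent to $V(\bar f_1)_-\otimes Vf_{2-}=V(\bar u_1)_-\otimes Vg_{2-}$. The four relations are now exactly the four block components appearing in Lemma \ref{key}, with $n=2$ and a minus sign, so (1)$\Leftrightarrow$(2). The main bookkeeping risk is getting each $V$ and each conjugate in the right place. I would check the $(1,2)$ entry carefully by writing it as $(\,\cdot\,)^*$ of the $(2,1)$-type expression for the adjoint symbols $\bar f,\bar g,\bar u,\bar v$.

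For (2)$\Leftrightarrow$(3), write \eqref{rr} as $p\otimes q=r\otimes s$ with $p=\brackbinom{V(\bar f_1)_-}{g_{1-}}$, $q=\brackbinom{Vf_{2-}}{(\bar u_2)_-}$, $r=\brackbinom{V(\bar u_1)_-}{v_{1-}}$, $s=\brackbinom{Vg_{2-}}{(\bar v_2)_-}$. Two rank-at-most-one operators coincide iff either both vanish, or all four vectors are nonzero and $r=\mu p$, $q=\bar\mu s$ for some nonzero scalar $\mu$.

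A vector of the form $\brackbinom{Va_-}{b_-}$ vanishes iff $a,b\in H^\infty$. So "both sides vanish" is precisely (3.1): $p=0$ or $q=0$, and $r=0$ or $s=0$.

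Otherwise all four vectors are nonzero and proportional as above. Set $\lambda=1/\bar\mu$ (adjusting for the conjugate-linearity of $V$). Then $p=\bar\mu^{-1}r$ unpacks to $V(\bar f_1-\bar\lambda\bar u_1)_-=0$ and $(g_1-\lambda v_1)_-=0$. Similarly, $s=\mu^{-1}q$ unpacks to $(g_2-\lambda f_2)_-=0$ and $(\bar v_2-\bar\lambda\bar u_2)_-=0$. These are the conditions in (3.2).

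Conversely, (3.2) gives $p=c\,r$ and $s=c'q$ with constants satisfying $c\bar{c'}=1$, hence $p\otimes q=r\otimes s$. The step I expect to need the most care is the constant matching: $V$ is conjugate-linear, so $V(\bar\lambda a)=\lambda Va$. The choice of $\lambda$ versus $\bar\lambda$ must make the two proportionality relations consistent, and I would verify this by direct substitution.
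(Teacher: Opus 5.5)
Your proposal is correct and follows the paper's proof essentially step for step: the entrywise reduction of $R_{H_1}R_{H_2}$ via \eqref{eq:1m}, \eqref{eq:3m}, Lemma \ref{k1} and Lemma \ref{k2} (with the $(1,2)$ entry handled through its adjoint), the combination of the four relations by Lemma \ref{key}, and then the split into the zero and rank-one cases. In the constant matching you anticipated, two small fixes are needed. From $r=\mu p$, $q=\bar\mu s$ one gets $\lambda=1/\mu$, so that $p=\lambda r$ and $s=\bar\lambda q$. In the converse, $p=cr$, $s=c'q$ requires $c=\bar{c'}$, not $c\bar{c'}=1$.
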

\begin{proof}
Since
\begin{align}\label{RHRH}
R_{H_1}R_{H_2}
=\begin{bmatrix}T_{f_{1}}T_{f_{2}}+H^{*}_{\bar{u}_{1}}H_{g_{2}}& T_{f_{1}}H^{*}_{\bar{u}_{2}}+H^{*}_{\bar{u}_{1}}\tilde{T}_{v_{2}}\\
	H_{g_{1}}T_{f_{2}}+\tilde{T}_{v_{1}}H_{g_{2}} &  H_{g_{1}}H^{*}_{\bar{u}_{2}}+\tilde{T}_{v_{1}}\tilde{T}_{v_{2}}
\end{bmatrix},
\end{align}
we have $R_{H_1}R_{H_2}$ is a GSIO if and only if all the following four statements hold.
\begin{itemize}
	\item $T_{f_{1}}T_{f_{2}}+H^{*}_{\bar{u}_{1}}H_{g_{2}}$ is a Toeplitz operator;
	\item $(T_{f_{1}}H^{*}_{\bar{u}_{2}}+H^{*}_{\bar{u}_{1}}\tilde{T}_{v_{2}})^{*}$ is a Hankel operator;
	\item $H_{g_{1}}T_{f_{2}}+\tilde{T}_{v_{1}}H_{g_{2}}$ is a Hankel operator; 
	\item $H_{g_{1}}H^{*}_{\bar{u}_{2}}+\tilde{T}_{v_{1}}\tilde{T}_{v_{2}}$ is a dual Toeplitz operator.
\end{itemize}

By symbol mapping \cite[7.11]{douglas1998banach}, $T_{f_{1}}T_{f_{2}}+H^{*}_{\bar{u}_{1}}H_{g_{2}}$ is a Toeplitz operator, then it equals
$T_{f_1 f_2}.$ Using \eqref{TT}, we have $T_{f_1 f_2}-T_{f_{1}}T_{f_{2}}=H^{*}_{\bar{f}_1}H_{f_2}.$
So, $T_{f_{1}}T_{f_{2}}+H^{*}_{\bar{u}_{1}}H_{g_{2}}$ is a Toeplitz operator if and only if 
$H^{*}_{\bar{f_{1}}}H_{f_{2}}=H^{*}_{\bar{u}_{1}}H_{g_{2}}.$
Similarly, $H_{g_{1}}H^{*}_{\bar{u}_{2}}+\tilde{T}_{v_{1}}\tilde{T}_{v_{2}}$ is a dual Toeplitz operator if and only if 
$H_{g_{1}}H^{*}_{\bar{u}_{2}}=H_{v_{1}}H^{*}_{\bar{v}_{2}}.$
 By Lemma \ref{k1} and  Lemma \ref{k2}, we have 
 $R_{H_1}R_{H_2}$ is a GSIO if and only if all the following four statements hold.
\begin{align*}
V(\bar{f_{1}})_{-}\otimes Vf_{2-}-V(\bar{u}_{1})_{-}\otimes Vg_{2-}&=0;\\
V(\bar{f}_{1})_{-}\otimes(\bar{u}_{2})_{-} -V(\bar{u}_{1})_{-}\otimes (\bar{v}_{2})_{-}&=0;\\
g_{1-}\otimes Vf_{2-}-v_{1-}\otimes Vg_{2-} &=0;\\
g_{1-}\otimes(\bar{u}_{2})_{-}-v_{1-}\otimes(\bar{v}_{2})_{-}&=0.
\end{align*}
Lemma \ref{key} implies that $R_{H_1}R_{H_2}$ is a GSIO if and only if \eqref{rr} holds.
When both sides of the equation \eqref{rr} are equal to zero, we can obtain conclusion (3.1); 
when both sides are rank-one operators, we can obtain conclusion (3.2).
\end{proof}

Next, we will calculate the matrix representations of $R_{H_1}R_{H_2}$ in the five cases of Theorem \ref{main1}(3).
Note that, by symbol mapping \cite[7.11]{douglas1998banach}, 
the entries at the upper-left and lower-right corners of the matrix representation of $R_{H_1}R_{H_2}$
are $T_{f_{1}f_{2}}$ and $\tilde{T}_{v_{1} v_{2}},$ respectively.
The calculation for the first four cases only requires substituting the conditions into equation 
\eqref{RHRH}, while utilizing the fact that $H_{h}=0$ if and only if $h\in H^{\infty}$, together with \eqref{HT}.
\begin{enumerate}[label=\textbf{Case \Alph*:}, leftmargin=2cm]
	\item If 
	$\brackbinom{\bar{f}_{1}}{g_{1}},\brackbinom{\bar{u}_{1}}{v_{1}}\in H_{2}^{\infty},$ 
	then
\begin{align}\label{1.1.}
R_{H_1}R_{H_2}
&=
\begin{bmatrix}T_{f_{1}f_{2}}& H^{*}_{\bar{f}_1\bar{u}_{2}}\\
	H_{v_{1}g_{2}} &  \tilde{T}_{v_{1} v_{2}}
 \end{bmatrix}.
\end{align}
    \item If 
$\brackbinom{\bar{f}_{1}}{g_{1}},\brackbinom{g_{2}}{\bar{v}_{2}}\in H_{2}^{\infty},$	then
\begin{align}\label{1.2.}
R_{H_1}R_{H_2}=\begin{bmatrix}T_{f_{1}f_{2}}& H^{*}_{\bar{f}_1 \bar{u}_2+\bar{u}_1 \bar{v}_2}\\
	0 &  \tilde{T}_{v_{1} v_{2}}
\end{bmatrix}.
\end{align}
    \item If 
 $\brackbinom{f_{2}}{\bar{u}_{2}},\brackbinom{\bar{u}_{1}}{v_{1}}\in H_{2}^{\infty},$ then
\begin{align}\label{2.1.}
	R_{H_1}R_{H_2}=\begin{bmatrix}T_{f_{1}f_{2}}& 0\\
		H_{g_1 f_2 +v_1 g_2} &  \tilde{T}_{v_{1} v_{2}}
	\end{bmatrix}.
\end{align}
    \item If 
 $\brackbinom{f_{2}}{\bar{u}_{2}},\brackbinom{g_{2}}{\bar{v}_{2}}\in H_{2}^{\infty},$	then
\begin{align}\label{2.2.}
	R_{H_1}R_{H_2}=\begin{bmatrix}T_{f_{1}f_{2}}& H^{*}_{\bar{u}_1 \bar{v}_2 }\\
		H_{g_1 f_2} &  \tilde{T}_{v_{1} v_{2}}
	\end{bmatrix}.
\end{align}
  \item If the rank on both sides of \eqref{rr} is 1, then there exists a non-zero constant $\lambda$  such that 
$\bar{f}_{1}-\bar{\lambda}\bar{u}_{1},g_{1}-\lambda v_{1},g_{2}-\lambda f_{2},
\bar{v}_{2}-\bar{\lambda}\bar{u}_{2}\in H^{\infty}.$
Next, we will calculate  the secondary diagonal elements in the operator matrix \eqref{RHRH} separately.
\begin{align*}
T_{f_{1}}H^{*}_{\bar{u}_{2}}+H^{*}_{\bar{u}_{1}}\tilde{T}_{v_{2}}
&=T_{f_{1}}H^{*}_{\bar{v}_{2}/\bar{\lambda}}+H^{*}_{\bar{f}_{1}/\bar{\lambda}}\tilde{T}_{v_{2}}
=H^{*}_{(1/\bar{\lambda})\bar{f}_1 \bar{v}_2}.
\end{align*}
The second equality follows from  \eqref{eq:4m}. \eqref{eq:2m} implies that
\begin{align*}
	H_{g_{1}}T_{f_{2}}+\tilde{T}_{v_{1}}H_{g_{2}}=H_{\lambda v_{1}}T_{f_{2}}+\tilde{T}_{v_{1}}H_{\lambda f_{2}}
=H_{\lambda v_{1}f_2}.
\end{align*}
Hence
\begin{align}\label{casec}
R_{H_{1}}R_{H_{2}}&=
	\begin{bmatrix}T_{f_1 f_2}& H^{*}_{(1/\bar{\lambda}) \bar{f}_1 \bar{v}_2}\\
		H_{\lambda v_{1}f_2} &   \tilde{T}_{v_1 v_2} \end{bmatrix}.
\end{align}
\end{enumerate}
\begin{example}
If $f_1=u_1,g_1=v_1,f_2=g_2,u_2=v_2,$ then $\eqref{rr}$ holds. In this case, $R_{H_1}R_{H_2}$ is a GSIO.
In particular, since
$S_{f,g}
=R_{\begin{bmatrix}
\begin{smallmatrix}
 f & g \\
f & g
\end{smallmatrix}
\end{bmatrix}}$
and 
$S^{*}_{f,g}
=R_{\begin{bmatrix}
\begin{smallmatrix}
 \bar{f} & \bar{f} \\
\bar{g} & \bar{g}
\end{smallmatrix}
\end{bmatrix}},$ $S^{*}_{f,g}S_{f,g}$ satisfies
Case E with $\lambda=1,$ and \eqref{casec} implies
\begin{align}\label{StarS}
S^{*}_{f,g}S_{f,g}
=R_{\begin{bmatrix}
	\begin{smallmatrix}
	   |f|^2 & \bar{f}g\\
	   f\bar{g} & |g|^{2}
	\end{smallmatrix}
	\end{bmatrix}}
\end{align}
is a GSIO. This simple observation helps us characterize 
quasinormal singular integral operators (see Theorem \ref{quasinormal}).
\end{example}

\begin{proposition}
Let
${H=\begin{bmatrix}
	\begin{smallmatrix}
	   f& u\\
	   g & v \end{smallmatrix}
	\end{bmatrix}}
	\in L_{2\times2}^{\infty}.$ Then $R_{H}$ is an isometry if and only if
$H$ satisfies the following two conditions.
\begin{enumerate}
	\item $|f|=|v|=1;$
	\item One of the following conditions holds.
	\begin{enumerate}
		\item $f,g,\bar{u},\bar{v}\in H^{\infty};$ 
		\item There exists a unimodular constant $\lambda,$ such that $f-\lambda g,\bar{u}-\bar{\lambda}\bar{v},f\bar{v}\in H^{\infty}$.
	\end{enumerate}
\end{enumerate}
\end{proposition}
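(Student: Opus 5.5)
The plan is to reduce the isometry condition $R_H^{*}R_H=I$ to Theorem \ref{main1}. Since $R_H^{*}=R_{\left[\begin{smallmatrix}\bar f&\bar g\\ \bar u&\bar v\end{smallmatrix}\right]}$ and $I=R_{\left[\begin{smallmatrix}1&0\\0&1\end{smallmatrix}\right]}$ is itself a GSIO, $R_H$ being an isometry forces $R_{H_1}R_{H_2}$ to be a GSIO. Here
\[
H_1=\begin{bmatrix}\bar f&\bar g\\ \bar u&\bar v\end{bmatrix},\qquad H_2=H,
\]
so that
\[
f_1=\bar f,\quad u_1=\bar g,\quad g_1=\bar u,\quad v_1=\bar v,\qquad f_2=f,\quad u_2=u,\quad g_2=g,\quad v_2=v .
\]
Hence one of the alternatives (3.1) or (3.2) of Theorem \ref{main1} must hold. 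In each alternative we read off the operator matrix of $R_H^{*}R_H$ from Cases A--E and compare it with $\left[\begin{smallmatrix}I&0\\0&I\end{smallmatrix}\right]$. The converse direction then runs through the same computations backwards.

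\emph{Case (3.1).} Substituting, the first pair of vectors in (3.1) becomes $\brackbinom{f}{\bar u}$ in both options. The second pair becomes $\brackbinom{g}{\bar v}$ in both options. So (3.1) says exactly that $f,\bar u,g,\bar v\in H^{\infty}$, which is condition (a). In this situation Case A gives
\[
R_H^{*}R_H=\begin{bmatrix}T_{|f|^2}&H^{*}_{f\bar u}\\ H_{\bar v g}&\tilde T_{|v|^2}\end{bmatrix}.
\]
The off-diagonal Hankel operators vanish because $f\bar u$ and $\bar v g$ lie in $H^{\infty}$. Moreover $T_{|f|^2}=I$ if and only if $|f|=1$, and $\tilde T_{|v|^2}=I$ if and only if $|v|=1$, using injectivity of the symbol map, or \eqref{TVV} for the dual part. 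This yields (1) together with (a).

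\emph{Case (3.2).} Substituting, (3.2) gives a nonzero $\lambda$ with
\[
f-\bar\lambda g,\quad \bar u-\lambda\bar v,\quad g-\lambda f,\quad \bar v-\bar\lambda\bar u\in H^{\infty}.
\]
Formula \eqref{casec} then gives
\[
R_H^{*}R_H=\begin{bmatrix}T_{|f|^2}&H^{*}_{(1/\bar\lambda)f\bar v}\\ H_{\lambda\bar v f}&\tilde T_{|v|^2}\end{bmatrix}.
\]
So $R_H^{*}R_H=I$ if and only if $|f|=|v|=1$ and $f\bar v\in H^{\infty}$.

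The remaining point, and the only slightly delicate one, is to show that $\lambda$ may be taken unimodular. From $g-\lambda f\in H^{\infty}$ and $f-\bar\lambda g\in H^{\infty}$ we obtain $(1-|\lambda|^{2})f\in H^{\infty}$, and symmetrically $(1-|\lambda|^{2})g,\ (1-|\lambda|^{2})\bar u,\ (1-|\lambda|^{2})\bar v\in H^{\infty}$. If $|\lambda|\neq1$, then $f,g,\bar u,\bar v\in H^{\infty}$ and we are back in (a). If $|\lambda|=1$, the four membership conditions collapse to the two conditions $f-\bar\lambda g,\ \bar u-\lambda\bar v\in H^{\infty}$. Renaming $\bar\lambda$ as $\lambda$ gives exactly (b).

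\emph{Converse.} Conversely, (a) or (b), together with (1), puts us in Case A or Case E respectively, and the displayed matrices then equal the identity. I expect no serious obstacle. The main care needed is the bookkeeping of the substitution into the eight-function conditions of Theorem \ref{main1}, and the reduction of $\lambda$ to a unimodular constant.
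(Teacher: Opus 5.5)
Your proposal is correct and follows the paper's proof: you apply Theorem \ref{main1} to $R_H^{*}R_H=I$, split into the zero case (Case A) and the rank-one case (Case E), and read off $|f|=|v|=1$ and $f\bar v\in H^{\infty}$ from the resulting operator matrices. The one difference is how unimodularity of $\lambda$ is obtained. The paper uses form (2), which here reads $x\otimes x=y\otimes y$ with $x=\brackbinom{Vf_{-}}{(\bar u)_{-}}$ and $y=\brackbinom{Vg_{-}}{(\bar v)_{-}}$, so $|\lambda|=1$ is immediate; you use form (3) and recover $|\lambda|=1$ through your (valid) $(1-|\lambda|^{2})$ argument.
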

\begin{proof}
Suppose that $R_{H}$ is an isometry, we have 
$
R_{{\begin{bmatrix}
	\begin{smallmatrix}
	   \bar{f}& \bar{g}\\
	   \bar{u} & \bar{v}
	\end{smallmatrix}
	\end{bmatrix}}}
R_{{\begin{bmatrix}
	\begin{smallmatrix}
	   f& u\\
	   g & v
	\end{smallmatrix}
	\end{bmatrix}}}=I_{L^2}.
$
By Theorem \ref{main1}, we have 
	$\brackbinom{Vf_{-}}{(\bar{u})_{-}}\otimes \brackbinom{Vf_{-}}{(\bar{u})_{-}}
	=\brackbinom{Vg_{-}}{(\bar{v})_{-}}\otimes \brackbinom{Vg_{-}}{(\bar{v})_{-}}.$
\eqref{1.1.},\eqref{1.2.},\eqref{2.1.},\eqref{2.2.} and \eqref{casec} implies $|f|=|v|=1.$
There are exactly two  cases: either $\brackbinom{Vf_{-}}{(\bar{u})_{-}}=\brackbinom{Vg_{-}}{(\bar{v})_{-}}=0$ 
or $\brackbinom{Vf_{-}}{(\bar{u})_{-}}=\lambda \brackbinom{Vg_{-}}{(\bar{v})_{-}}$ for some  unimodular constant $\lambda.$

In the first case, $f,g,\bar{u},\bar{v}\in H^{\infty}.$ Conversely, if $|f|=|v|=1,$ and $f,g,\bar{u},\bar{v}\in H^{\infty},$ 
then $R_{H}=T_{f}\oplus \tilde{T}_{v}.$ Using \eqref{TT} and \eqref{DD}, we have  $R_H$ is isometric.

In the second case, using \eqref{casec}, we have $R^{*}_{H}R_{H}=\begin{bmatrix}T_{|f|^2}& H^{*}_{{\lambda}f\bar{v}}\\
	H_{\lambda f\bar{v}} &   \tilde{T}_{|v|^2} \end{bmatrix},$ and so, $f\bar{v}\in H^{\infty}.$
The above reasoning shows that $R_H$ is isometric if $H$ satisfies conditions (1) and (2)(b).
\end{proof}

\section{commuting GSIOs}\label{commute}
\begin{theorem}\label{main2}
	If ${H_{1}=\begin{bmatrix}
	\begin{smallmatrix}
	   f_{1}& u_{1}\\
	   g_{1} & v_{1} \end{smallmatrix}
	\end{bmatrix}},{H_{2}=\begin{bmatrix}
	\begin{smallmatrix}
	   f_{2}& u_{2}\\
	   g_{2} & v_{2} \end{smallmatrix}
	\end{bmatrix}}$  in $L_{2\times2}^{\infty},$
 then $R_{H_{1}}R_{H_{2}}=R_{H_{2}}R_{H_{1}}$ if and only if the following three operator equations hold.
\begin{align}
	&H_{\bar{u}_{2}}T_{\bar{f}_{1}}+\tilde{T}_{\bar{v}_{2}}H_{\bar{u}_{1}}
	-H_{\bar{u}_{1}}T_{\bar{f}_{2}}-\tilde{T}_{\bar{v}_{1}}H_{\bar{u}_{2}}=0;\label{1.2}\\
	&H_{g_{1}}T_{f_{2}}+\tilde{T}_{v_{1}}H_{g_{2}}-H_{g_{2}}T_{f_{1}}-\tilde{T}_{v_{2}}H_{g_{1}}=0;\label{2.1}\\
	&\brackbinom{V(\bar{f_{1}})_{-}}{g_{1-}}\otimes \brackbinom{Vf_{2-}}{(\bar{u}_{2})_{-}}
	-\brackbinom{V(\bar{u}_{1})_{-}}{v_{1-}}\otimes \brackbinom{Vg_{2-}}{(\bar{v}_{2})_{-}} \nonumber \\
	=&\brackbinom{V(\bar{f_{2}})_{-}}{g_{2-}}\otimes \brackbinom{Vf_{1-}}{(\bar{u}_{1})_{-}}
	-\brackbinom{V(\bar{u}_{2})_{-}}{v_{2-}}\otimes \brackbinom{Vg_{1-}}{(\bar{v}_{1})_{-}}.\label{WW}
	\end{align}
\end{theorem}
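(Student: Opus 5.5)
We propose to compare the $2\times 2$ operator matrices of $R_{H_1}R_{H_2}$ and $R_{H_2}R_{H_1}$ entry by entry. We use \eqref{RHRH} for $R_{H_1}R_{H_2}$ and the same formula with indices swapped for $R_{H_2}R_{H_1}$. Commutativity then amounts to four operator identities, one for each corner. The $(2,1)$ corner gives exactly $H_{g_{1}}T_{f_{2}}+\tilde{T}_{v_{1}}H_{g_{2}}=H_{g_{2}}T_{f_{1}}+\tilde{T}_{v_{2}}H_{g_{1}}$, which is \eqref{2.1}. The $(1,2)$ corner reads $T_{f_{1}}H^{*}_{\bar{u}_{2}}+H^{*}_{\bar{u}_{1}}\tilde{T}_{v_{2}}=T_{f_{2}}H^{*}_{\bar{u}_{1}}+H^{*}_{\bar{u}_{2}}\tilde{T}_{v_{1}}$. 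Taking adjoints and using $T_f^*=T_{\bar f}$ and $\tilde{T}_v^*=\tilde{T}_{\bar v}$ turns this into \eqref{1.2}. So the off-diagonal corners are literally the first two equations, and the work lies in the diagonal corners.

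For the $(1,1)$ corner we rewrite each product with \eqref{eq:1m}, namely $T_{f_1}T_{f_2}=T_{f_1f_2}-H^{*}_{\bar{f}_1}H_{f_2}$. Since $T_{f_1f_2}=T_{f_2f_1}$, equality of the corners becomes
\[
H^{*}_{\bar{u}_{1}}H_{g_{2}}-H^{*}_{\bar{f}_1}H_{f_2}=H^{*}_{\bar{u}_{2}}H_{g_{1}}-H^{*}_{\bar{f}_2}H_{f_1}.
\]
This is a vanishing sum of the form $\sum H^*_{a_i}H_{b_i}=0$, so by Lemma \ref{k1} it is equivalent to a rank-at-most-four identity. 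Lemma \ref{k1}(5) gives $\sum a_{i-}\otimes b_{i-}=0$. We want this in the $V$-form used in \eqref{WW}, so we compute $H^*_aH_b$ via $H_b=VH_b^*V$ (see \eqref{VV}), or equivalently rerun the computation in the proof of Lemma \ref{k1} on the Toeplitz side using $I=\tilde T_{\bar z}\tilde T_z+\bar z\otimes\bar z$. That computation yields $T_{\bar z}(\sum H^*_{a_i}H_{b_i})T_z-\sum H^*_{a_i}H_{b_i}=-\sum Va_{i-}\otimes Vb_{i-}$ up to sign. In the same way the $(2,2)$ corner, rewritten with \eqref{eq:3m}, becomes a vanishing sum $\sum H_{c_i}H^*_{d_i}=0$, which is equivalent to $\sum c_{i-}\otimes d_{i-}=0$.

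We then match these with \eqref{WW}. By Lemma \ref{key}, \eqref{WW} is equivalent to four scalar-block identities. The $(p,r)$ block is
\[
V(\bar f_1)_-\otimes Vf_{2-}-V(\bar u_1)_-\otimes Vg_{2-}=V(\bar f_2)_-\otimes Vf_{1-}-V(\bar u_2)_-\otimes Vg_{1-},
\]
which is exactly the $(1,1)$ condition. The $(q,s)$ block is $g_{1-}\otimes(\bar u_2)_- -v_{1-}\otimes(\bar v_2)_-=g_{2-}\otimes(\bar u_1)_- -v_{2-}\otimes(\bar v_1)_-$, which is the $(2,2)$ condition. The mixed blocks $(p,s)$ and $(q,r)$ are what the proof of Lemma \ref{k2} extracts, via Theorem \ref{peller}, from \eqref{1.2} and \eqref{2.1}. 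Indeed, that proof shows that $X=H_{g_1}T_{f_2}+\tilde T_{v_1}H_{g_2}-H_{g_2}T_{f_1}-\tilde T_{v_2}H_{g_1}$ satisfies $XT_z-\tilde T_zX=$ the $(q,r)$ block expression. So once $X=0$, the $(q,r)$ block identity holds automatically, and likewise the $(p,s)$ block follows from \eqref{1.2} after taking adjoints and applying $V$.

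Hence the two sides are equivalent. Commutativity gives all four corner identities, and hence \eqref{1.2}, \eqref{2.1} and the $(p,r)$, $(q,s)$ blocks; the mixed blocks then follow from \eqref{1.2} and \eqref{2.1}, so Lemma \ref{key} yields \eqref{WW}. Conversely, \eqref{WW} restricted by Lemma \ref{key} to its diagonal blocks gives the two diagonal corners, and \eqref{1.2}, \eqref{2.1} give the off-diagonal ones. The main obstacle is bookkeeping rather than ideas. One must keep the signs and the placement of $V$ correct when converting $H^*_aH_b=0$ into a rank-one identity, checking that it is $Va_-\otimes Vb_-$ and not $a_-\otimes b_-$. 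One must also orient the Hankel-adjoint computation in \eqref{1.2} so that its block matches $V(\bar f_1)_-\otimes(\bar u_2)_-$ in \eqref{WW}. We would first verify these conventions on a single term, $H^*_{\bar f_1}H_{f_2}$, against the diagonal part of \eqref{rr} in Theorem \ref{main1}, whose proof already fixes this identification.
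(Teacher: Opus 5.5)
Your proposal is correct and matches the paper's proof essentially step by step. Both compare the corners of \eqref{RHRH} for $R_{H_1}R_{H_2}$ and $R_{H_2}R_{H_1}$, reduce the diagonal corners with \eqref{eq:1m}, \eqref{eq:3m} and Lemma \ref{k1}, get the mixed blocks from \eqref{1.2} and \eqref{2.1} via Lemma \ref{k2}, and assemble everything with Lemma \ref{key}. You also spell out the converse (the diagonal blocks of \eqref{WW} give back the diagonal corner identities), which the paper leaves implicit.
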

	\begin{proof}
Using \eqref{RHRH}, we have 
		\begin{align*}
		R_{H_{2}}R_{H_{1}}=\begin{bmatrix}T_{f_{2}}T_{f_{1}}+H^{*}_{\bar{u}_{2}}H_{g_{1}}&
		T_{f_{2}}H^{*}_{\bar{u}_{1}}+H^{*}_{\bar{u}_{2}}\tilde{T}_{v_{1}}\\
		   H_{g_{2}}T_{f_{1}}+\tilde{T}_{v_{2}}H_{g_{1}} &
		   H_{g_{2}}H^{*}_{\bar{u}_{1}}+\tilde{T}_{v_{2}}\tilde{T}_{v_{1}}
		\end{bmatrix}.
		\end{align*}
Using \eqref{eq:1m} and \eqref{eq:3m}, the equality $R_{H_{1}}R_{H_{2}}=R_{H_{2}}R_{H_{1}}$ holds if and only if the following four operator equations are satisfied.
		\begin{align}
		H^{*}_{\bar{f}_{1}}H_{f_{2}}-H^{*}_{\bar{u}_{1}}H_{g_{2}}
		&=H^{*}_{\bar{f}_{2}}H_{f_{1}}-H^{*}_{\bar{u}_{2}}H_{g_{1}}; \label{1.1}\\
		H_{\bar{u}_{2}}T_{\bar{f}_{1}}+\tilde{T}_{\bar{v}_{2}}H_{\bar{u}_{1}}
		&=H_{\bar{u}_{1}}T_{\bar{f}_{2}}+\tilde{T}_{\bar{v}_{1}}H_{\bar{u}_{2}};\nonumber\\
		H_{g_{1}}T_{f_{2}}+\tilde{T}_{v_{1}}H_{g_{2}}&=H_{g_{2}}T_{f_{1}}+\tilde{T}_{v_{2}}H_{g_{1}};\nonumber\\
		H_{g_{1}}H^{*}_{\bar{u}_{2}}
		-H_{v_{1}}H^{*}_{\bar{v}_{2}}
		&=H_{g_{2}}H^{*}_{\bar{u}_{1}}-H_{v_{2}}H^{*}_{\bar{v}_{1}}.\label{2.2}
		\end{align}

By Lemma \ref{k1}, \eqref{1.1} is equivalent to
\begin{align*}
V(\bar{f_{1}})_{-}\otimes Vf_{2-}- V(\bar{u}_{1})_{-}\otimes Vg_{2-}
= V(\bar{f_{2}})_{-}\otimes Vf_{1-}- V(\bar{u}_{2})_{-}\otimes Vg_{1-}.
\end{align*}
Similarly, \eqref{2.2} is equivalent to
\begin{align*}
g_{1-}\otimes (\bar{u}_{2})_{-}-v_{1-}\otimes (\bar{v}_{2})_{-}&=g_{2-}\otimes (\bar{u}_{1})_{-}-v_{2-}\otimes (\bar{v}_{1})_{-}.
\end{align*}
By Lemma \ref{k2}, \eqref{1.2} and \eqref{2.1} imply that
\begin{align*}
V(\bar{f_{1}})_{-}\otimes (\bar{u}_{2})_{-}- V(\bar{u}_{1})_{-}\otimes (\bar{v}_{2})_{-}
&= V(\bar{f_{2}})_{-}\otimes (\bar{u}_{1})_{-}- V(\bar{u}_{2})_{-}\otimes (\bar{v}_{1})_{-};\\
g_{1-}\otimes Vf_{2-}-v_{1-}\otimes Vg_{2-}&=g_{2-}\otimes Vf_{1-}-v_{2-}\otimes Vg_{1-}.
\end{align*}
Thus,
\begin{align*}
&\brackbinom{V(\bar{f_{1}})_{-}}{g_{1-}}\otimes \brackbinom{Vf_{2-}}{(\bar{u}_{2})_{-}}
-\brackbinom{V(\bar{u}_{1})_{-}}{v_{1-}}\otimes \brackbinom{Vg_{2-}}{(\bar{v}_{2})_{-}}\\
=&
\brackbinom{V(\bar{f_{2}})_{-}}{g_{2-}}\otimes \brackbinom{Vf_{1-}}{(\bar{u}_{1})_{-}}
-\brackbinom{V(\bar{u}_{2})_{-}}{v_{2-}}\otimes \brackbinom{Vg_{1-}}{(\bar{v}_{1})_{-}}
\end{align*}
by Lemma \ref{key}.
\end{proof}
\begin{corollary}\label{odd}
Let ${H_{1}=\begin{bmatrix}
		\begin{smallmatrix}
		   f_{1}& u_{1}\\
		   g_{1} & v_{1} \end{smallmatrix}
		\end{bmatrix}},{H_{2}=\begin{bmatrix}
		\begin{smallmatrix}
		   f_{2}& u_{2}\\
		   g_{2} & v_{2} \end{smallmatrix}
		\end{bmatrix}}$  in $L_{2\times2}^{\infty},$
if $R_{H_{1}}R_{H_{2}}=R_{H_{2}}R_{H_{1}}$ and $f_1=v_1,f_2=v_2,$ then
$\brackbinom{V(\bar{f_{1}})_{-}}{g_{1-}}\otimes \brackbinom{Vf_{2-}}{(\bar{u}_{2})_{-}}
-\brackbinom{V(\bar{u}_{1})_{-}}{f_{1-}}\otimes \brackbinom{Vg_{2-}}{(\bar{f}_{2})_{-}}$
cannot be a rank-one operator.
\end{corollary}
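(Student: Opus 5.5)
The plan is to find a hidden symmetry of the operator in \eqref{WW} when $f_1=v_1$ and $f_2=v_2$. Once that symmetry is in hand, a rank-one operator cannot satisfy it. Let $\mathcal{K}=L^2\oplus L^2$, and define the map $J$ on $\mathcal{K}$ by $J\brackbinom{x}{y}=\brackbinom{Vy}{Vx}$. Since $V$ is anti-unitary and $V^2=I$ (indeed $V(V h)=\bar z\,\overline{\bar z\bar h}=h$), $J$ is an anti-unitary involution on $\mathcal{K}$.

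Under the hypothesis $v_i=f_i$, introduce for $i=1,2$ the vectors
\[
a_i=\brackbinom{V(\bar f_i)_-}{g_{i-}},\quad b_i=\brackbinom{V(\bar u_i)_-}{f_{i-}},\quad c_i=\brackbinom{Vf_{i-}}{(\bar u_i)_-},\quad d_i=\brackbinom{Vg_{i-}}{(\bar f_i)_-}.
\]
Using $V^2=I$, one checks directly that $Jb_i=c_i$ and $Ja_i=d_i$. Set $W_{12}=a_1\otimes c_2-b_1\otimes d_2$, which is the operator in the statement, and $W_{21}=a_2\otimes c_1-b_2\otimes d_1$. Then
\[
W_{12}=a_1\otimes Jb_2-b_1\otimes Ja_2,\qquad W_{21}=a_2\otimes Jb_1-b_2\otimes Ja_1 .
\]
By Theorem \ref{main2}, commutativity gives \eqref{WW}, namely $W_{12}=W_{21}$.

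Next I verify two elementary identities for rank-one operators. The first is $J(x\otimes y)J=Jx\otimes Jy$; it follows from $\langle Ja,Jb\rangle=\langle b,a\rangle$, since $J(x\otimes y)Jz=\overline{\langle Jz,y\rangle}Jx=\langle z,Jy\rangle Jx$. The second is $(x\otimes y)^*=y\otimes x$. Together with $J^2=I$ these give
\[
(JW_{12}J)^*=(Ja_1\otimes b_2-Jb_1\otimes a_2)^*=b_2\otimes Ja_1-a_2\otimes Jb_1=-W_{21}.
\]
Hence $W:=W_{12}$ satisfies $JWJ=-W^*$. This identification of the symmetry is the crux of the argument; the rest is routine.

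Finally, suppose $W=x\otimes y$ with $x,y\neq 0$. The symmetry gives $Jx\otimes Jy=-y\otimes x$. Comparing ranges and co-ranges, $Jx=\alpha y$ and $Jy=\beta x$ for some scalars $\alpha,\beta$. Substituting back, $Jx\otimes Jy=\alpha\bar\beta\, y\otimes x$, so $\alpha\bar\beta=-1$. On the other hand, $J^2=I$ and antilinearity give $y=J(Jy)=J(\beta x)=\bar\beta Jx=\bar\beta\alpha y$, so $\alpha\bar\beta=1$. This contradiction shows that $W$ cannot be rank one.

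The main point to double-check is the computation $Jb_i=c_i$ and $Ja_i=d_i$, which uses $v_i=f_i$ together with $V^2=I$, along with the conjugate-linearity bookkeeping in $J(x\otimes y)J=Jx\otimes Jy$.
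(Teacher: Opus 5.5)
Your proof is correct and follows the paper's route: the paper uses the same conjugation $C\brackbinom{p}{q}=\brackbinom{Vq}{Vp}$ and derives the same skew-symmetry $W=-CW^{*}C$ (equivalently your $JWJ=-W^{*}$) from \eqref{WW} when $v_i=f_i$. The only difference is at the last step: the paper cites the known theorem that a finite-rank complex skew-symmetric operator cannot have odd rank, whereas your final paragraph proves the rank-one case directly via the contradiction $\alpha\bar\beta=-1$ versus $\alpha\bar\beta=1$, which makes the argument self-contained.
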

\begin{proof}
For $p,q\in L^{2},$ the antilinear operator $C$ is defined by $C\brackbinom{p}{q}=\brackbinom{Vq}{Vp}.$
Furthermore, $C$ is an isometric involution. Let
 \[W=\brackbinom{V(\bar{f_{1}})_{-}}{g_{1-}}\otimes \brackbinom{Vf_{2-}}{(\bar{u}_{2})_{-}}
 -\brackbinom{V(\bar{u}_{1})_{-}}{f_{1-}}\otimes \brackbinom{Vg_{2-}}{(\bar{f}_{2})_{-}},\]
\eqref{WW} implies that $W=-CW^{*}C.$ Therefore, $W$ is a complex skew symmetric operator.
As shown in \cite[Theorem 2]{chen2015ranks}, a finite-rank complex skew-symmetric operator cannot have an odd rank, and thus 
$W$ cannot be a rank-one.
\end{proof}
Next, we will solve for the functions in the three operator equations from Theorem \ref{main2}. 
First, we solve \eqref{WW}, then substitute these solutions into  \eqref{1.2} and \eqref{2.1} , 
ultimately identifying the solutions that satisfy both \eqref{1.2} and \eqref{2.1}. When solving  \eqref{WW}, 
we will analyze three cases: when both sides of the equation \eqref{WW} are zero, rank one, and rank two, 
respectively. This approach will yield the following three theorems.

\begin{theorem}\label{mainc1}
If  both sides of \eqref{WW} are zero, then $R_{H_{1}}R_{H_{2}}=R_{H_{2}}R_{H_{1}}$ 
	if and only if one of the following statements holds.
\begin{enumerate}
	\item [$(1)$]$\bar{f}_1,\bar{f}_2,g_1,g_2,\bar{u}_1,\bar{u}_2,v_1,v_2\in H^{\infty}.$
	\item [$(2)$]$\bar{f}_1,\bar{f}_2,g_1,g_2,\bar{u}_2,(\bar{f}_2 - \bar{v}_2)\bar{u}_1\in H^{\infty},$ and $v_2$ is constant.
	\item [$(\hat{2})$] $\bar{f}_1,\bar{f}_2,g_1,g_2,\bar{u}_1,(\bar{f}_1 - \bar{v}_1)\bar{u}_2\in H^{\infty},$ and $v_1$ is constant.
	\item [$(3)$]$g_2,\bar{u}_1,\bar{u}_2,v_1,v_2,(f_2 -v_2)g_1\in H^{\infty},$ and $f_2$ is  constant.
	\item [$(\hat{3})$]$g_1,\bar{u}_1,\bar{u}_2,v_1,v_2,(f_1 -v_1)g_2\in H^{\infty},$ and $f_1$ is  constant.
	\item [$(4)$]$g_2,\bar{u}_2,(\bar{f}_2-\bar{v}_2 )\bar{u}_1,(f_2 -v_2)g_1\in H^{\infty},f_2$ and $v_2$ are  constant.
	\item [$(\hat{4})$]$g_1,\bar{u}_1,(\bar{f}_1-\bar{v}_1 )\bar{u}_2,(f_1 -v_1)g_2\in H^{\infty},f_1$ and $v_1$ are  constant.
	\item [$(5)$]$\bar{f}_1.\bar{f}_2,g_1,g_2,\bar{v}_1,\bar{v}_2,(\bar{f}_1-\bar{v}_1)\bar{u}_2-(\bar{f}_2-\bar{v}_2)\bar{u}_1\in H^{\infty}.$
	\item [$(6)$]$g_1,g_2,\bar{u}_1,\bar{u}_2\in H^{\infty},$  $f_2$ and $v_1$ are constants.
	\item [$(\hat{6})$]$g_1,g_2,\bar{u}_1,\bar{u}_2\in H^{\infty},$  $f_1$ and $v_2$ are constants.
	\item [$(7)$]$g_1,g_2,\bar{u}_2,\bar{v}_1,\bar{v}_2,(\bar{f}_2 -\bar{v}_2)\bar{u}_1\in H^{\infty},$ and $f_2$ is constant.
	\item [$(\hat{7})$]$g_1,g_2,\bar{u}_1,\bar{v}_1,\bar{v}_2,(\bar{f}_1 -\bar{v}_1)\bar{u}_2\in H^{\infty},$ and $f_1$ is constant.
	\item [$(8)$]There exists a non-zero constant $\lambda$ such that $g_1,g_2,\bar{f}_1-\bar{\lambda}\bar{u}_1,$ 
	$\bar{v}_2-\bar{\lambda}\bar{u}_2,\bar{f}_1 \bar{f}_2 + \bar{v}_1 \bar{v}_2- \bar{f}_1 \bar{v}_2  \in H^{\infty},f_2$ and $v_1$ are constant.
	\item [$(\hat{8})$]There exists a non-zero constant $\mu$ such that $g_1,g_2,\bar{f}_2-\bar{\mu}\bar{u}_2,
	\bar{v}_1-\bar{\mu}\bar{u}_1,\bar{f}_1 \bar{f}_2 + \bar{v}_1 \bar{v}_2- \bar{f}_2 \bar{v}_1 \in H^{\infty},f_1$ and $v_2$ are constant.
	\item [$(9)$]$f_1,f_2,\bar{u}_1,\bar{u}_2,v_1,v_2,(f_2 -v_2)g_1-(f_1-v_1)g_2\in H^{\infty}.$
	\item [$(10)$]$f_1,f_2,g_2,\bar{u}_1,\bar{u}_2,(f_2-v_2)g_1\in H^{\infty},$ and $v_2$ is constant.
	\item [$(\hat{10})$]$f_1,f_2,g_1,\bar{u}_1,\bar{u}_2,(f_1-v_1)g_2\in H^{\infty},$ and $v_1$ is constant.
	\item [$(11)$]There exists a non-zero constant $\lambda$ such that $\bar{u}_1,\bar{u}_2,
	g_1-\lambda v_1,g_2-\lambda f_2,f_1 f_2+v_1v_2-v_1f_2\in H^{\infty},f_1$ and $v_2$ are constant.
	\item [$(\hat{11})$]There exists a non-zero constant $\mu$ such that $\bar{u}_1,\bar{u}_2,g_2-\mu v_2,
g_1-\mu f_1,f_1 f_2+v_1v_2-v_2f_1\in H^{\infty},f_2$ and $v_1$ are constant.
	\item [$(12)$]$f_1,f_2,g_1,g_2,\bar{u}_1,\bar{u}_2,\bar{v}_1,\bar{v}_2\in H^{\infty}.$
	\item [$(13)$]There exist two  non-zero constants $\lambda$ and $\mu$ such that 
	$\bar{f}_1-\bar{\lambda}\bar{u}_1,g_1-\lambda v_1,\\ g_2-\lambda f_2,\bar{v}_2-\bar{\lambda}\bar{u}_2, 
	\bar{f}_2-\bar{\mu}\bar{u}_2,g_2-\mu v_2,g_1-\mu f_1,\bar{v}_1-\bar{\mu}\bar{u}_1\in H^{\infty},$ and $\lambda f_2 v_1-\mu f_1 v_2$ is constant. 
\end{enumerate}
\end{theorem}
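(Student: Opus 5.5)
By Theorem \ref{main2}, commutativity is equivalent to the conjunction of \eqref{1.2}, \eqref{2.1} and \eqref{WW}. So under the hypothesis that both sides of \eqref{WW} vanish, the plan is to turn that vanishing into explicit analyticity conditions on the eight symbols, and then solve \eqref{1.2} and \eqref{2.1} under each resulting configuration.

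\textbf{Step 1: reduce the vanishing of \eqref{WW} to a finite list of configurations.} Each side of \eqref{WW} is a difference of two rank-at-most-one operators. A difference $a\otimes b-c\otimes d$ vanishes exactly when either both terms are zero or $a\otimes b=c\otimes d\neq 0$. In the second case $a,c$ are proportional and $b,d$ are proportional, which produces a nonzero constant $\lambda$ (respectively $\mu$ for the right side) linking the relevant symbols. Translating the vectors back via $Vh_-=0\iff \bar h\in H^\infty$ and Lemma \ref{key} gives, on the left, precisely the alternatives (3.1)/(3.2) of Theorem \ref{main1} for the pair $(H_1,H_2)$. On the right we get the same alternatives for the pair $(H_2,H_1)$. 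Combining them yields a product list of cases: four ``zero'' subcases or the $\lambda$-case on the left, against four ``zero'' subcases or the $\mu$-case on the right. The symmetry $1\leftrightarrow 2$ pairs up cases, which accounts for the hatted statements.

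\textbf{Step 2: rewrite \eqref{2.1} and \eqref{1.2} in each case.} In every case of Step 1, use \eqref{HT} and \eqref{eq:2m} to collapse the left side of \eqref{2.1} into a single Hankel operator, exactly as in Cases A--E after Theorem \ref{main1}. For example:
\begin{itemize}
\item if $g_1,g_2\in H^\infty$, \eqref{2.1} is trivial;
\item if $g_2\in H^\infty$ and $f_2$ is analytic, then $H_{g_1}T_{f_2}-\tilde T_{v_2}H_{g_1}=H_{(f_2-v_2)g_1}$ requires an analytic $v_2$, and so on.
\end{itemize}
In general the equation becomes either $H_{\phi}=0$, i.e.\ $\phi\in H^\infty$, or an identity of the form
\[
H_{a}T_{b}-\tilde T_{c}H_{a}=0
\]
with one factor not analytic. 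For the latter, I would apply Lemma \ref{k2}: a vanishing operator is in particular Hankel, so $a_-\otimes Vb_- - a_-\otimes Vc_-=0$, forcing $(b-c)_-=0$ on the non-analytic side. Symmetrically, the vanishing of $\tilde T_c H_a$ forces the dual conditions. Subtracting known analytic parts then shows that functions such as $f_2$ or $v_2$ must be both analytic and coanalytic, hence constant; this is where conditions of the type ``$f_2$ is constant'' arise. Equation \eqref{1.2} is the adjoint-type analogue with $\bar u_i,\bar f_i,\bar v_i$, and is treated identically. For the mixed $\lambda$/$\mu$ cases ((8), (11), (13)), substitute $g_1=\lambda v_1+h$ etc.\ and use \eqref{eq:2m} to express everything via a single symbol such as $\lambda f_2v_1-\mu f_1v_2$, requiring it to be in $H^\infty$ and coanalytic, hence constant.

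\textbf{Step 3: check sufficiency and expect the main obstacle.} For sufficiency, verify in each listed case that \eqref{1.2} and \eqref{2.1} hold by the same collapsing identities; the vanishing of \eqref{WW} is then automatic. The main obstacle is the bookkeeping of Step 2: there are roughly $5\times 5$ combinations, and many overlap or are absorbed into others. Further, deducing constancy from an equation like $H_aT_b=\tilde T_cH_a$ when $a$ is not analytic needs care. I would first try Lemma \ref{k2} together with the observation that $a_-\otimes V(b-c)_-=0$ and $a_-\neq 0$ imply $(b-c)_-=0$, iterating with adjoints when needed. Finally, organize the surviving cases by which of $g_i,\bar u_i$ are analytic to arrive at the list (1)--(13).
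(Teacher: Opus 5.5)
Your skeleton is the paper's. The paper also splits the vanishing of each side of \eqref{WW} into (L1)--(L5) and (R1)--(R5), and then compares the off-diagonal entries of $R_{H_1}R_{H_2}$ and $R_{H_2}R_{H_1}$; that comparison is exactly \eqref{2.1} together with the adjoint of \eqref{1.2}. The problem is your Step 2 mechanism for producing the constancy conditions, which would fail.

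First, Lemma \ref{k2} is misquoted. Write $H_aT_b-\tilde T_cH_a=H_aT_b+\tilde T_{-c}H_a$. The lemma then says this is Hankel iff $a_-\otimes Vb_-+c_-\otimes Va_-=0$, not $a_-\otimes Vb_--a_-\otimes Vc_-=0$. This condition does not force $(b-c)_-=0$: for instance $b_-=-\gamma a_-$, $c_-=\gamma a_-$ is allowed.

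More fundamentally, under the standing hypothesis Lemma \ref{k2} yields nothing at all. The Hankel condition for the operator in \eqref{2.1} is $g_{1-}\otimes Vf_{2-}-v_{1-}\otimes Vg_{2-}=g_{2-}\otimes Vf_{1-}-v_{2-}\otimes Vg_{1-}$, and via Lemma \ref{key} this is one block of \eqref{WW}, which is assumed; the same holds for \eqref{1.2}. Indeed, since both sides of \eqref{WW} vanish, Theorem \ref{main1} already makes both products GSIOs. So \eqref{1.2} and \eqref{2.1} can only constrain the symbols through a Hankel symbol being analytic, and any attempt to extract analyticity or constancy from Hankel-ness is vacuous. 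Relatedly, $Vh_-=0$ iff $h\in H^\infty$, not $\bar h\in H^\infty$.

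The constancy conditions come from Step 1 itself, by intersecting an (L) case with an (R) case. For example, (L2) gives $\bar v_2\in H^\infty$ and (R1) gives $v_2\in H^\infty$, so $v_2$ is constant. In (L5)+(R1), $g_2-\lambda f_2\in H^\infty$, $g_2\in H^\infty$ and $\bar f_2\in H^\infty$ make $f_2$ constant.

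With these recorded, in each of the 25 combinations both products have the explicit matrices \eqref{1.1.}--\eqref{casec}. Both have diagonal entries $T_{f_1f_2}$ and $\tilde T_{v_1v_2}$, so \eqref{1.2} and \eqref{2.1} always reduce to $H_\phi=0$ for an explicit $\phi$. There is never a residual equation of the form $H_aT_b=\tilde T_cH_a$ to solve.

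Substituting the case relations into $\phi\in H^\infty$ gives the listed conditions. Examples are $(\bar f_2-\bar v_2)\bar u_1\in H^\infty$ in (L2)+(R1), and $\bar f_1\bar f_2+\bar v_1\bar v_2-\bar f_1\bar v_2\in H^\infty$ in (8). Case (8) is an $H^\infty$ condition, not a constancy one. The ``analytic and coanalytic'' pattern appears only in (L5)+(R5). There \eqref{2.1} gives $\lambda f_2v_1-\mu f_1v_2\in H^\infty$. The adjoint of \eqref{1.2} gives $(1/\bar\mu)\bar f_2\bar v_1-(1/\bar\lambda)\bar f_1\bar v_2\in H^\infty$, i.e.\ the conjugate of $\lambda f_2v_1-\mu f_1v_2$ is in $H^\infty$. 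Hence that function is constant, which is (13).
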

\begin{remark}
$(\hat{2})$ is obtained by swapping the function subscripts 1 and 2 in (2)(changing subscript 1 to 2 and 2 to 1) .
$(\hat{8})$ not only involves swapping the function subscripts in (8) but also requires replacing the constant 
$\lambda$ with $\mu.$ The others are similar.
\end{remark}
\begin{proof}
Assume that $R_{H_{1}}R_{H_{2}}=R_{H_{2}}R_{H_{1}}$  and both sides of \eqref{WW} are zero.
Note that  
$\brackbinom{V(\bar{f_{1}})_{-}}{g_{1-}}\otimes \brackbinom{Vf_{2-}}{(\bar{u}_{2})_{-}}
=\brackbinom{V(\bar{u}_{1})_{-}}{v_{1-}}\otimes \brackbinom{Vg_{2-}}{(\bar{v}_{2})_{-}}$ 
if and only if 
one of the following statements holds.
\begin{enumerate}[label=(L\arabic*)]
	\item $\brackbinom{V(\bar{f_{1}})_{-}}{g_{1-}}=\brackbinom{V(\bar{u}_{1})_{-}}{v_{1-}}=0,\
	 R_{H_1}R_{H_2}=\begin{bmatrix}\begin{smallmatrix}T_{f_{1}f_{2}}& H^{*}_{\bar{f}_1\bar{u}_{2}}\\
		H_{v_{1}g_{2}} &  \tilde{T}_{v_{1} v_{2}} \end{smallmatrix}\end{bmatrix}\ \ (see \eqref{1.1.}).$
	\item $\brackbinom{V(\bar{f_{1}})_{-}}{g_{1-}}=\brackbinom{Vg_{2-}}{(\bar{v}_{2})_{-}}=0,\ 
	R_{H_1}R_{H_2}=\begin{bmatrix}\begin{smallmatrix}T_{f_{1}f_{2}}& H^{*}_{\bar{f}_1 \bar{u}_2+\bar{u}_1 \bar{v}_2}\\
		0 &  \tilde{T}_{v_{1} v_{2}} \end{smallmatrix}\end{bmatrix}\ \ (see \eqref{1.2.}).$
	\item $\brackbinom{Vf_{2-}}{(\bar{u}_{2})_{-}}=\brackbinom{V(\bar{u}_{1})_{-}}{v_{1-}}=0,\ 
	R_{H_1}R_{H_2}=\begin{bmatrix}\begin{smallmatrix}T_{f_{1}f_{2}}& 0\\
		H_{g_1 f_2 +v_1 g_2} &  \tilde{T}_{v_{1} v_{2}} \end{smallmatrix}\end{bmatrix}\ \ (see \eqref{2.1.}).$
	\item $\brackbinom{Vf_{2-}}{(\bar{u}_{2})_{-}}=\brackbinom{Vg_{2-}}{(\bar{v}_{2})_{-}}=0,\
	 R_{H_1}R_{H_2}=\begin{bmatrix}\begin{smallmatrix}T_{f_{1}f_{2}}& H^{*}_{\bar{u}_1 \bar{v}_2 }\\
		H_{g_1 f_2} &  \tilde{T}_{v_{1} v_{2}} \end{smallmatrix}\end{bmatrix} \ \ (see \eqref{2.2.}).$
	\item There exists a non-zero constant $\lambda$ such that $\brackbinom{V(\bar{f_{1}})_{-}}{g_{1-}}=\lambda \brackbinom{V(\bar{u}_{1})_{-}}{v_{1-}},
	\brackbinom{Vg_{2-}}{(\bar{v}_{2})_{-}}=\bar{\lambda}\brackbinom{Vf_{2-}}{(\bar{u}_{2})_{-}}, \ \ 
	R_{H_{1}}R_{H_{2}}=
	\begin{bmatrix}\begin{smallmatrix}T_{f_1 f_2}& H^{*}_{1/\bar{\lambda}\bar{f}_1 \bar{v}_2}\\
		H_{\lambda v_{1}f_2} &   \tilde{T}_{v_1 v_2} \end{smallmatrix}\end{bmatrix}\ \ (see \eqref{casec}).$
\end{enumerate}

Similarly,
$\brackbinom{V(\bar{f_{2}})_{-}}{g_{2-}}\otimes \brackbinom{Vf_{1-}}{(\bar{u}_{1})_{-}}
=\brackbinom{V(\bar{u}_{2})_{-}}{v_{2-}}\otimes \brackbinom{Vg_{1-}}{(\bar{v}_{1})_{-}}$ if and only if 
one of the following statements holds.
\begin{enumerate}[label=(R\arabic*)]
	\item $\brackbinom{V(\bar{f_{2}})_{-}}{g_{2-}}=\brackbinom{V(\bar{u}_{2})_{-}}{v_{2-}}=0,\
	 R_{H_2}R_{H_1}=\begin{bmatrix}\begin{smallmatrix}T_{f_{1}f_{2}}& H^{*}_{\bar{f}_2\bar{u}_{1}}\\
	H_{v_{2}g_{1}} &  \tilde{T}_{v_{1} v_{2}} \end{smallmatrix}\end{bmatrix}\ \ (see \eqref{1.1.}).$
	\item $\brackbinom{V(\bar{f_{2}})_{-}}{g_{2-}}=\brackbinom{Vg_{1-}}{(\bar{v}_{1})_{-}}=0, 
	\ R_{H_2}R_{H_1}=\begin{bmatrix}\begin{smallmatrix}T_{f_{1}f_{2}}& H^{*}_{\bar{f}_2 \bar{u}_1+\bar{u}_2 \bar{v}_1}\\
	0 &  \tilde{T}_{v_{1} v_{2}} \end{smallmatrix}\end{bmatrix}\ \ (see \eqref{1.2.}).$
	\item $\brackbinom{Vf_{1-}}{(\bar{u}_{1})_{-}}=\brackbinom{V(\bar{u}_{2})_{-}}{v_{2-}}=0,
	R_{H_2}R_{H_1}=\begin{bmatrix}\begin{smallmatrix}T_{f_{1}f_{2}}& 0\\
	H_{g_2 f_1 +v_2 g_1} &  \tilde{T}_{v_{1} v_{2}} \end{smallmatrix}\end{bmatrix}\ \ (see \eqref{2.1.}).$
	\item $\brackbinom{Vf_{1-}}{(\bar{u}_{1})_{-}}=\brackbinom{Vg_{1-}}{(\bar{v}_{1})_{-}}=0,
	\ R_{H_2}R_{H_1}=\begin{bmatrix}\begin{smallmatrix}T_{f_{1}f_{2}}& H^{*}_{\bar{u}_2 \bar{v}_1 }\\
	H_{g_2 f_1} &  \tilde{T}_{v_{1} v_{2}} \end{smallmatrix}\end{bmatrix}\ \ (see \eqref{2.2.}).$
	\item There exists a non-zero constant $\mu$ such that $\brackbinom{V(\bar{f_{2}})_{-}}{g_{2-}}=\mu \brackbinom{V(\bar{u}_{2})_{-}}{v_{2-}},
	\brackbinom{Vg_{1-}}{(\bar{v}_{1})_{-}}=\bar{\mu}\brackbinom{Vf_{1-}}{(\bar{u}_{1})_{-}}.$
	$R_{H_{2}}R_{H_{1}}=
\begin{bmatrix}\begin{smallmatrix}T_{f_1 f_2}& H^{*}_{1/\bar{\mu}\bar{f}_2 \bar{v}_1}\\
	H_{\mu v_{2}f_1} &   \tilde{T}_{v_1 v_2} \end{smallmatrix}\end{bmatrix}\ \ (see \eqref{casec}).$
\end{enumerate}

Combining the conditions of (L1) to (L5) with those of (R1) to (R5) yields the following table.
For example, we denote the combination of condition (L1) and condition (R1) as (L1)+(R1). 
The following analysis shows that this case corresponds to  (1) in the conclusion.
\renewcommand{\arraystretch}{1.5}
\begin{center}
\begin{tabular}{|c|c|c|c|c|c|}
	\hline
	 & (L1)& (L2) & (L3) & (L4)  &  (L5)\\ 
	\hline
	(R1)              & (1)           & (2)          & (3)         & (4)   & (4) \\ 
	\hline
	(R2)                      & $(\hat{2})$     & (5)          & (6)         & (7)   & (8) \\ 
	\hline
	(R3)                             &  $(\hat{3})$   &  $(\hat{6})$  & (9)         & (10)  & (11) \\ 
	\hline
	(R4)                            &  $(\hat{4})$   &  $(\hat{7})$   & $(\hat{10})$ & (12)  & $(\hat{4})$  \\ 
	\hline
	(R5)                           &  $(\hat{4})$  &  $(\hat{8})$  &  $(\hat{11})$ &  (4)  & (13) \\ 
	\hline
	\end{tabular} 
\end{center}

In case (L1)+(R1), $\bar{f}_1,\bar{f}_2,g_1,g_2,\bar{u}_1,\bar{u}_2,v_1,v_2\in H^{\infty},$ and  
$H_{v_1 g_2}=H_{\bar{f}_1\bar{u}_2}=H_{v_2 g_1}=H_{\bar{f}_2\bar{u}_1}=0.$
Thus, the operator matrices in (L1) and (R1) are equal, and we have derived condition (1) stated in the conclusion.

We proceed to examine three representative cases, the remaining cases admit analogous analysis. 
In each case, the analysis confirms that the equality $R_{H_{1}}R_{H_{2}}=R_{H_{2}}R_{H_{1}}$ 
necessarily requires the stated condition.

In case (L2)+(R1), $\bar{f}_1,\bar{f}_2,g_1,g_2,\bar{u}_2\in H^{\infty},$ $v_2$ is constant and 
$H_{v_2g_1}=H_{\bar{f}_1\bar{u}_2}=0.$ By comparing the operator matrices in condition (L2) and condition (R1), we have
$H_{\bar{f}_2\bar{u}_1}=H_{\bar{u}_1\bar{v}_2}.$ Thus, $(\bar{f}_2 - \bar{v}_2)\bar{u}_1\in H^{\infty}.$

In case (L5)+(R1), $\bar{f}_1 -\bar{\lambda}\bar{u}_1,g_1-\lambda v_1,g_2-\lambda f_2,
\bar{v}_2-\bar{\lambda}\bar{u}_2,\bar{f}_2,g_2,\bar{u}_2,v_2\in H^{\infty}.$
$g_2-\lambda f_2,g_2\in H^{\infty}$ implies $f_2\in H^{\infty},f_2$ is constant.
$\bar{v}_2-\bar{\lambda}\bar{u}_2,\bar{u}_2\in H^{\infty}$ implies $\bar{v}_2\in H^{\infty},v_2$ is constant.
By comparing the operator matrices in condition (L2) and condition (R1),
 we have $v_2g_1-\lambda v_1 f_2,\bar{f}_2\bar{u}_1-(1/\lambda)\bar{f}_1\bar{v}_2\in H^{\infty}.$
Substituting $g_1-\lambda v_1\in H^{\infty}$ into $v_2g_1-\lambda v_1 f_2\in H^{\infty}$ yields $g_1(f_2-v_2)\in H^{\infty}.$
Substituting $\bar{f}_1 -\bar{\lambda}\bar{u}_1\in H^{\infty}$ into 
$\bar{f}_2\bar{u}_1-(1/\lambda)\bar{f}_1\bar{v}_2\in H^{\infty}$ yields $\bar{u}_1(\bar{f}_2-\bar{v}_2)\in H^{\infty}.$
and we have derived condition (4) stated in the conclusion.

In case (L5)+(R5), $\bar{f}_1-\bar{\lambda}\bar{u}_1,g_1-\lambda v_1, g_2-\lambda f_2,\bar{v}_2-\bar{\lambda}\bar{u}_2,
\bar{f}_2-\bar{\mu}\bar{u}_2,g_2-\mu v_2,g_1-\mu f_1,\bar{v}_1-\bar{\mu}\bar{u}_1\in H^{\infty}.$ 
By comparing the operator matrices in condition (L5) and condition (R5), we have
$H_{\lambda v_{1}f_2}=H_{\mu v_{2}f_1}$ and $ H_{1/\bar{\mu}\bar{f}_2 \bar{v}_1}=H_{1/\bar{\lambda}\bar{f}_1 \bar{v}_2}.$ 
Thus, $\lambda v_{1}f_2-\mu v_{2}f_1$ is constant.
\end{proof}
\begin{theorem}\label{mainc2}
	If  both sides of \eqref{WW} are rank one operators,  then $R_{H_{1}}R_{H_{2}}=R_{H_{2}}R_{H_{1}}$ 
	if and only if one of the following statements holds.
\begin{enumerate}
	\item 
	There exist constants $\lambda,\mu,\alpha(\neq 0)$ such that \\ 
	$\bar{f}_1 -\bar{\lambda}\bar{u}_1,g_1 -\lambda v_1,\bar{f}_2 -\bar{\mu}\bar{u}_2,g_2-\mu v_2,$\\
	$\bar{u}_1 -\bar{\alpha}\bar{u}_2,v_1-\alpha v_2,\mu f_1 - g_1-\alpha(\lambda f_2 -g_2),
	\bar{\mu}\bar{u}_1-\bar{v}_1-\bar{\alpha}(\bar{\lambda}\bar{u}_2-\bar{v}_2),$\\
	$g_2(v_1-\alpha v_2)+v_2(\alpha\lambda f_2 -\mu f_1),
	\bar{u}_2\bar{f}_1 -\bar{u}_1 \bar{f}_2 +\bar{\mu}\bar{u}_1\bar{u}_2-\bar{v}_1\bar{u}_2+\bar{\alpha}\bar{v}_2\bar{u}_2-\bar{\lambda}\bar{\alpha}\bar{u}^{2}_2\in H^{\infty}.$
    \item There exist constants $\lambda,\mu,\alpha(\neq 0)$ such that \\ 
    $\bar{u}_1-\bar{\lambda}\bar{f}_1, v_1-\lambda g_1,\bar{f}_2 -\bar{\mu}\bar{u}_2,g_2-\mu v_2,$\\
$\bar{f}_1 -\bar{\alpha}\bar{u}_2,g_1-\alpha v_2,\mu f_1 -g_1-\alpha (f_2 -\lambda g_2),\bar{\mu}_1-\bar{v}_1 -\bar{\alpha}(\bar{u}_2 -\lambda \bar{v}_2),\\
g_2 (v_2-\lambda \alpha v_2)+v_2(\alpha f_2 -\mu f_1),\\
\bar{u}_2\bar{f}_1 -\bar{u}_1 \bar{f}_2 -\bar{\alpha}\bar{u}^{2}_2 +\bar{\lambda}\bar{\alpha}\bar{v}_2\bar{u}_2
+\bar{\mu}\bar{u}_1\bar{u}_2-\bar{v}_1\bar{u}_2\in H^{\infty}.$
\item There exist constants $\lambda,\mu,\alpha(\neq 0)$ such that\\
$f_2 -\bar{\lambda} g_2,\bar{u}_2 -\lambda \bar{v}_2,\bar{f}_2 -\bar{\mu}\bar{u}_2,g_2-\mu v_2,$\\
$\lambda \bar{f}_1 -\bar{u}_1 -\bar{\alpha}\bar{u}_2,\bar{\lambda}g_1-v_1-\alpha v_2,\mu f_1 -g_1-\alpha g_2,
\bar{\mu}\bar{u}_1-\bar{v}_1-\bar{\alpha}\bar{v}_2,$\\
$g_1f_2+v_1g_2-\bar{\lambda}g_1g_2+\alpha v_2 g_2-\mu v_2f_1,\lambda \bar{v}_2\bar{f}_1+\bar{\mu}\bar{u}_1\bar{u}_2-\bar{v}_1\bar{u}_2-\bar{\alpha}\bar{v}_2\bar{u}_2-\bar{u}_1\bar{f}_2\in H^{\infty}.$
\item There exist constants $\lambda,\mu,\alpha(\neq 0)$ such that\\
$g_2 -\bar{\lambda}f_2,\bar{v}_2 -\lambda \bar{u}_2,\bar{f}_2 -\bar{\mu}\bar{u}_2,g_2-\mu v_2,$\\
$\bar{f}_1-\lambda \bar{u}_1-\bar{\alpha}\bar{u}_2,g_1-\bar{\lambda}v_1-\alpha v_2,\mu f_1-g_1-\alpha f_2,
\bar{\mu}\bar{u}_1-\bar{v}_1-\bar{\alpha}\bar{u}_2,$\\
$\bar{\lambda}v_1f_2-\mu v_2f_1+\alpha v_2 f_2,\bar{v}_2\bar{u}_1-\bar{u}_1\bar{f}_2+\bar{u}_2\bar{f}_1-\lambda \bar{u}_1\bar{u}_2
+\bar{\mu}\bar{u}_1\bar{u}_2-\bar{u}_2\bar{v}_1-\bar{\alpha}\bar{u}^{2}_2\in H^{\infty}.$
\item There exist constants $\lambda,\mu,\alpha(\neq 0)$ such that\\
$\bar{f}_1 -\bar{\lambda}\bar{u}_1,g_1 -\lambda v_1,\bar{u}_2-\bar{\mu}\bar{f}_2,v_2-\mu g_2,$\\
$\bar{u}_1 -\bar{\alpha}\bar{f}_2,v_1 -\alpha g_2,f_1-\mu g_1-\alpha (\lambda f_2 -g_2),
\bar{u}_1 -\bar{\mu}\bar{v}_1 -\bar{\alpha}(\bar{\lambda}\bar{u}_2-\bar{v}_2),
\bar{u}_2 f_1 -\mu \bar{v}_1 \bar{f}_2 +\alpha \bar{v}_2 \bar{f}_2 -\bar{\lambda}\bar{\alpha}\bar{u}_2\bar{f}_2,
v_1 g_2 -v_2 g_1 -f_1 g_2+\mu g_1 g_2-\alpha g^{2}_2 +\lambda \alpha g_2 f_2\in H^{\infty}.$
\item There exist constants $\lambda,\mu,\alpha(\neq 0)$ such that\\
$\bar{u}_1-\bar{\lambda}\bar{f}_1,v_1-\lambda g_1,\bar{u}_2-\bar{\mu}\bar{f}_2,v_2-\mu g_2,\\
\bar{f}_1 -\bar{\alpha}\bar{f}_2,g_1 -\alpha g_2,$
$f_1 -\mu g_1 -\alpha(f_2 -\lambda g_2), \bar{u}_1 -\bar{\mu} \bar{v}_1 -\bar{\alpha}(\bar{u}_2 -\bar{\lambda}\bar{v}_2),\\
\bar{u}_2\bar{f}_1 -\bar{\mu}\bar{v}_1 \bar{f}_2 -\bar{\alpha}\bar{u}_2\bar{f}_2 +\bar{\lambda}\bar{\alpha}\bar{v}_2\bar{f}_2,
v_1g_2-v_2g_1-g_2f_1+\mu g_1 g_2+\alpha g_2f_2-\lambda \alpha g^{2}_2\in H^{\infty}.$
\item There exist constants $\lambda,\mu,\alpha(\neq 0)$ such that\\
$f_2 -\bar{\lambda} g_2,\bar{u}_2 -\lambda \bar{v}_2,\bar{u}_2-\bar{\mu}\bar{f}_2,v_2-\mu g_2,$\\
$\lambda \bar{f}_1 -\bar{u}_1-\bar{\alpha}\bar{f}_2,\bar{\lambda}g_1-v_1-\alpha g_2,
f_1-\mu g_1-\alpha g_2,\bar{u}_1-\bar{\mu}\bar{v}_1-\bar{\alpha}\bar{v}_2,$\\
$\lambda \bar{v}_2\bar{f}_1-\bar{\mu}\bar{v}_1\bar{f}_2-\bar{\alpha}\bar{v}_2\bar{f}_2,
g_1f_2-v_2g_1+(v_1 -\bar{\lambda} g_1)g_2+g_2(\mu g_1 -f_1)+\alpha g^{2}_2\in H^{\infty}.$
\item There exist constants $\lambda,\mu,\alpha(\neq 0)$ such that\\
$g_2 -\lambda f_2,\bar{v}_2 -\lambda \bar{u}_2,\bar{u}_2-\bar{\mu}\bar{f}_2,v_2-\mu g_2,$\\
$\bar{f}_1-\bar{\lambda}\bar{u}_1-\bar{\alpha}\bar{f}_2,g_1-\bar{\lambda}v_1-\alpha g_2,
f_1-\mu g_1-\alpha f_2,\bar{u}_1-\bar{\mu}\bar{v}_1-\bar{\alpha}\bar{u}_2,$\\
$\bar{v}_2\bar{u}_1+\bar{u}_2\bar{f}_1-\lambda \bar{u}_1\bar{u}_2-\bar{\mu}\bar{v}_1\bar{f}_2-\bar{\alpha}\bar{u}_2\bar{f}_2,
g_1v_2+g_2f_1-\bar{\lambda}v_1f_2-\mu g_1g_2-\alpha g_2f_2\in H^{\infty}.$
\item There exist constants $\lambda,\mu,\alpha(\neq 0)$ such that\\
$\bar{f}_1 -\bar{\lambda}\bar{u}_1,g_1 -\lambda v_1,f_1 -\bar{\mu}g_1,\bar{u}_1-\mu \bar{v}_1,$\\
$\bar{u}_1-\bar{\alpha}(\mu \bar{f}_2-\bar{u}_2),v_1-\alpha (\bar{\mu}g_2-v_2),
g_1-\alpha(\lambda f_2-g_2),\bar{v}_1-\bar{\alpha}(\bar{\lambda}\bar{u}_2-\bar{v}_2),$\\
$\bar{u}_2\bar{f}_2-\bar{v}_1\bar{u}_2+\alpha (\bar{\lambda}\bar{u}_2-\bar{v}_2)(\bar{u}_2-\mu\bar{f}_2),
v_1g_2-g_2f_1+\alpha(\bar{\mu}g_2-v_2)(\lambda f_2 -g_2)\in H^{\infty}.$
\item There exist constants $\lambda,\mu,\alpha(\neq 0)$ such that\\
$\bar{u}_1-\bar{\lambda}\bar{f}_1, v_1-\lambda g_1,f_1 -\bar{\mu}g_1,\bar{u}_1-\mu \bar{v}_1,$\\
$\bar{f}_1-\bar{\alpha}(\mu \bar{f}_2-\bar{u}_2),g_1-\alpha (\bar{\mu}g_2 -v_2),
g_1-\alpha (f_2-\lambda g_2),\bar{v}_1-\bar{\alpha}(\bar{u}_2-\bar{\lambda}\bar{v}_2),$\\
$\bar{u}_2\bar{f}_1-\bar{v}_1\bar{u}_2+\bar{\alpha}(\bar{\lambda}\bar{v}_2-\bar{u}_2)(\mu \bar{f}_2-\bar{u}_2),
v_1g_2-g_2f_1+\alpha (\bar{\mu}g_2-v_2)(f_2-\lambda g_2)\in H^{\infty}.$
\item There exist constants $\lambda,\mu,\alpha(\neq 0)$ such that\\
$f_2 -\bar{\lambda} g_2,\bar{u}_2 -\lambda \bar{v}_2,f_1 -\bar{\mu}g_1,\bar{u}_1-\mu \bar{v}_1,$\\
$\lambda \bar{f}_1-\bar{u}_1-\bar{\alpha}(\mu \bar{f}_2-\bar{u}_2),\bar{\lambda}g_1-v_1-\alpha (\bar{\mu}g_2-v_2),g_1 -\alpha g_2, \bar{v}_1-\bar{\alpha}\bar{v}_2,$\\
$\bar{v}_1\bar{u}_2-\lambda \bar{v}_2 \bar{f}_1-\bar{\alpha}\bar{v}_2\bar{u}_2+\bar{\alpha}\mu \bar{v}_2\bar{f}_2,
g_1f_2-g_2f_1+(v_1 -\bar{\lambda}g_1)g_2+\alpha g_2(\bar{\mu}g_2 -v_2)\in H^{\infty}.$
\item There exist constants $\lambda,\mu,\alpha(\neq 0)$ such that\\
$g_2 -\bar{\lambda}f_2,\bar{v}_2 -\lambda \bar{u}_2,f_1 -\bar{\mu}g_1,\bar{u}_1-\mu \bar{v}_1,$\\
$\bar{f}_1-\bar{\lambda}\bar{u}_1-\bar{\alpha}(\mu \bar{f}_2-\bar{u}_2),g_1-\bar{\lambda}v_1-\alpha(\bar{\mu}g_2-v_2),
g_1-\alpha f_2,\bar{v}_1-\bar{\alpha}\bar{u}_2,$\\
$\bar{v}_2\bar{u}_1-\bar{v}_1\bar{u}_2+\bar{u}_2(\bar{f}_1-\lambda \bar{u}_1)+\bar{\alpha}\bar{u}_2(\bar{u}_2-\mu \bar{f}_2),
g_2f_1-\bar{\lambda} v_1 f_2-\bar{\mu}\alpha f_2 g_2+\alpha v_2 f_2\in H^{\infty}.$
\item There exist constants $\lambda,\mu,\alpha(\neq 0)$ such that\\
$\bar{f}_1 -\bar{\lambda}\bar{u}_1,g_1 -\lambda v_1,g_1-\bar{\mu}f_1,\bar{v}_1-\mu \bar{u}_1,$\\
$\bar{u}_1-\bar{\alpha}(\bar{f}_2-\mu \bar{u}_2),v_1-\alpha (g_2-\bar{\mu}v_2),
f_1-\alpha(\lambda f_2-g_2),\bar{u}_1-\bar{\alpha}(\bar{\lambda}\bar{u}_2-\bar{v}_2)$\\
$\bar{u}_2\bar{f}_1-\bar{v}_1\bar{u}_2+\bar{\alpha}(\bar{v}_2-\bar{\lambda}\bar{u}_2)(\bar{f}_2-\mu\bar{u}_2),
v_1g_2-g_2f_1+\alpha(g_2-\bar{\mu}v_2)(\lambda f_2-g_2)\in H^{\infty}.$
\item There exist constants $\lambda,\mu,\alpha(\neq 0)$ such that\\
$\bar{u}_1-\bar{\lambda}\bar{f}_1, v_1-\lambda g_1,g_1-\bar{\mu}f_1,\bar{v}_1-\mu \bar{u}_1,$\\
$\bar{f}_1-\bar{\alpha}(\bar{f}_2-\mu \bar{u}_2),g_1-\alpha (g_2-\bar{\mu}v_2),
f_1-\alpha (f_2-\lambda g_2),\bar{u}_1-\bar{\alpha}(\bar{u}_2-\bar{\lambda}\bar{v}_2),$\\
$(\bar{f}_1-\bar{v}_1)\bar{u}_2+\bar{\alpha}(\bar{\lambda}\bar{v}_2-\bar{u}_2)(\bar{f}_2-\mu \bar{u}_2),
(v_1-f_1)g_2+\alpha(g_2-\bar{\mu} v_2)(f_2-\lambda g_2)\in H^{\infty}.$
\item There exist constants $\lambda,\mu,\alpha(\neq 0)$ such that\\
$f_2 -\bar{\lambda} g_2,\bar{u}_2 -\lambda \bar{v}_2,g_1-\bar{\mu}f_1,\bar{v}_1-\mu \bar{u}_1,$\\
$\lambda \bar{f}_1-\bar{u}_1-\bar{\alpha}(\bar{f}_2-\mu\bar{u}_2),\bar{\lambda}g_1-v_1-\alpha(g_2-\bar{\mu}v_2),f_1-\alpha g_2,\bar{u}_1-\bar{\alpha} \bar{v}_2,$\\
$\bar{v}_1\bar{u}_2-\lambda\bar{f}_1\bar{v}_2-\bar{\alpha}\bar{v}_2(\mu \bar{u}_2-\bar{f}_2),
g_1f_2-g_2f_1+(v_1-\bar{\lambda}g_1)g_2+\alpha g_2(g_2-\bar{\mu}v_2)\in H^{\infty}.$
\item There exist constants $\lambda,\mu,\alpha(\neq 0)$ such that\\
$g_2 -\bar{\lambda}f_2,\bar{v}_2 -\lambda \bar{u}_2,g_1-\bar{\mu}f_1,\bar{v}_1-\mu \bar{u}_1,$\\
$\bar{f}_1-\lambda \bar{u}_1-\bar{\alpha}(\bar{f}_2-\mu\bar{u}_2),g_1-\bar{\lambda}v_1-\alpha(g_2-\bar{\mu}v_2),f_1-\alpha f_2,\bar{u}_1-\bar{\alpha}\bar{u}_2,$\\
$\bar{u}_1\bar{v}_2-\bar{v}_1\bar{u}_2+\bar{u}_2(\bar{f}_1-\lambda\bar{u}_1)+\bar{\alpha}\bar{u}_2(\mu\bar{u}_2-\bar{f}_2),
f_1g_2-\bar{\lambda}v_1f_2-\alpha f_2(\bar{\mu}v_2-g_2)\in H^{\infty}.$
\end{enumerate}
\end{theorem}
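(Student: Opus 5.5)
Write $X_1,Y_1,X_2,Y_2$ for the vectors $\brackbinom{V(\bar{f_{1}})_{-}}{g_{1-}},\brackbinom{V(\bar{u}_{1})_{-}}{v_{1-}},\brackbinom{Vf_{2-}}{(\bar{u}_{2})_{-}},\brackbinom{Vg_{2-}}{(\bar{v}_{2})_{-}}$, and $\hat X_2,\hat Y_2,\hat X_1,\hat Y_1$ for the analogous vectors with indices 1 and 2 swapped, so that \eqref{WW} reads
\[
X_1\otimes X_2-Y_1\otimes Y_2=\hat X_2\otimes\hat X_1-\hat Y_2\otimes\hat Y_1 .
\]
I will follow the pattern of the proof of Theorem \ref{mainc1}: first parametrize all solutions of \eqref{WW} whose common value is a rank-one operator, then feed each family into \eqref{1.2} and \eqref{2.1} (equivalently, compare the off-diagonal Hankel entries of $R_{H_1}R_{H_2}$ and $R_{H_2}R_{H_1}$). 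The diagonal entries cause no difficulty: once \eqref{WW} holds, Lemma \ref{k1} gives \eqref{1.1} and \eqref{2.2}.

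\textbf{Step 1: the rank-one structure.} Suppose $X_1\otimes X_2-Y_1\otimes Y_2=a\otimes b\neq 0$. A difference of two rank-$\le 1$ operators has rank one exactly when either the left factors are linearly dependent or the right factors are. This gives a first dichotomy for the left-hand side:
\begin{itemize}
\item[(i)] $Y_1=\lambda X_1$, or $X_1=\lambda Y_1$;
\item[(ii)] $Y_2=\bar\lambda' X_2$, or $X_2=\bar\lambda' Y_2$.
\end{itemize}
The same dichotomy, with a constant $\mu$, applies to the right-hand side. Equating the two rank-one operators then forces their ranges and co-ranges to coincide. This produces a third constant $\alpha\neq0$ linking a combination of $X_1,Y_1$ to a combination of $\hat X_2,\hat Y_2$, and a combination of $X_2,Y_2$ to one of $\hat X_1,\hat Y_1$.

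Each vector identity is then translated into Hankel-symbol statements, using Lemma \ref{key} componentwise together with $H_h=0\iff h\in H^\infty$. For example, $Y_1=\lambda X_1$ gives $\bar u_1-\bar\lambda\bar f_1,\ v_1-\lambda g_1\in H^\infty$. Corollary \ref{odd} is a useful sanity check on the symmetric special cases.

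\textbf{Step 2: enumeration of cases.} On the left there are $2\times2$ choices: a dependence among the left factors or among the right factors, and which vector is the multiple. On the right there are $2\times 2$ more. This yields the $16$ cases (1)–(16), ordered exactly as in the statement. In each case the first four memberships in the list record the two dependences with constants $\lambda,\mu$. The next four record the matching of ranges and co-ranges with constant $\alpha$. Degenerate subcases in which one side collapses to zero are excluded by the rank-one hypothesis, since they belong to Theorem \ref{mainc1}.

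\textbf{Step 3: the off-diagonal equations.} Substitute these relations into \eqref{1.2} and \eqref{2.1}. The tool is \eqref{eq:2m} in the form $H_aT_b+\tilde T_aH_b=H_{ab}$, which lets every term be rewritten as a single Hankel operator. The relations $g_1\equiv\lambda v_1$, $g_2\equiv\mu v_2$, and so on (modulo $H^\infty$) allow each $H_{g}$ or $H_{\bar u}$ to be replaced by a scalar multiple of one coming from a common symbol. Then \eqref{2.1} collapses to $H_{\Phi}=0$ for an explicit quadratic expression $\Phi$, for instance $g_2(v_1-\alpha v_2)+v_2(\alpha\lambda f_2-\mu f_1)$ in case (1). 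Likewise \eqref{1.2} collapses to $H_{\Psi}=0$ for the conjugate-type expression. These give the last two conditions in each item.

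Conversely, these conditions together with the eight linear ones recover \eqref{WW}, \eqref{1.2} and \eqref{2.1}, so Theorem \ref{main2} gives commutativity.

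\textbf{Main obstacle.} I expect Step 3 to be the hard part. The identities $H_aT_b+\tilde T_aH_b=H_{ab}$ apply only when the Toeplitz and Hankel factors share the right symbol. The substitutions must therefore be chosen carefully, adding and subtracting $H^\infty$ terms in the correct slots, so that each of the sixteen cases reduces to a single Hankel symbol rather than to a sum that is not visibly one. I would handle case (1) in full detail, and obtain the others by the symmetries $1\leftrightarrow2$ and $X\leftrightarrow Y$, tracking how $\lambda,\mu$ turn into $\bar\lambda$ or $1/\lambda$.
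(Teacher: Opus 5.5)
Your overall strategy is the paper's own. It has the same ingredients: the dichotomy (L1)--(L4), (R1)--(R4) according to which pair of vectors is linearly dependent, the sixteen combinations, and the constant $\alpha$ obtained by matching range and co-range of the two rank-one operators. It also collapses \eqref{1.2} and \eqref{2.1} to single Hankel operators via \eqref{eq:2m} and \eqref{HT}, and uses Theorem \ref{main2} for the converse. The one step that does not work is your shortcut: treating case (1) in detail and obtaining the other fifteen cases by the symmetries $1\leftrightarrow 2$ and $X\leftrightarrow Y$.

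Here is why the symmetries do not reach those cases.
\begin{itemize}
\item Interchanging $H_1$ and $H_2$ exchanges the two sides of \eqref{WW}. It therefore sends (L$i$)+(R$j$) to (L$j$)+(R$i$), and it fixes case (1).
\item The genuine $X\leftrightarrow Y$ symmetry is conjugation by $V$. It sends $\begin{bmatrix}\begin{smallmatrix} f & u\\ g & v\end{smallmatrix}\end{bmatrix}$ to $\begin{bmatrix}\begin{smallmatrix} \bar v & \bar g\\ \bar u & \bar f\end{smallmatrix}\end{bmatrix}$ and swaps $X$ and $Y$ in all four pairs at once. It takes case (1) only to case (6).
\item Relabeling $\lambda\mapsto 1/\lambda$ on one side alone is not a symmetry, because it misses the subcase in which the multiplied vector vanishes. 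At best it adds cases (2) and (5).
\end{itemize}

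Recall that $X_1,Y_1$ come from the columns of $H_1$ and $\hat X_1,\hat Y_1$ from its rows, and similarly for $H_2$. In the eight cases (3), (4), (7), (8), (9), (10), (13), (14), both linear dependences concern the same operator. For instance, case (9)=(L1)+(R3) imposes $X_1=\lambda Y_1$ and $\hat X_1=\mu\hat Y_1$, both on $H_1$. There $\alpha$ links $Y_1$ with $\bar\mu\hat X_2-\hat Y_2$ and $\hat Y_1$ with $\bar\lambda X_2-Y_2$. This changes which substitutions are available in \eqref{1.2} and \eqref{2.1} and gives differently shaped quadratic symbols.

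No symmetry at your disposal carries case (1) into that class. This includes passing to adjoints via $R_H^*$: that maps case (1) to case (11) but preserves the ``same operator'' class. The row--row cases (11), (12), (15), (16) are also unreachable without the adjoint.

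So you must carry out the Hankel bookkeeping case by case, as the paper does for all fifteen remaining cases in Appendix B. The symmetries can serve only as consistency checks, or to reduce the work to one representative per orbit.
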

\begin{remark}
In fact, Condition (1) in Theorem \ref{mainc2} should be written as:
there exist constants $\lambda,\mu,\alpha(\neq 0)$ such that \\ 
$\bar{f}_1 -\bar{\lambda}\bar{u}_1,g_1 -\lambda v_1,\bar{f}_2 -\bar{\mu}\bar{u}_2,g_2-\mu v_2,$\\
$\bar{u}_1 -\bar{\alpha}\bar{u}_2,v_1-\alpha v_2,\mu f_1 - g_1-\alpha(\lambda f_2 -g_2),
\bar{\mu}\bar{u}_1-\bar{v}_1-\bar{\alpha}(\bar{\lambda}\bar{u}_2-\bar{v}_2),$\\
$g_2(v_1-\alpha v_2)+v_2(\alpha\lambda f_2 -\mu f_1),
\bar{u}_2\bar{f}_1 -\bar{u}_1 \bar{f}_2 +\bar{\mu}\bar{u}_1\bar{u}_2-\bar{v}_1\bar{u}_2+\bar{\alpha}\bar{v}_2\bar{u}_2-\bar{\lambda}\bar{\alpha}\bar{u}^{2}_2\in H^{\infty},$
 $\brackbinom{V(\bar{u}_{1})_{-}}{v_{1-}}\neq 0,\bar{\lambda}\brackbinom{Vf_{2-}}{(\bar{u}_{2})_{-}}\neq \brackbinom{Vg_{2-}}{(\bar{v}_{2})_{-}},$ and 
$\brackbinom{V(\bar{u}_{2})_{-}}{v_{2-}}\neq 0,\bar{\mu}\brackbinom{Vf_{1-}}{(\bar{u}_{1})_{-}}\neq \brackbinom{Vg_{1-}}{(\bar{v}_{1})_{-}}.$
However, for the sake of conciseness, we omit 
 $\brackbinom{V(\bar{u}_{1})_{-}}{v_{1-}}\neq 0,\bar{\lambda}\brackbinom{Vf_{2-}}{(\bar{u}_{2})_{-}}\neq \brackbinom{Vg_{2-}}{(\bar{v}_{2})_{-}},$ and 
$\brackbinom{V(\bar{u}_{2})_{-}}{v_{2-}}\neq 0,\bar{\mu}\brackbinom{Vf_{1-}}{(\bar{u}_{1})_{-}}\neq \brackbinom{Vg_{1-}}{(\bar{v}_{1})_{-}},$
as this condition is implied by the rank-1 assumption. The other conditions are similar.
\end{remark}
\begin{proof}
Assume  both sides of \eqref{WW} are rank one operators.
$\brackbinom{V(\bar{f_{1}})_{-}}{g_{1-}}\otimes \brackbinom{Vf_{2-}}{(\bar{u}_{2})_{-}}
-\brackbinom{V(\bar{u}_{1})_{-}}{v_{1-}}\otimes \brackbinom{Vg_{2-}}{(\bar{v}_{2})_{-}}$  has rank one if and only if 
one of the following statements holds.
\begin{enumerate}[label=(L\arabic*)]
	\item $\brackbinom{V(\bar{f_{1}})_{-}}{g_{1-}}=\lambda \brackbinom{V(\bar{u}_{1})_{-}}{v_{1-}},\brackbinom{V(\bar{u}_{1})_{-}}{v_{1-}}\neq 0,\bar{\lambda}\brackbinom{Vf_{2-}}{(\bar{u}_{2})_{-}}\neq \brackbinom{Vg_{2-}}{(\bar{v}_{2})_{-}};$
	\item $\brackbinom{V(\bar{u}_{1})_{-}}{v_{1-}}=\lambda \brackbinom{V(\bar{f_{1}})_{-}}{g_{1-}},\brackbinom{V(\bar{f_{1}})_{-}}{g_{1-}}\neq 0,\bar{\lambda}\brackbinom{Vg_{2-}}{(\bar{v}_{2})_{-}}\neq \brackbinom{Vf_{2-}}{(\bar{u}_{2})_{-}};$
	\item $\brackbinom{Vf_{2-}}{(\bar{u}_{2})_{-}}=\lambda \brackbinom{Vg_{2-}}{(\bar{v}_{2})_{-}},\brackbinom{Vg_{2-}}{(\bar{v}_{2})_{-}}\neq 0,\bar{\lambda}\brackbinom{V(\bar{f_{1}})_{-}}{g_{1-}}\neq \brackbinom{V(\bar{u}_{1})_{-}}{v_{1-}};$
	\item $\brackbinom{Vg_{2-}}{(\bar{v}_{2})_{-}}=\lambda \brackbinom{Vf_{2-}}{(\bar{u}_{2})_{-}},\brackbinom{Vf_{2-}}{(\bar{u}_{2})_{-}}\neq 0,\bar{\lambda}\brackbinom{V(\bar{u}_{1})_{-}}{v_{1-}}\neq \brackbinom{V(\bar{f_{1}})_{-}}{g_{1-}}.$
\end{enumerate}
Similarly,
$\brackbinom{V(\bar{f_{2}})_{-}}{g_{2-}}\otimes \brackbinom{Vf_{1-}}{(\bar{u}_{1})_{-}}
-\brackbinom{V(\bar{u}_{2})_{-}}{v_{2-}}\otimes \brackbinom{Vg_{1-}}{(\bar{v}_{1})_{-}}$ has rank one if and only if 
one of the following statements holds.
\begin{enumerate}[label=(R\arabic*)]
	\item $\brackbinom{V(\bar{f_{2}})_{-}}{g_{2-}}=\mu \brackbinom{V(\bar{u}_{2})_{-}}{v_{2-}},\brackbinom{V(\bar{u}_{2})_{-}}{v_{2-}}\neq 0,\bar{\mu}\brackbinom{Vf_{1-}}{(\bar{u}_{1})_{-}}\neq \brackbinom{Vg_{1-}}{(\bar{v}_{1})_{-}};$
	\item $\brackbinom{V(\bar{u}_{2})_{-}}{v_{2-}}=\mu \brackbinom{V(\bar{f_{2}})_{-}}{g_{2-}},\brackbinom{V(\bar{f_{2}})_{-}}{g_{2-}}\neq 0,\bar{\mu}\brackbinom{Vg_{1-}}{(\bar{v}_{1})_{-}}\neq \brackbinom{Vf_{1-}}{(\bar{u}_{1})_{-}};$
	\item $\brackbinom{Vf_{1-}}{(\bar{u}_{1})_{-}}=\mu \brackbinom{Vg_{1-}}{(\bar{v}_{1})_{-}},\brackbinom{Vg_{1-}}{(\bar{v}_{1})_{-}}\neq 0,\bar{\mu}\brackbinom{V(\bar{f_{2}})_{-}}{g_{2-}}\neq \brackbinom{V(\bar{u}_{2})_{-}}{v_{2-}};$
	\item $\brackbinom{Vg_{1-}}{(\bar{v}_{1})_{-}}=\mu \brackbinom{Vf_{1-}}{(\bar{u}_{1})_{-}},\brackbinom{Vf_{1-}}{(\bar{u}_{1})_{-}}\neq 0,\bar{\mu}\brackbinom{V(\bar{u}_{2})_{-}}{v_{2-}}\neq \brackbinom{V(\bar{f_{2}})_{-}}{g_{2-}}.$
\end{enumerate}

Combining the conditions of (L1) to (L4) with those of (R1) to (R4) yields the following table.
For example, we denote the combination of condition (L1) and condition (R1) as (L1)+(R1). 
The following analysis shows that this case corresponds to  (1) in the conclusion.
Here we only verify several representative cases, and the full verification is provided in Appendix II.
\renewcommand{\arraystretch}{1.2}
\begin{center}
\begin{tabular}{|c|c|c|c|c|c|}
	\hline
	 & (L1)                               & (L2)           & (L3)         & (L4) \\ 
	\hline
	(R1)                              & (1)           & (2)          & (3)         & (4)\\ 
	\hline
	(R2)                           & (5)           & (6)          & (7)         & (8)   \\ 
	\hline
	(R3)                             &  (9)          & (10)         & (11)         & (12)   \\ 
	\hline
	(R4)                               &  (13)        &  (14)         & (15)         & (16)   \\ 
	\hline
	\end{tabular}
\end{center}

In case (L1)+(R1). 
Substituting $\brackbinom{V(\bar{f_{1}})_{-}}{g_{1-}}=\lambda \brackbinom{V(\bar{u}_{1})_{-}}{v_{1-}}
$ and $\brackbinom{V(\bar{f_{2}})_{-}}{g_{2-}}=\mu \brackbinom{V(\bar{u}_{2})_{-}}{v_{2-}}$ into \eqref{WW} 
shows that there exists a nonzero constant $\alpha$ such that\\
$\brackbinom{V(\bar{u}_{1})_{-}}{v_{1-}}=\alpha \brackbinom{V(\bar{u}_{2})_{-}}{v_{2-}},
\bar{\mu}\brackbinom{Vf_{1-}}{(\bar{u}_{1})_{-}}-\brackbinom{Vg_{1-}}{(\bar{v}_{1})_{-}}
=\bar{\alpha}\big(\bar{\lambda}\brackbinom{Vf_{2-}}{(\bar{u}_{2})_{-}}-\brackbinom{Vg_{2-}}{(\bar{v}_{2})_{-}}\big).$ Thus, 
\begin{align}
&\bar{f}_1 -\bar{\lambda}\bar{u}_1\in H^{\infty};\label{7}\\
&g_1 -\lambda v_1\in H^{\infty};\label{8}\\
&\bar{f}_2 -\bar{\mu}\bar{u}_2\in H^{\infty};\label{9}\\
&g_2-\mu v_2\in H^{\infty};\label{10}\\
&\bar{u}_1 -\bar{\alpha}\bar{u}_2\in H^{\infty};\label{11}\\
&v_1-\alpha v_2\in H^{\infty};\label{12}\\
&\mu f_1 - g_1-\alpha(\lambda f_2 -g_2)\in H^{\infty};\label{13}\\
&\bar{\mu}\bar{u}_1-\bar{v}_1-\bar{\alpha}(\bar{\lambda}\bar{u}_2-\bar{v}_2)\in H^{\infty}.\label{14}
\end{align}

Substituting the above conditions into  \eqref{1.2} and \eqref{2.1}, respectively, we obtain
\begin{align*}
&H_{\bar{u}_{2}}T_{\bar{f}_{1}}+\tilde{T}_{\bar{v}_{2}}H_{\bar{u}_{1}}-H_{\bar{u}_{1}}T_{\bar{f}_{2}}-\tilde{T}_{\bar{v}_{1}}H_{\bar{u}_{2}}\\
=&H_{\bar{u}_2\bar{f}_1}-\tilde{T}_{\bar{u}_2}H_{\bar{f}_1}
+\tilde{T}_{\bar{v}_{2}}H_{\bar{u}_{1}}-H_{\bar{u}_1\bar{f}_2}+\tilde{T}_{\bar{u}_1}H_{\bar{f}_2}-\tilde{T}_{\bar{v}_{1}}H_{\bar{u}_{2}}   
&\text{(using \eqref{eq:2m})}\\
=&H_{\bar{u}_2\bar{f}_1-\bar{u}_1\bar{f}_2}-\tilde{T}_{\bar{u}_2}H_{\bar{\lambda}\bar{u}_1}+\tilde{T}_{\bar{v}_{2}}H_{\bar{u}_{1}}
+\tilde{T}_{\bar{u}_1}H_{\bar{\mu}\bar{u}_2}-\tilde{T}_{\bar{v}_{1}}H_{\bar{u}_{2}}  &\text{(using \eqref{7} and \eqref{9})}\\
=&H_{\bar{u}_2\bar{f}_1-\bar{u}_1\bar{f}_2}+\tilde{T}_{\bar{v}_{2}-\bar{\lambda}\bar{u}_2}H_{\bar{u}_1}+
\tilde{T}_{\bar{\mu}\bar{u}_1-\bar{v}_{1}}H_{\bar{u}_{2}}\\
=&H_{\bar{u}_2\bar{f}_1-\bar{u}_1\bar{f}_2}
+H_{(\bar{v}_{2}-\bar{\lambda}\bar{u}_2)\bar{u}_1}-H_{\bar{v}_{2}-\bar{\lambda}\bar{u}_2}T_{\bar{u}_1}
+H_{(\bar{\mu}\bar{u}_1-\bar{v}_{1})\bar{u}_{2}}-H_{\bar{\mu}\bar{u}_1-\bar{v}_{1}}T_{\bar{u}_{2}}  &\text{(using \eqref{eq:2m})}\\
=&H_{\bar{u}_2\bar{f}_1-\bar{u}_1\bar{f}_2+(\bar{v}_{2}-\bar{\lambda}\bar{u}_2)\bar{u}_1+(\bar{\mu}\bar{u}_1-\bar{v}_{1})\bar{u}_{2}}
-H_{\bar{v}_{2}-\bar{\lambda}\bar{u}_2}T_{\bar{u}_1}
-H_{\bar{\mu}\bar{u}_1-\bar{v}_{1}}T_{\bar{u}_{2}}\\
=&H_{\bar{u}_2\bar{f}_1-\bar{u}_1\bar{f}_2+(\bar{v}_{2}-\bar{\lambda}\bar{u}_2)\bar{u}_1+(\bar{\mu}\bar{u}_1-\bar{v}_{1})\bar{u}_{2}}
-H_{\bar{v}_{2}-\bar{\lambda}\bar{u}_2}T_{\bar{u}_1}
+H_{\bar{\alpha}(\bar{v}_{2}-\bar{\lambda}\bar{u}_2)}T_{\bar{u}_{2}} &\text{(using \eqref{14})}\\
=&H_{\bar{u}_2\bar{f}_1-\bar{u}_1\bar{f}_2+(\bar{v}_{2}-\bar{\lambda}\bar{u}_2)\bar{u}_1+(\bar{\mu}\bar{u}_1-\bar{v}_{1})\bar{u}_{2}}
+H_{\bar{v}_{2}-\bar{\lambda}\bar{u}_2}T_{\bar{\alpha}\bar{u}_{2}-\bar{u}_1}\\
=&H_{\bar{u}_2\bar{f}_1-\bar{u}_1\bar{f}_2+(\bar{v}_{2}-\bar{\lambda}\bar{u}_2)\bar{u}_1+(\bar{\mu}\bar{u}_1-\bar{v}_{1})\bar{u}_{2}}
+H_{(\bar{v}_{2}-\bar{\lambda}\bar{u}_2)(\bar{\alpha}\bar{u}_{2}-\bar{u}_1)} &\text{(using \eqref{HT} and \eqref{11})}\\
=&H_{\bar{u}_2\bar{f}_1-\bar{u}_1\bar{f}_2+(\bar{v}_{2}-\bar{\lambda}\bar{u}_2)\bar{u}_1+(\bar{\mu}\bar{u}_1-\bar{v}_{1})\bar{u}_{2}
+(\bar{v}_{2}-\bar{\lambda}\bar{u}_2)(\bar{\alpha}\bar{u}_{2}-\bar{u}_1)}\\
=&H_{\bar{u}_2\bar{f}_1 -\bar{u}_1 \bar{f}_2 +\bar{\mu}\bar{u}_1\bar{u}_2-\bar{v}_1\bar{u}_2+\bar{\alpha}\bar{v}_2\bar{u}_2
-\bar{\lambda}\bar{\alpha}\bar{u}^{2}_2},
\end{align*}
and 
\begin{align*}
&H_{g_{1}}T_{f_{2}}+\tilde{T}_{v_{1}}H_{g_{2}}-H_{g_{2}}T_{f_{1}}-\tilde{T}_{v_{2}}H_{g_{1}}\\
=&H_{\lambda v_{1}}T_{f_{2}}+\tilde{T}_{v_{1}}H_{g_{2}}-H_{\mu v_{2}}T_{f_{1}}-\tilde{T}_{v_{2}}H_{g_{1}}
\  \  &\text{(using \eqref{8} and \eqref{10})}\\
=&H_{\lambda v_{1}f_{2}}-\tilde{T}_{\lambda v_{1}}H_{f_{2}}+\tilde{T}_{v_{1}}H_{g_{2}}-H_{\mu v_2 f_1}+
\tilde{T}_{\mu v_{2}}H_{f_{1}}-\tilde{T}_{v_{2}}H_{g_{1}}\\
=&H_{\lambda v_{1}f_{2}-\mu v_2 f_1}+\tilde{T}_{v_{1}}H_{g_{2}-\lambda f_2}-\tilde{T}_{ v_{2}}H_{g_1 -\mu f_{1}}\\
=&H_{\lambda v_{1}f_{2}-\mu v_2 f_1}+\tilde{T}_{v_{1}}H_{g_{2}-\lambda f_2}-\tilde{T}_{ v_{2}}H_{\alpha(g_2 -\lambda f_{2})}
\  \  &\text{(using \eqref{13}) }\\
=&H_{\lambda v_{1}f_{2}-\mu v_2 f_1}+\tilde{T}_{v_{1}-\alpha v_2}H_{g_{2}-\lambda f_2}\\
=&H_{\lambda v_{1}f_{2}-\mu v_2 f_1+(v_{1}-\alpha v_2)(g_{2}-\lambda f_2)}\  \  
\ \  &\text{(using \eqref{12} and \eqref{HT})}\\
=&H_{g_2(v_1-\alpha v_2)+v_2(\alpha \lambda f_2-\mu f_1)}.
\end{align*}
By Theorem \ref{main2}, we get condition (1) in the conclusion.
\end{proof}
\begin{theorem}\label{R2}
Assume  both sides of \eqref{WW} are rank two operators, $R_{H_{1}}R_{H_{2}}=R_{H_{2}}R_{H_{1}}$ if and only if
there exist constants $a,b,c,d$ such that
\begin{align*}
&\bar{f}_1 -\bar{a}\bar{f}_2 -\bar{b}\bar{u}_2,\ \
g_1 -a g_2 -b v_2,\ \
\bar{u}_1 -\bar{c}\bar{f}_2 -\bar{d}\bar{u}_2,\ \
v_1 -c g_2 - dv_2,\\
&f_1 -a f_2+ cg_2,\ \
\bar{u}_1 -\bar{a}\bar{u}_2+\bar{c}\bar{v}_2,\ \
g_1 +bf_2 - dg_2,\  \
\bar{v}_1+\bar{b}\bar{u}_2-\bar{d}\bar{v}_2,\\
&\bar{u}_2\bar{f}_1 -\bar{v}_1 \bar{u}_2 -\bar{a}\bar{u}_2 \bar{f}_2 -\bar{b}\bar{u}^{2}_2 +\bar{c}\bar{v}_2\bar{f}_2+\bar{d}\bar{v}_2\bar{u}_2,\\
&v_1 g_2 - f_1 g_2 +ag_2 f_2 +bv_2 f_2-cg_2^{2}-dv_2 g_2 \in H^{\infty}.
\end{align*}
\end{theorem}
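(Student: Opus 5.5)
The plan mirrors the proofs of Theorems \ref{mainc1} and \ref{mainc2}. First we solve the rank-two case of \eqref{WW}, and then we feed the resulting relations into \eqref{1.2} and \eqref{2.1}.

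Write
\[
X_1=\brackbinom{V(\bar{f_{1}})_{-}}{g_{1-}},\quad Y_1=\brackbinom{V(\bar{u}_{1})_{-}}{v_{1-}},\quad X_2=\brackbinom{V(\bar{f_{2}})_{-}}{g_{2-}},\quad Y_2=\brackbinom{V(\bar{u}_{2})_{-}}{v_{2-}},
\]
\[
P_i=\brackbinom{Vf_{i-}}{(\bar{u}_{i})_{-}},\quad Q_i=\brackbinom{Vg_{i-}}{(\bar{v}_{i})_{-}} .
\]
Then \eqref{WW} reads
\[
X_1\otimes P_2-Y_1\otimes Q_2=X_2\otimes P_1-Y_2\otimes Q_1 .
\]
If the left side has rank two, then $X_1,Y_1$ are linearly independent, and so are $P_2,Q_2$; the same holds for $X_2,Y_2$ and $P_1,Q_1$. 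Comparing ranges, the span of $X_1,Y_1$ equals the span of $X_2,Y_2$. So we can write
\[
X_1=aX_2+bY_2,\qquad Y_1=cX_2+dY_2
\]
for constants $a,b,c,d$. Substituting and using the independence of $X_2,Y_2$, we compare the coefficients of $X_2$ and $Y_2$ in the rank-one expansions, which forces
\[
P_1=\bar{a}P_2-\bar{c}Q_2,\qquad Q_1=-\bar{b}P_2+\bar{d}Q_2 .
\]
Conjugates appear because $\otimes$ is antilinear in its second slot. Reading off components via the definition of $V$ (note $VP_{-}h$ vanishes exactly when $\bar h\in H^\infty$), these four vector identities give the first eight membership conditions of the statement:
\[
\bar f_1-\bar a\bar f_2-\bar b\bar u_2,\ \ g_1-ag_2-bv_2,\ \ \bar u_1-\bar c\bar f_2-\bar d\bar u_2,\ \ v_1-cg_2-dv_2,
\]
\[
f_1-af_2+cg_2,\ \ \bar u_1-\bar a\bar u_2+\bar c\bar v_2,\ \ g_1+bf_2-dg_2,\ \ \bar v_1+\bar b\bar u_2-\bar d\bar v_2\in H^\infty .
\]
Conversely, these eight conditions make \eqref{WW} hold identically, by bilinearity.

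Next, we substitute into \eqref{2.1}. We replace $H_{g_1}$ by $aH_{g_2}+bH_{v_2}$ and $H_{f_1}$ by $aH_{f_2}-cH_{g_2}$, and $\tilde T_{v_1}$ is written as $\tilde T_{cg_2+dv_2}$ plus a part we can control. Then we repeatedly apply \eqref{eq:2m} to convert each $H_{\cdot}T_{\cdot}$ into $H_{\cdot\cdot}-\tilde T_{\cdot}H_{\cdot}$, exactly as in case (L1)+(R1) of Theorem \ref{mainc2}. The $\tilde T H$ terms should cancel or combine, via the eight relations and \eqref{HT}, into a single Hankel operator. The target is
\[
H_{v_1g_2-f_1g_2+ag_2f_2+bv_2f_2-cg_2^2-dv_2g_2}.
\]
By symmetry under conjugation and the roles of $u,f$, \eqref{1.2} similarly collapses to
\[
H_{\bar u_2\bar f_1-\bar v_1\bar u_2-\bar a\bar u_2\bar f_2-\bar b\bar u_2^2+\bar c\bar v_2\bar f_2+\bar d\bar v_2\bar u_2}.
\]
Theorem \ref{main2} then says commutativity holds iff these two symbols lie in $H^\infty$. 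This yields the last two conditions, and the converse direction is the same chain of equalities read backward.

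The main obstacle is the bookkeeping in the collapse of \eqref{1.2} and \eqref{2.1}. The leftover terms $\tilde T_{\phi}H_{\psi}$, with $\phi\notin H^\infty$, must be paired so that each pair has the form $\tilde T_{\phi}H_{h}$ with $h\in H^\infty$, or $\tilde T_{h}H_{\psi}$ with $h$ analytic, so that \eqref{HT} applies. I would first try grouping the terms by the coefficients $a,b,c,d$ separately, using the eight relations in the linear forms $g_1-ag_2-bv_2$ and $f_1-af_2+cg_2$. If the coefficient groups fail to close individually, I would instead express everything in terms of $g_2,v_2,f_2$ plus analytic remainders and expand.
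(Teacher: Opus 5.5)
Your proposal is correct and follows the paper's proof essentially step for step: your range argument yields \eqref{abcd} (which the paper simply asserts), comparing coefficients gives \eqref{babcd} and the eight memberships, and \eqref{2.1} collapses by grouping terms according to $a,b,c,d$ and applying \eqref{eq:2m}; the one point to get right is to use $g_1+bf_2-dg_2\in H^\infty$ (rather than $g_1-ag_2-bv_2$) for the factor $H_{g_1}$ inside $\tilde{T}_{v_2}H_{g_1}$, after which every group closes exactly and your fallback is unnecessary. Two small remarks: $VP_-h=0$ iff $h\in H^\infty$ (not $\bar h$), although your listed memberships are the correct ones; and your symmetry shortcut for \eqref{1.2} is valid once made precise as $H_i\mapsto H_i^*$ (that is, $f\mapsto\bar f$, $g\leftrightarrow\bar u$, $v\mapsto\bar v$ with indices fixed, not an exchange of $u$ and $f$), which sends \eqref{2.1} to $-$\eqref{1.2}, permutes the eight relations, and replaces $(a,b,c,d)$ by $(\bar a,-\bar c,-\bar b,\bar d)$, reproducing exactly the stated symbol, whereas the paper computes \eqref{1.2} directly.
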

\begin{proof}
If  both sides of \eqref{WW} are rank two operators,
then there are constants $a,b,c$ and $d$, such that 
\begin{align}\label{abcd}
\brackbinom{V(\bar{f_{1}})_{-}}{g_{1-}}=a\brackbinom{V(\bar{f_{2}})_{-}}{g_{2-}}+b\brackbinom{V(\bar{u}_{2})_{-}}{v_{2-}},\quad 
\brackbinom{V(\bar{u}_{1})_{-}}{v_{1-}}=c\brackbinom{V(\bar{f_{2}})_{-}}{g_{2-}}+d\brackbinom{V(\bar{u}_{2})_{-}}{v_{2-}}.
\end{align}
Substituting \eqref{abcd} into  \eqref{WW}, we obtain
\begin{align}\label{babcd}
\brackbinom{Vf_{1-}}{(\bar{u}_{1})_{-}}=\bar{a}\brackbinom{Vf_{2-}}{(\bar{u}_{2})_{-}}-\bar{c}\brackbinom{Vg_{2-}}{(\bar{v}_{2})_{-}},\quad
\brackbinom{Vg_{1-}}{(\bar{v}_{1})_{-}}=-\bar{b}\brackbinom{Vf_{2-}}{(\bar{u}_{2})_{-}}+\bar{d}\brackbinom{Vg_{2-}}{(\bar{v}_{2})_{-}}.	
\end{align}
By \eqref{abcd} and \eqref{babcd}, we have
\begin{align}
	&\bar{f}_1 -\bar{a}\bar{f}_2 -\bar{b}\bar{u}_2\in H^{\infty},\label{a1}\\
	&g_1 -a g_2 -b v_2\in H^{\infty},\label{a2}\\
	&\bar{u}_1 -\bar{c}\bar{f}_2 -\bar{d}\bar{u}_2\in H^{\infty},\label{a3}\\
	&v_1 -c g_2 - dv_2\in H^{\infty},\label{a4}\\
	&f_1 -a f_2+ cg_2\in H^{\infty},\label{a5}\\
	&\bar{u}_1 -\bar{a}\bar{u}_2+\bar{c}\bar{v}_2\in H^{\infty},\label{a6}\\
	&g_1 +bf_2 - dg_2\in H^{\infty},\label{a7}\\
	&\bar{v}_1+\bar{b}\bar{u}_2-\bar{d}\bar{v}_2\in H^{\infty}.\label{a8}
\end{align}
Substituting the above conditions into  \eqref{1.2} and \eqref{2.1}, respectively, we obtain
\begin{align*}
&H_{\bar{u}_{2}}T_{\bar{f}_{1}}+\tilde{T}_{\bar{v}_{2}}H_{\bar{u}_{1}}-H_{\bar{u}_{1}}T_{\bar{f}_{2}}-\tilde{T}_{\bar{v}_{1}}H_{\bar{u}_{2}}\\
=&H_{\bar{u}_2\bar{f}_1}-\tilde{T}_{\bar{u}_{2}}H_{\bar{f}_{1}}+\tilde{T}_{\bar{v}_{2}}H_{\bar{u}_{1}}-H_{\bar{u}_{1}}T_{\bar{f}_{2}}
-H_{\bar{v}_{1}\bar{u}_{2}}+H_{\bar{v}_{1}}T_{\bar{u}_{2}} &\text{(using \eqref{eq:2m})}\\
=&H_{\bar{u}_2\bar{f}_1-\bar{v}_{1}\bar{u}_{2}}-\tilde{T}_{\bar{u}_{2}}H_{\bar{a}\bar{f}_2+\bar{b}\bar{u}_2}
+\tilde{T}_{\bar{v}_{2}}H_{\bar{c}\bar{f}_2+\bar{d}\bar{u}_2}-H_{\bar{a}\bar{u}_2-\bar{c}\bar{v}_2}T_{\bar{f}_{2}}
+H_{-\bar{b}\bar{u}_2+\bar{d}\bar{v}_2}T_{\bar{u}_{2}}\\
 &\hspace{15em}\text{(using  \eqref{a1},\eqref{a3},\eqref{a6},\eqref{a8})}\\
=&H_{\bar{u}_2\bar{f}_1-\bar{v}_{1}\bar{u}_{2}}-\bar{a}(\tilde{T}_{\bar{u}_{2}}H_{\bar{f}_2}+H_{\bar{u}_{2}}T_{\bar{f}_2})
-\bar{b}(\tilde{T}_{\bar{u}_{2}}H_{\bar{u}_2}+H_{\bar{u}_{2}}T_{\bar{u}_2})\\
&+\bar{c}(\tilde{T}_{\bar{v}_2}H_{\bar{f}_2}+H_{\bar{v}_2}T_{\bar{f}_2})
+\bar{d}(\tilde{T}_{\bar{v}_2}H_{\bar{u}_2}+H_{\bar{v}_2}T_{\bar{u}_2})\\
=&H_{\bar{u}_2\bar{f}_1-\bar{v}_{1}\bar{u}_{2}}-\bar{a}H_{\bar{u}_{2}\bar{f}_2}
-\bar{b}H_{\bar{u}^{2}_{2}}+\bar{c}H_{\bar{v}_2\bar{f}_2}+\bar{d}H_{\bar{v}_2\bar{u}_2} & \text{(using \eqref{eq:2m})}\\
=&H_{\bar{u}_2\bar{f}_1-\bar{v}_{1}\bar{u}_{2}-\bar{a}\bar{u}_{2}\bar{f}_2-\bar{b}\bar{u}^{2}_{2}
+\bar{c}\bar{v}_2\bar{f}_2+\bar{d}\bar{v}_2\bar{u}_2},
\end{align*}
and
\begin{align*}
&H_{g_{1}}T_{f_{2}}+\tilde{T}_{v_{1}}H_{g_{2}}-H_{g_{2}}T_{f_{1}}-\tilde{T}_{v_{2}}H_{g_{1}}\\
=&H_{g_{1}}T_{f_{2}}+H_{v_{1}g_2}-H_{v_1}T_{g_2}-H_{f_1 g_2}+\tilde{T}_{g_{2}}H_{f_{1}}-\tilde{T}_{v_{2}}H_{g_{1}} &\text{(using \eqref{eq:2m})}\\
=&H_{v_{1}g_2-f_1 g_2}+H_{g_{1}}T_{f_{2}}-H_{v_1}T_{g_2}+\tilde{T}_{g_{2}}H_{f_{1}}-\tilde{T}_{v_{2}}H_{g_{1}}\\
=&H_{v_{1}g_2-f_1 g_2}+H_{ag_{2}+bv_2}T_{f_{2}}-H_{cg_2+dv_2}T_{g_2}+\tilde{T}_{g_{2}}H_{af_2-cg_2}-\tilde{T}_{v_{2}}H_{dg_2-bf_2}\\
&\hspace{9em}\text{(using \eqref{a2},\eqref{a4},\eqref{a5},\eqref{a7})}\\
=&H_{v_{1}g_2-f_1 g_2}+a(H_{g_{2}}T_{f_{2}}+\tilde{T}_{g_{2}}H_{f_2})+b(H_{v_2}T_{f_{2}}+\tilde{T}_{v_{2}}H_{f_2})\\
&-c(H_{g_2}T_{g_2}+\tilde{T}_{g_{2}}H_{g_2})-d(H_{v_2}T_{g_2}+\tilde{T}_{v_{2}}H_{g_2})  \\
=&H_{v_{1}g_2-f_1 g_2}+aH_{g_{2}f_{2}}+bH_{v_2f_{2}}-cH_{g^{2}_2}-dH_{v_2g_2} &\text{(using \eqref{eq:2m})}\\
=&H_{v_{1}g_2-f_1 g_2+ag_{2}f_{2}+bv_2f_{2}-cg^{2}_2-dv_2g_2}.
\end{align*}
By Theorem \ref{main2},  the proof is complete.
\end{proof}
\section{Applications}\label{5}
\subsection{Toeplitz+Hankel operators}

$\mathbb{H}_{f}$ denotes the Hankel operator on $H^{2}$ such that
\begin{align}
\mathbb{H}_{f}x=P_{+}(fJ x),\quad  x\in H^{2},
\end{align}
where $J$ is the flip operator on $L^{2}$ defined by
$Jf(z)=\bar{z}f(\bar{z}).$ For $f\in L^{\infty},$ let $\tilde{f}(z)=f(\bar{z})$ and $f^{*}(z)=\overline{f(\bar{z})}.$
\begin{lemma}
Let $T$ is a Toeplitz operator, $\mathbb{H}$ is a Hankel operator, then the operator matrices 
$\begin{bmatrix}T+ \mathbb{H}& 0\\
  0 &  T-\mathbb{H}\end{bmatrix}$
and
$\begin{bmatrix}T&  \mathbb{H}\\
   \mathbb{H} &  T \end{bmatrix}$
are unitarily equivalent via the unitary operator matrix $\frac{1}{\sqrt{2}}\begin{bmatrix}I& I\\
  I & -I\end{bmatrix}.$
\end{lemma}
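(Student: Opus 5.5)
Set $U=\frac{1}{\sqrt{2}}\begin{bmatrix}I& I\\ I & -I\end{bmatrix}$ on $H^2\oplus H^2$ and conjugate the second matrix by it directly. Nothing about $T$ being Toeplitz or $\mathbb{H}$ being Hankel is used; the statement is an identity for any two bounded operators $T,\mathbb{H}$ on $H^2$.

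First, $U$ is unitary. It is self-adjoint, since its entries are real multiples of $I$ and the matrix is symmetric. A one-line block multiplication gives
\[
U^2=\tfrac12\begin{bmatrix}I+I& I-I\\ I-I & I+I\end{bmatrix}=\begin{bmatrix}I&0\\0&I\end{bmatrix}.
\]
Hence $U^{*}=U=U^{-1}$.

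Next, compute $U\begin{bmatrix}T&\mathbb{H}\\ \mathbb{H}&T\end{bmatrix}U^{*}$ in two block multiplications. The first product is
\[
\begin{bmatrix}T&\mathbb{H}\\ \mathbb{H}&T\end{bmatrix}U=\tfrac{1}{\sqrt2}\begin{bmatrix}T+\mathbb{H}& T-\mathbb{H}\\ \mathbb{H}+T & \mathbb{H}-T\end{bmatrix}.
\]
Multiplying on the left by $U$ gives
\[
\tfrac12\begin{bmatrix}2(T+\mathbb{H})& 0\\ 0 & 2(T-\mathbb{H})\end{bmatrix}=\begin{bmatrix}T+\mathbb{H}&0\\0&T-\mathbb{H}\end{bmatrix}.
\]
This is exactly the claimed unitary equivalence.

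There is no real obstacle here. The only points that need care are the order of the factors and the fact that every entry of $U$ is a scalar multiple of $I$. Because of that, the entries of $U$ commute with $T$ and $\mathbb{H}$, so the block arithmetic behaves as it would for scalar $2\times2$ matrices.
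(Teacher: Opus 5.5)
Your proof is correct: $U$ is a self-adjoint involution, hence unitary, and the two block multiplications give $U\begin{bmatrix}T&\mathbb{H}\\ \mathbb{H}&T\end{bmatrix}U^{*}=\begin{bmatrix}T+\mathbb{H}&0\\ 0&T-\mathbb{H}\end{bmatrix}$. The paper states this lemma without proof, and your direct block computation is exactly the routine check it leaves to the reader; as you note, it holds for arbitrary bounded $T,\mathbb{H}$ on $H^2$.
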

\begin{lemma}
If $f,g,u,v\in L^{\infty},$ then 
$\begin{bmatrix}T_{f}& \mathbb{H}_{u}\\
  \mathbb{H}_{g} &  T_{v}\end{bmatrix}$
and
$R_{\begin{bmatrix}
	\begin{smallmatrix}
	f& u\\
	  \tilde{g} & \tilde{v}
	\end{smallmatrix}
	\end{bmatrix}}$
are unitarily equivalent via the unitary operator matrix $\begin{bmatrix}I& 0\\
  0 & J\end{bmatrix}.$
\end{lemma}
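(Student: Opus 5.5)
The plan is to verify the equivalence entrywise, using the matrix form \eqref{M} of $R_H$ with respect to $L^2=H^2\oplus\bar z\overline{H^2}$. Set $U=\begin{bmatrix}I&0\\0&J\end{bmatrix}$, viewed as an operator from $H^2\oplus H^2$ to $H^2\oplus \bar z\overline{H^2}$. Then show that $U^{*}R_{\begin{bmatrix}\begin{smallmatrix} f& u\\ \tilde g&\tilde v\end{smallmatrix}\end{bmatrix}}U$ coincides with $\begin{bmatrix}T_f&\mathbb{H}_u\\ \mathbb{H}_g& T_v\end{bmatrix}$.

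\textbf{Step 1: elementary properties of $J$.}
\begin{enumerate}
\item $J$ is a unitary involution on $L^2$: it is clearly isometric, and $J^2=I$ because $\overline{\bar z}\,\bar z=1$.
\item On the basis, $Jz^n=\bar z^{\,n+1}$ for $n\ge 0$, and $J\bar z^{\,n+1}=z^n$. Hence $J$ maps $H^2$ onto $\bar z\overline{H^2}$ and conversely, which gives $JP_+=P_-J$ and $JP_-=P_+J$. In particular $U$ is unitary.
\item For $h\in L^\infty$, $(J(hx))(z)=\bar z h(\bar z)x(\bar z)=\tilde h(z)(Jx)(z)$, so $JM_h=M_{\tilde h}J$. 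Replacing $h$ by $\tilde h$ and using $\tilde{\tilde h}=h$ also gives $JM_{\tilde h}=M_hJ$.
\end{enumerate}

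\textbf{Step 2: compute the four blocks.} Take $x\in H^2$. The upper-left entry is unchanged and equals $T_f$.

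For the upper-right entry, $Jx\in\bar z\overline{H^2}$, so it is
\[
P_+uP_-Jx=P_+(uJx)=\mathbb{H}_u x.
\]

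For the lower-left entry, applying $J^{*}=J$ after $P_-\tilde gP_+$ gives
\[
JP_-(\tilde g x)=P_+J(\tilde g x)=P_+(gJx)=\mathbb{H}_g x.
\]

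For the lower-right entry,
\[
JP_-(\tilde vJx)=P_+J(\tilde vJx)=P_+(vJ^2x)=P_+(vx)=T_vx.
\]
Here $M_{\tilde v}$ is moved through $J$ using Step 1(3).

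This completes the identification. There is no real obstacle; the only points requiring care are the identity $JM_h=M_{\tilde h}J$ and the direction of the projections $JP_\pm=P_\mp J$, both of which are checked directly on monomials or pointwise.
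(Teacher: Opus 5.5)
Your proof is correct: the identities $JP_{\pm}=P_{\mp}J$ and $JM_h=M_{\tilde h}J$ are exactly what is needed, and they turn the four blocks $T_f$, $H^{*}_{\bar u}J$, $JH_{\tilde g}$, $J\tilde T_{\tilde v}J$ of the conjugated GSIO (with symbol entries $f,u,\tilde g,\tilde v$) into $T_f$, $\mathbb{H}_u$, $\mathbb{H}_g$, $T_v$. The paper states this lemma without proof, and your entrywise verification is the routine computation it leaves to the reader.
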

Hence, 
$\begin{bmatrix}T_f+ \mathbb{H}_g& 0\\
  0 &  T_f-\mathbb{H}_g\end{bmatrix}$ is unitarily equivalent to $R_{\begin{bmatrix}
	\begin{smallmatrix}
	f& g\\
	  \tilde{g} & \tilde{f}
	\end{smallmatrix}
	\end{bmatrix}}.$

\begin{theorem}\label{TH}
Let $f_1,g_1,f_2,g_2\in L^{\infty}.$ The operators
$T_{f_1}+ \mathbb{H}_{g_1}$ and $T_{f_2}+ \mathbb{H}_{g_2}$, and $T_{f_1}- \mathbb{H}_{g_1}$ and $T_{f_2}- \mathbb{H}_{g_2}$
commute simultaneously if and only if one of the following statements holds.
\begin{align*}
	&H_{\bar{g}_{2}}T_{\bar{f}_{1}}+\tilde{T}_{f^{*}_{2}}H_{\bar{g}_{1}}
	-H_{\bar{g}_{1}}T_{\bar{f}_{2}}-\tilde{T}_{f^{*}_{1}}H_{\bar{g}_{2}}=0;\\
	&H_{\tilde{g}_{1}}T_{f_{2}}+\tilde{T}_{\tilde{f}_{1}}H_{\tilde{g}_{2}}-H_{\tilde{g}_{2}}T_{f_{1}}-\tilde{T}_{\tilde{f}_{2}}H_{\tilde{g}_{1}}=0;\\
	&\brackbinom{V(\bar{f_{1}})_{-}}{(\tilde{g}_{1})_{-}}\otimes \brackbinom{V(f_{2})_{-}}{(\bar{g}_{2})_{-}}
	-\brackbinom{V(\bar{g}_{1})_{-}}{(\tilde{f}_{1})_{-}}\otimes \brackbinom{V(\tilde{g}_{2})_{-}}{(f^{*}_{2})_{-}} \nonumber \\
	=&\brackbinom{V(\bar{f_{2}})_{-}}{(\tilde{g}_{2})_{-}}\otimes \brackbinom{V(f_{1})_{-}}{(\bar{g}_{1})_{-}}
	-\brackbinom{V(\bar{g}_{2})_{-}}{(\tilde{f}_{2})_{-}}\otimes \brackbinom{V(\tilde{g}_{1})_{-}}{(f^{*}_{1})_{-}}.
	\end{align*}

\end{theorem}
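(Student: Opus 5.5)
The plan is to reduce simultaneous commutation of the two Toeplitz+Hankel pairs to the commutation of a single pair of GSIOs, and then read off the three equations from Theorem \ref{main2}. (Although the statement says ``one of the following'', the equations appear as a conjunction; we show that all three hold.)

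\textbf{Step 1 (reduction to a direct sum).} The operators $T_{f_1}\pm\mathbb{H}_{g_1}$ and $T_{f_2}\pm\mathbb{H}_{g_2}$ commute simultaneously if and only if the direct sums
\[
A_i=\begin{bmatrix}T_{f_i}+\mathbb{H}_{g_i}&0\\ 0&T_{f_i}-\mathbb{H}_{g_i}\end{bmatrix},\quad i=1,2,
\]
commute. This holds because a product of block-diagonal operators is block diagonal, so the two diagonal entries commute separately.

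\textbf{Step 2 (transport to GSIOs).} By the first lemma of this subsection, one fixed unitary $\frac{1}{\sqrt2}\begin{bmatrix}I&I\\ I&-I\end{bmatrix}$ carries every $A_i$ to $\begin{bmatrix}T_{f_i}&\mathbb{H}_{g_i}\\ \mathbb{H}_{g_i}&T_{f_i}\end{bmatrix}$. By the second lemma, the fixed unitary $\begin{bmatrix}I&0\\0&J\end{bmatrix}$ then carries these to $R_{H_i}$, where
\[
H_i=\begin{bmatrix}f_i&g_i\\ \tilde g_i&\tilde f_i\end{bmatrix}.
\]
Since both unitaries do not depend on $i$, commutation is preserved. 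Hence $A_1A_2=A_2A_1$ if and only if $R_{H_1}R_{H_2}=R_{H_2}R_{H_1}$.

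\textbf{Step 3 (apply Theorem \ref{main2}).} Substitute $u_i=g_i$, $g_i\mapsto\tilde g_i$ and $v_i=\tilde f_i$ into \eqref{1.2}, \eqref{2.1} and \eqref{WW}. This requires the following conjugation identities:
\begin{itemize}
\item $\bar u_i=\bar g_i$;
\item $\bar v_i=\overline{\tilde f_i}=f_i^{*}$, since $\overline{f(\bar z)}=f^*(z)$;
\item $(\bar v_i)_-=(f_i^*)_-$ and $(\bar u_i)_-=(\bar g_i)_-$.
\end{itemize}
After substitution:
\begin{itemize}
\item \eqref{1.2} becomes $H_{\bar g_2}T_{\bar f_1}+\tilde T_{f_2^*}H_{\bar g_1}-H_{\bar g_1}T_{\bar f_2}-\tilde T_{f_1^*}H_{\bar g_2}=0$;
\item \eqref{2.1} becomes $H_{\tilde g_1}T_{f_2}+\tilde T_{\tilde f_1}H_{\tilde g_2}-H_{\tilde g_2}T_{f_1}-\tilde T_{\tilde f_2}H_{\tilde g_1}=0$;
\item in \eqref{WW}, each vector $\brackbinom{V(\bar f_i)_-}{g_{i-}}$ becomes $\brackbinom{V(\bar f_i)_-}{(\tilde g_i)_-}$, each vector $\brackbinom{Vf_{i-}}{(\bar u_i)_-}$ becomes $\brackbinom{V(f_i)_-}{(\bar g_i)_-}$, and similarly for the others.
\end{itemize}
This gives exactly the displayed rank-identity, so the theorem follows.

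The only obstacle is bookkeeping in Step 3: correctly identifying $\bar v_i$ with $f_i^*$ rather than $\bar{\tilde f_i}$ written some other way, and checking that the second lemma's symbol really is $\begin{bmatrix}f&u\\ \tilde g&\tilde v\end{bmatrix}$ with $u=g$ and $v=f$ in the Toeplitz+Hankel block matrix.
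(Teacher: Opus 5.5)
Your argument is correct and follows exactly the route the paper intends. You pass from simultaneous commutation to commutation of the block-diagonal operators, transport them by the two fixed unitaries of the preceding lemmas to $R_{H_i}$ with $H_i=\begin{bmatrix}f_i&g_i\\ \tilde{g}_i&\tilde{f}_i\end{bmatrix}$, and read off the three identities from Theorem \ref{main2} with $u_i=g_i$, $g_i\mapsto\tilde{g}_i$, $v_i=\tilde{f}_i$ (so $\bar{v}_i=f_i^{*}$). You are also right that, as in Theorem \ref{main2}, the three identities must hold jointly, so ``one of the following statements holds'' in the statement is a slip for ``all of the following hold''.
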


In Theorem \ref{TH}, if $g_1=g_2=0,$ then $T_{f_1}$ and $T_{f_2}$ commute if and only if $(\bar{f}_1)_{-}\otimes (f_2)_{-}=(\bar{f}_2)_{-}\otimes (f_1)_{-}.$ 
Thus we obtain the clasical Brown-Halmos theorem.
\begin{theorem}\cite[Theorem 9]{Brown1963}
A necessary and sufficient condition that two Toeplitz operators commute is that either both be analytic, or both be co-analytic, or one be a linear function of the other.
\end{theorem}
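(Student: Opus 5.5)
We specialize Theorem \ref{main2} (equivalently, Theorem \ref{TH} with $g_1=g_2=0$) to Toeplitz operators. We view $T_{f_i}$ as the upper-left corner of the GSIO $R_{H_i}$ with $H_i=\begin{bmatrix}\begin{smallmatrix} f_i & 0\\ 0 & 0\end{smallmatrix}\end{bmatrix}$, so that $R_{H_i}=T_{f_i}\oplus 0$ and $R_{H_1}R_{H_2}=R_{H_2}R_{H_1}$ if and only if $T_{f_1}T_{f_2}=T_{f_2}T_{f_1}$. With $g_i=u_i=v_i=0$, equations \eqref{1.2} and \eqref{2.1} hold trivially. In \eqref{WW} all second components vanish, so by Lemma \ref{key} it reduces to $V(\bar f_1)_-\otimes Vf_{2-}=V(\bar f_2)_-\otimes Vf_{1-}$. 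Since $V$ is an anti-unitary involution, $Vp\otimes Vq=V(p\otimes q)V$, and we obtain the criterion
\[
(\bar f_1)_-\otimes f_{2-}=(\bar f_2)_-\otimes f_{1-}.
\]
Equivalently, by Lemma \ref{k1}, $H^*_{\bar f_1}H_{f_2}=H^*_{\bar f_2}H_{f_1}$, which is what commutativity means via \eqref{eq:1m}.

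Next we solve this rank-at-most-one equation. If both sides vanish, then ($(\bar f_1)_-=0$ or $f_{2-}=0$) and ($(\bar f_2)_-=0$ or $f_{1-}=0$). The four combinations give: $f_1,f_2$ both coanalytic; $f_1,f_2$ both analytic; or $f_1$ coanalytic and analytic, hence constant (resp.\ $f_2$ constant). A constant is trivially a linear function of the other symbol, so these cases are covered by the statement. If both sides have rank one, then $(\bar f_1)_-=\gamma(\bar f_2)_-$ and $f_{2-}=\bar\gamma^{-1} f_{1-}$ for some $\gamma\neq0$, because equal rank-one operators $p\otimes q=p'\otimes q'$ force $p'=\gamma p$ and $q'=\bar\gamma^{-1}q$.

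The main care is in combining these two relations. Taking conjugates, the first relation says the analytic part of $f_1$ (minus constant terms) equals $\bar\gamma$ times that of $f_2$: indeed $(\bar f)_-=\overline{(f_+ - \hat f(0))}$ up to the $\bar z$-index convention, so $f_{1+}-\bar\gamma f_{2+}$ is constant. The second relation says $f_{1-}=\bar\gamma f_{2-}$. Together they give that $f_1-\bar\gamma f_2$ is constant, i.e.\ one symbol is a linear function of the other.

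Conversely, sufficiency is immediate. If both symbols are analytic or both are coanalytic, every term vanishes. If $f_1=c f_2+d$, then $T_{f_1}=cT_{f_2}+dI$ commutes with $T_{f_2}$, and the criterion also holds directly. I expect the only delicate step to be the bookkeeping translating $(\bar f)_-$ into the analytic part of $f$ under $P_-$, which I would verify on Fourier coefficients: $(\bar f)_-=\sum_{n\ge1}\overline{\hat f(n)}\bar z^n$.
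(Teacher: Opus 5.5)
Your proposal is correct and follows the paper's route. The paper specializes Theorem~\ref{TH} with $g_1=g_2=0$, while you specialize Theorem~\ref{main2} to symbols whose only nonzero entry is $f_i$ (so $R_{H_i}=T_{f_i}\oplus 0$); both reach the criterion $(\bar f_1)_-\otimes f_{2-}=(\bar f_2)_-\otimes f_{1-}$, and your zero/rank-one case analysis, including the conjugation step showing $f_1-\bar\gamma f_2$ is constant, correctly fills in what the paper leaves as ``thus we obtain the classical Brown--Halmos theorem.''
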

In Theorem \ref{TH}, if $f_1=f_2=0,$ then $\mathbb{H}_{g_1}$ and $\mathbb{H}_{g_2}$ commute if and only if
$(\tilde{g}_1)_{-}\otimes (\bar{g}_2)_{-}=(\tilde{g}_2)_{-}\otimes (\bar{g}_1)_{-}.$ Thus we get the following theorem.
\begin{theorem}\cite[Theorem 4.4.8]{martinez2007introduction}
Let $g_1,g_2\in L^{\infty},$ and suppose that $\mathbb{H}_{g_2}\neq 0.$ $\mathbb{H}_{g_1}$ and $\mathbb{H}_{g_2}$ commute if and only if 
there exists a complex number $c$ such that $\mathbb{H}_{g_1}=c\mathbb{H}_{g_2}.$
\end{theorem}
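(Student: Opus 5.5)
The plan is to specialize Theorem \ref{TH} to $f_1=f_2=0$ and then solve the resulting rank-one equation by hand.

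\textbf{Reduction.} With $f_1=f_2=0$ we have $(\mathbb{H}_{g_1})(\mathbb{H}_{g_2})=(-\mathbb{H}_{g_1})(-\mathbb{H}_{g_2})$. Hence $\mathbb{H}_{g_1}$ and $\mathbb{H}_{g_2}$ commute if and only if $T_{0}\pm\mathbb{H}_{g_1}$ and $T_0\pm\mathbb{H}_{g_2}$ commute simultaneously, so Theorem \ref{TH} applies.

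\textbf{Simplifying the conditions of Theorem \ref{TH}.} Every term in the first two operator equations contains $T_{0}$, $\tilde{T}_{0}$ or $\tilde T_{f^*_i}$ with $f_i^*=0$, so both equations hold trivially. In the third equation all entries built from $f_1,f_2$ vanish. The left side becomes
\[
\brackbinom{0}{(\tilde g_1)_-}\otimes\brackbinom{0}{(\bar g_2)_-}-\brackbinom{V(\bar g_1)_-}{0}\otimes\brackbinom{V(\tilde g_2)_-}{0},
\]
and the right side is the same expression with indices $1,2$ swapped. Lemma \ref{key} splits this into four component equations. The two mixed components are $0=0$. The $q\otimes s$ component gives
\[
(\tilde g_1)_-\otimes(\bar g_2)_-=(\tilde g_2)_-\otimes(\bar g_1)_-.
\]
The $p\otimes r$ component gives $V(\bar g_1)_-\otimes V(\tilde g_2)_-=V(\bar g_2)_-\otimes V(\tilde g_1)_-$. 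I would check that this second identity is equivalent to the first by taking adjoints and conjugating by the anti-unitary $V$, using $V(x\otimes y)V=Vx\otimes Vy$ up to the order dictated by antilinearity. This recovers the displayed criterion stated just before the theorem.

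\textbf{Solving the rank-one equation.} Next I relate these vectors to $\mathbb{H}_g$. For $x=z^k$ we have $Jx=\bar z^{k+1}$, so $\mathbb{H}_g z^k=P_+(g\bar z^{k+1})$ involves only the Fourier coefficients $\hat g(n)$ with $n\ge1$. The vector $(\tilde g)_-$ has coefficients $\hat g(n)$ at $\bar z^{n}$, and $(\bar g)_-$ has coefficients $\overline{\hat g(n)}$ at $\bar z^{n}$, for $n\ge1$. Thus each of $(\tilde g)_-$, $(\bar g)_-$ vanishes iff $\mathbb{H}_g=0$, and $\mathbb{H}_{g_1}=c\,\mathbb{H}_{g_2}$ iff $\hat g_1(n)=c\,\hat g_2(n)$ for all $n\ge1$.

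Write $a=(\tilde g_1)_-$, $b=(\bar g_2)_-$, $c'=(\tilde g_2)_-$, $d=(\bar g_1)_-$. Since $\mathbb{H}_{g_2}\ne0$, both $b$ and $c'$ are nonzero.
\begin{itemize}
\item If $\mathbb{H}_{g_1}=0$, both sides vanish and $c=0$ works.
\item Otherwise $a\otimes b=c'\otimes d$ is rank one, so there are scalars $\alpha,\beta$ with $a=\alpha c'$ and $d=\beta b$. Coefficientwise these say $\hat g_1(n)=\alpha\hat g_2(n)$ and $\overline{\hat g_1(n)}=\beta\,\overline{\hat g_2(n)}$, which forces $\beta=\bar\alpha$. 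This gives $\mathbb{H}_{g_1}=\alpha\mathbb{H}_{g_2}$.
\end{itemize}
Conversely, if $\mathbb{H}_{g_1}=c\,\mathbb{H}_{g_2}$, then $a=cc'$ and $d=\bar c b$. Hence $a\otimes b=c\,c'\otimes b=c'\otimes\bar c b=c'\otimes d$, and the criterion holds (this direction is also obvious directly).

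\textbf{Main obstacle.} The only delicate point is the bookkeeping: matching the Fourier indices of $(\tilde g)_-$ and $(\bar g)_-$ with the symbol data of $\mathbb{H}_g$, and verifying that the $V$-conjugated component equation is genuinely equivalent to the other one rather than an extra constraint. Everything else is the elementary fact that $a\otimes b=c'\otimes d$ with $b,c'\ne0$ forces proportionality.
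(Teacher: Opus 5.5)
Your proposal is correct and takes the same route as the paper: specialize Theorem \ref{TH} to $f_1=f_2=0$, reduce to $(\tilde g_1)_-\otimes(\bar g_2)_-=(\tilde g_2)_-\otimes(\bar g_1)_-$, and read off proportionality. You also fill in steps the paper leaves implicit: the first two equations vanish trivially; the $V$-conjugated component equation is equivalent to the other one, since in fact $V(x\otimes y)V=Vx\otimes Vy$ holds exactly; and $(\tilde g)_-$, $(\bar g)_-$ are identified with the Fourier data $\hat g(n)$, $n\ge 1$, that determine $\mathbb{H}_g$.
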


\subsection{(Asymmetric) Dual Truncated Toeplitz operators}

To each nonconstant inner function $\theta$, we denote the model space
$K_{\theta}:=H^{2}\ominus \theta H^{2}.$
The operator $P_{+}-M_{\theta}P_{+}M_{\bar{\theta}}$ is the orthogonal projection of $L^{2}$ onto $K^{2}_{\theta}.$
Note that $K^{\bot}_{\theta}=L^{2}\ominus K^{2}_{\theta}=\theta H^{2}\oplus \bar{z}\overline{H^{2}}.$ 

For $\phi$ in $L^{\infty},$ and two nonconstant inner functions $\theta$ and $\alpha,$ 
the asymmetric dual truncated Toeplitz operator $D^{\theta,\alpha}_{\phi}$ is defined by
\begin{align*}
D^{\theta,\alpha}_{\phi}:   K^{\bot}_{\theta} & \longmapsto  K^{\bot}_{\alpha},\\
 x &\longmapsto (P_{-}+\alpha P_{+}\bar{\alpha})(\phi x).
\end{align*}
If $\theta=\alpha,$ then $D^{\theta,\alpha}_{\phi}=D^{\theta}_{\phi}$ is called a dual truncated Toeplitz operator. Note that
\begin{align}\label{DTM}
D^{\theta,\alpha}_{\phi}
&=\begin{bmatrix}M_{\alpha} & 0\\
  0 &  I_{(H^{2})^{\perp}}\end{bmatrix}
\begin{bmatrix}T_{\bar{\alpha}\theta\phi} & H^{*}_{\bar{\phi}\alpha}\\
H_{\phi\theta} &  \tilde{T}_{\phi}\end{bmatrix}
\begin{bmatrix}M_{\bar{\theta}} & 0\\
  0 &  I_{(H^{2})^{\perp}}\end{bmatrix},
\end{align}
where
\begin{align*}
\begin{bmatrix}M_{\bar{\theta}} & 0\\
  0 &  I_{(H^{2})^{\perp}}\end{bmatrix}:\theta H^{2}\oplus (H^{2})^{\perp}\rightarrow L^{2}=H^{2}\oplus (H^{2})^{\perp}
\end{align*}
and
\begin{align*}
\begin{bmatrix}M_{\alpha} & 0\\
  0 &  I_{(H^{2})^{\perp}}\end{bmatrix}: L^{2}=H^{2}\oplus (H^{2})^{\perp}\rightarrow \alpha H^{2}\oplus (H^{2})^{\perp}
\end{align*}
are unitary \cite{sang2023algebras}.

We will establish the necessary and sufficient conditions for 
the product of two asymmetric dual truncated Toeplitz operators being a asymmetric dual truncated Toeplitz operator.

\begin{theorem}\label{ADTP}
Let $\phi,\psi,\sigma\in L^{\infty},$ and let $\alpha,\beta,\theta$ be three non-constant inner functions.
Then $D^{\alpha,\beta}_{\psi} D^{\theta,\alpha}_{\phi}=D^{\theta,\beta}_{\sigma}$ if and only if
one of the following statements holds.
\begin{enumerate}
\item $\psi,\bar{\alpha}\beta\bar{\psi}\in H^{\infty};$
\item $\bar{\phi},\theta\bar{\alpha}\phi\in H^{\infty};$
\item $\bar{\phi},(1-\lambda \bar{\alpha})\theta\phi,(\bar{\alpha}-\bar{\lambda})\beta\bar{\psi},(\alpha-\lambda)\psi,
(1-\lambda\bar{\alpha})\theta\phi\psi
\in H^{\infty},$ where $\lambda$ is a constant with $|\lambda|< 1;$
\item $\psi,(1-\lambda \bar{\alpha})\beta\bar{\psi},(\bar{\alpha}-\bar{\lambda})\theta\phi,(\alpha-\lambda)\bar{\phi},
(1-\lambda\bar{\alpha})\beta\bar{\phi}\bar{\psi}
\in H^{\infty},$ where $\lambda$ is a constant with $|\lambda|< 1.$
\end{enumerate}
In this case, $\sigma=\phi\psi.$
\end{theorem}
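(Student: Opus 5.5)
The plan is to reduce Theorem \ref{ADTP} to Theorem \ref{main1} through the factorization \eqref{DTM}. Write $A_{\beta}=\begin{bmatrix}M_{\beta}&0\\0&I\end{bmatrix}$ and $B_{\theta}=\begin{bmatrix}M_{\bar\theta}&0\\0&I\end{bmatrix}$. Then \eqref{DTM} says that $D^{\theta,\alpha}_{\phi}=A_{\alpha}R_{H_2}B_{\theta}$, where
\[
H_2=\begin{bmatrix}\begin{smallmatrix}\bar\alpha\theta\phi & \bar\alpha\phi\\ \theta\phi & \phi\end{smallmatrix}\end{bmatrix},
\]
obtained by reading off $f_2=\bar\alpha\theta\phi$, $\bar u_2=\alpha\bar\phi$, $g_2=\theta\phi$, $v_2=\phi$. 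Similarly $D^{\alpha,\beta}_{\psi}=A_{\beta}R_{H_1}B_{\alpha}$ with
\[
H_1=\begin{bmatrix}\begin{smallmatrix}\bar\beta\alpha\psi & \bar\beta\psi\\ \alpha\psi & \psi\end{smallmatrix}\end{bmatrix}.
\]
Since $B_{\alpha}A_{\alpha}=I_{L^2}$, we get $D^{\alpha,\beta}_{\psi}D^{\theta,\alpha}_{\phi}=A_{\beta}R_{H_1}R_{H_2}B_{\theta}$. Because $A_\beta$ and $B_\theta$ are unitary, the identity $D^{\alpha,\beta}_{\psi}D^{\theta,\alpha}_{\phi}=D^{\theta,\beta}_{\sigma}$ is equivalent to
\[
R_{H_1}R_{H_2}=R_{H_\sigma},\qquad H_\sigma=\begin{bmatrix}\begin{smallmatrix}\bar\beta\theta\sigma & \bar\beta\sigma\\ \theta\sigma & \sigma\end{smallmatrix}\end{bmatrix}.
\]

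First, Theorem \ref{main1} tells us that $R_{H_1}R_{H_2}$ is a GSIO exactly in Cases A–E. In every case the lower-right corner is $\tilde T_{v_1v_2}=\tilde T_{\psi\phi}$, and comparing it with $\tilde T_{\sigma}$ forces $\sigma=\phi\psi$. The upper-left corner is then $T_{f_1f_2}=T_{\bar\beta\theta\phi\psi}$, which agrees automatically. What remains in each case is to compare the off-diagonal entries from \eqref{1.1.}–\eqref{casec} with $H^*_{\beta\bar\phi\bar\psi}$ and $H_{\theta\phi\psi}$. Each comparison becomes a membership in $H^\infty$ via the rule $H_h=0\iff h\in H^\infty$.

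Next, translate the vector conditions of Theorem \ref{main1}(3) into conditions on the symbols:
\[
\brackbinom{\bar f_1}{g_1}=\brackbinom{\bar\alpha\beta\bar\psi}{\alpha\psi},\quad
\brackbinom{\bar u_1}{v_1}=\brackbinom{\beta\bar\psi}{\psi},\quad
\brackbinom{f_2}{\bar u_2}=\brackbinom{\bar\alpha\theta\phi}{\alpha\bar\phi},\quad
\brackbinom{g_2}{\bar v_2}=\brackbinom{\theta\phi}{\bar\phi}.
\]
I expect the cases to sort as follows. Case A, together with any case in which $\brackbinom{\bar u_1}{v_1}\in H^\infty_2$, gives $\psi,\bar\alpha\beta\bar\psi\in H^\infty$; the off-diagonal symbols $\bar f_1\bar u_2=\beta\bar\psi\bar\phi$ and $v_1g_2=\theta\phi\psi$ match automatically, which yields (1). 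Case D is symmetric and yields (2). Cases B and C should collapse into (1) or (2) once the off-diagonal comparisons are imposed. For example, in Case B one needs $\bar f_1\bar u_2+\bar u_1\bar v_2-\beta\bar\phi\bar\psi=\beta\bar\phi\bar\psi\in H^\infty$, and in addition the lower-left entry vanishes, so $H_{\theta\phi\psi}=0$.

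Case E produces (3) and (4). The conditions $\bar f_1-\bar\lambda\bar u_1=(\bar\alpha-\bar\lambda)\beta\bar\psi$, $g_1-\lambda v_1=(\alpha-\lambda)\psi$, $g_2-\lambda f_2=(1-\lambda\bar\alpha)\theta\phi$ and $\bar v_2-\bar\lambda\bar u_2=(1-\bar\lambda\alpha)\bar\phi$ lying in $H^\infty$ give exactly the lists in (3)/(4). The entries of \eqref{casec} must also match: comparing $H_{\lambda v_1f_2}$ with $H_{\theta\phi\psi}$ gives $(1-\lambda\bar\alpha)\theta\phi\psi\in H^\infty$, and the other corner gives the conjugate analogue. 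The remaining task is to see that $\bar\phi\in H^\infty$ (respectively $\psi\in H^\infty$) is forced in the appropriate subcase.

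I expect the main obstacle to be the restriction $|\lambda|<1$ and the elimination of redundant cases. If $|\lambda|>1$, then $1-\bar\lambda\alpha$ is invertible in $H^\infty$. Hence $(1-\bar\lambda\alpha)\bar\phi\in H^\infty$ gives $\bar\phi\in H^\infty$, and $(\bar\alpha-\bar\lambda)\beta\bar\psi=-\bar\lambda(1-\bar\alpha/\bar\lambda)\beta\bar\psi$ reduces similarly. Then Case E degenerates into (1) or (2), or into the rank-zero situation excluded in Case E. For $|\lambda|=1$ I would argue that $1-\bar\lambda\alpha$ is outer, since $\alpha$ is a nonconstant inner function and so $1-\bar\lambda\alpha\not\equiv0$. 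Because an outer function has no inner factor, $h(1-\bar\lambda\alpha)\in H^\infty$ with $h$ bounded gives $h\in H^\infty$ by the Smirnov class argument, and the same reduction applies. Finally, for sufficiency, I would check that each of (1)–(4) places $(H_1,H_2)$ in one of Cases A–E with matching off-diagonal symbols, so \eqref{rr} holds and the product equals $R_{H_{\phi\psi}}$.
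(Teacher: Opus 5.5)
Your reduction to $R_{H_1}R_{H_2}=R_{H_\sigma}$ via \eqref{DTM}, the use of Theorem~\ref{main1}, the identification $\sigma=\phi\psi$ from the lower-right corner, and the outer-function argument for $|\lambda|=1$ all match the paper. However, the case bookkeeping contains two false claims, and each one breaks the necessity direction.

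\textbf{Cases B and C do not collapse into (1) or (2).} In Case B you have $\bar\alpha\beta\bar\psi,\alpha\psi,\theta\phi,\bar\phi\in H^\infty$, together with $\theta\phi\psi\in H^\infty$ from the corner comparison you yourself computed; $\beta\bar\phi\bar\psi\in H^\infty$ is then automatic. This is exactly condition (3) with $\lambda=0$. Symmetrically, Case C is exactly (4) with $\lambda=0$. Since Case E forces $\lambda\neq 0$, Cases B and C are the only source of the $\lambda=0$ instances of (3) and (4).

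Concretely, take $\alpha=\theta=z^2$, $\beta=z$, $\phi=\psi=\bar z$. Then $R_{H_1}=I\oplus\tilde T_{\bar z}$ and $R_{H_2}=T_{\bar z}\oplus\tilde T_{\bar z}$, so $R_{H_1}R_{H_2}=T_{\bar z}\oplus\tilde T_{\bar z^2}=R_{H_\sigma}$ with $\sigma=\bar z^2$. Yet $\psi\notin H^\infty$ and $\theta\bar\alpha\phi=\bar z\notin H^\infty$, so neither (1) nor (2) holds. For the same reason, your claim that any case with $\brackbinom{\bar u_1}{v_1}\in H^\infty_2$ gives (1) fails: it yields $\beta\bar\psi\in H^\infty$, not $\bar\alpha\beta\bar\psi\in H^\infty$.

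\textbf{The $|\lambda|>1$ step has the invertibility backwards.} It is for $|\lambda|<1$ that $1/(1-\bar\lambda\alpha)\in H^\infty$. That is what forces $\bar\phi\in H^\infty$ and produces (3). For $|\lambda|>1$, the function $1-\bar\lambda\alpha$ vanishes wherever $\alpha=1/\bar\lambda$. Also, dividing $(\bar\alpha-\bar\lambda)\beta\bar\psi$ by the co-analytic factor $1-\bar\alpha/\bar\lambda$ does not preserve membership in $H^\infty$. What is invertible for $|\lambda|>1$ is $\alpha-\lambda$, so $(\alpha-\lambda)\psi\in H^\infty$ gives $\psi\in H^\infty$. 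After replacing $\lambda$ by $1/\bar\lambda$, the remaining Case E conditions become exactly (4).

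So $|\lambda|>1$ does not degenerate; it is precisely where (4) with $\lambda\neq 0$ comes from, and your plan leaves that condition without a source. For example, take $\alpha=\beta=\theta=(z^2+c)/(1+\bar c z^2)$ with $0<|c|<1$, and $\phi=\psi=z$. Then $\alpha-c=z^2(1-\bar c\alpha)$, and Case E holds with $\lambda=1/\bar c$, both sides of \eqref{ADT} being nonzero since $(\alpha\bar z)_-=c\bar z$. Condition (4) holds with $\lambda=c$, but (1) and (2) fail because $\bar\alpha\beta\bar\psi=\bar\phi=\bar z$. Your inference that $(1-\bar\lambda\alpha)\bar\phi\in H^\infty$ forces $\bar\phi\in H^\infty$ is directly contradicted here: $1-\alpha/c$ vanishes at $0$.

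With the correct assignment, your outline becomes the paper's proof:
\begin{itemize}
\item Case B gives (3) with $\lambda=0$.
\item Case C gives (4) with $\lambda=0$.
\item Case E with $|\lambda|<1$ gives (3).
\item Case E with $|\lambda|>1$ gives (4), via $\lambda\mapsto 1/\bar\lambda$.
\item Case E with $|\lambda|=1$ is impossible.
\end{itemize}
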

\begin{proof}
By \eqref{DTM}, we have 
$D^{\alpha,\beta}_{\psi} D^{\theta,\alpha}_{\phi}=D^{\theta,\beta}_{\sigma}$ if and only if 
\begin{align}\label{RD}
	R_{\begin{bmatrix}
	\begin{smallmatrix}
		\bar{\beta}\alpha\psi & \bar{\beta}\psi\\
	   \alpha \psi & \psi
	\end{smallmatrix}
	\end{bmatrix}}
	R_{\begin{bmatrix}
	\begin{smallmatrix}
		\bar{\alpha}\theta\phi & \bar{\alpha}\phi\\
	   \theta \phi & \phi
	\end{smallmatrix}
	\end{bmatrix}}
=R_{\begin{bmatrix}
	\begin{smallmatrix}
	\bar{\beta}\theta\sigma & \bar{\beta}\sigma\\
	   \theta \sigma & \sigma
	\end{smallmatrix}
	\end{bmatrix}}.
\end{align}
\eqref{RD} implies that
\begin{align}\label{ADT}
\brackbinom{V(\beta\bar{\alpha}\bar{\psi})_{-}}{(\alpha \psi)_{-}}\otimes \brackbinom{V(\bar{\alpha}\theta\phi)_{-}}{(\alpha\bar{\phi})_{-}}
=
\brackbinom{V(\beta\bar{\psi})_{-}}{\psi_{-}}\otimes \brackbinom{V(\theta\phi)_{-}}{(\bar{\phi})_{-}},
\end{align}
by Theorem \ref{main1}, and $\sigma=\phi\psi$ by \eqref{1.1.},\eqref{1.2.},\eqref{2.1.},\eqref{2.2.} and \eqref{casec}.
We consider the following cases.
Let $\Psi={\begin{bmatrix}
\begin{smallmatrix}
	\bar{\beta}\alpha\psi & \bar{\beta}\psi\\
   \alpha \psi & \psi
\end{smallmatrix}
\end{bmatrix}},\Phi={\begin{bmatrix}
\begin{smallmatrix}
	\bar{\alpha}\theta\phi & \bar{\alpha}\phi\\
   \theta \phi & \phi
\end{smallmatrix}
\end{bmatrix}}.$
\begin{enumerate}[label=\textbf{Case \Roman*:}, leftmargin=2cm]
\item
If $\brackbinom{V(\beta\bar{\alpha}\bar{\psi})_{-}}{(\alpha \psi)_{-}}=\brackbinom{V(\beta\bar{\psi})_{-}}{\psi_{-}}=0,$
then $\beta\bar{\alpha}\bar{\psi},\alpha \psi,\beta\bar{\psi},\psi\in H^{\infty}$, which is equivalent to $\beta\bar{\alpha}\bar{\psi},\psi\in H^{\infty}.$
Using \eqref{1.1.}, 
$R_{\Psi}R_{\Phi}=\begin{bmatrix}
	\begin{smallmatrix}
	T_{\bar{\beta}\theta\phi\psi}& H^{*}_{\beta\bar{\phi}\bar{\psi}}\\
	H_{\theta \phi\psi} &  \tilde{T}_{\phi\psi}
	\end{smallmatrix}
 \end{bmatrix}.$ Thus we have obtained condition (1) in the conclusion.
\item
If $\brackbinom{V(\beta\bar{\alpha}\bar{\psi})_{-}}{(\alpha \psi)_{-}}=\brackbinom{V(\theta\phi)_{-}}{(\bar{\phi})_{-}}=0,$ 
then $\beta\bar{\alpha}\bar{\psi},\alpha \psi,\theta\phi,\bar{\phi}\in H^{\infty}.$ Using \eqref{1.2.}, we have
$R_{\Psi}R_{\Phi}=
\begin{bmatrix}
	\begin{smallmatrix}
	T_{\bar{\beta}\theta\phi\psi}& 2H^{*}_{\beta\bar{\phi}\bar{\psi}}\\
	0 &  \tilde{T}_{\phi\psi}
	\end{smallmatrix}
\end{bmatrix}.$  If \eqref{RD} holds, then $
R_{\Psi}R_{\Phi}=\begin{bmatrix}
	\begin{smallmatrix}
	T_{\bar{\beta}\theta\phi\psi}& H^{*}_{\beta\bar{\phi}\bar{\psi}}\\
	H_{\theta \phi\psi} &  \tilde{T}_{\phi\psi}
	\end{smallmatrix}
 \end{bmatrix}.$ Hence, $\theta \phi\psi,\beta\bar{\phi}\bar{\psi}\in H^{\infty}.$
Note that $\beta\bar{\alpha}\bar{\psi},\bar{\phi}\in H^{\infty}$ implies that $\beta\bar{\phi}\bar{\psi}\in H^{\infty}.$
This case corresponds to condition (3) in the conclusion with $\lambda=0.$
\item
If $\brackbinom{V(\bar{\alpha}\theta\phi)_{-}}{(\alpha\bar{\phi})_{-}}=\brackbinom{V(\beta\bar{\psi})_{-}}{\psi_{-}}=0,$
then $\bar{\alpha}\theta\phi,\alpha\bar{\phi},\beta\bar{\psi},\psi\in H^{\infty}.$
Using \eqref{2.1.},
$R_{\Psi}R_{\Phi}=
\begin{bmatrix}
	\begin{smallmatrix}
	T_{\bar{\beta}\theta\phi\psi}& 0\\
	2H_{\theta \phi\psi} &  \tilde{T}_{\phi\psi}
	\end{smallmatrix}
\end{bmatrix}.$ If \eqref{RD} holds,  then $
R_{\Psi}R_{\Phi}=\begin{bmatrix}
	\begin{smallmatrix}
	T_{\bar{\beta}\theta\phi\psi}& H^{*}_{\beta\bar{\phi}\bar{\psi}}\\
	H_{\theta \phi\psi} &  \tilde{T}_{\phi\psi}
	\end{smallmatrix}
 \end{bmatrix}.$ Hence, $\theta \phi\psi,\beta\bar{\phi}\bar{\psi}\in H^{\infty}.$
Note that $\bar{\alpha}\theta\phi,\psi\in H^{\infty}$ implies that $\theta \phi\psi\in H^{\infty}.$
This case corresponds to the statement (4) with $\lambda=0.$

\item
If $\brackbinom{V(\bar{\alpha}\theta\phi)_{-}}{(\alpha\bar{\phi})_{-}}=\brackbinom{V(\theta\phi)_{-}}{(\bar{\phi})_{-}}=0,$
then $\bar{\alpha}\theta\phi,\alpha\bar{\phi},\theta\phi,\bar{\phi}\in H^{\infty},$ which is equivalent to $\bar{\phi}, \bar{\alpha}\theta\phi\in H^{\infty}.$
Using \eqref{2.2.},
$R_{\Psi}R_{\Phi}=\begin{bmatrix}
	\begin{smallmatrix}
	T_{\bar{\beta}\theta\phi\psi}& H^{*}_{\beta\bar{\phi}\bar{\psi}}\\
	H_{\theta \phi\psi} &  \tilde{T}_{\phi\psi}
	\end{smallmatrix}
 \end{bmatrix}.$ Thus we have obtained condition (2) in the conclusion.
 \item
If the both sides of \eqref{ADT} are nonzero, then
there exists a nonzero constant $\lambda$ such that
$\brackbinom{V(\beta\bar{\alpha}\bar{\psi})_{-}}{(\alpha \psi)_{-}}=\lambda \brackbinom{V(\beta\bar{\psi})_{-}}{\psi_{-}}$
and $\brackbinom{V(\theta\phi)_{-}}{(\bar{\phi})_{-}}=\bar{\lambda}\brackbinom{V(\bar{\alpha}\theta\phi)_{-}}{(\alpha\bar{\phi})_{-}}.$
Therefore,
$(\bar{\alpha}-\bar{\lambda})\beta\bar{\psi},(\alpha-\lambda)\psi,(1-\lambda\bar{\alpha})\theta\phi,
(1-\bar{\lambda}\alpha)\bar{\phi}\in H^{\infty}.$
Using \eqref{casec},
$R_{\Psi}R_{\Phi}=
\begin{bmatrix}
	\begin{smallmatrix}
	T_{\bar{\beta}\theta\phi\psi}& H^{*}_{1/\bar{\lambda}\beta\bar{\alpha}\bar{\phi}\bar{\psi}}\\
	H_{\lambda\theta\bar{\alpha} \phi\psi} &  \tilde{T}_{\phi\psi}
	\end{smallmatrix}
\end{bmatrix}.$ If \eqref{RD} holds, then $
R_{\Psi}R_{\Phi}=\begin{bmatrix}
	\begin{smallmatrix}
	T_{\bar{\beta}\theta\phi\psi}& H^{*}_{\beta\bar{\phi}\bar{\psi}}\\
	H_{\theta \phi\psi} &  \tilde{T}_{\phi\psi}
	\end{smallmatrix}
 \end{bmatrix}.$ Hence, $(1-\lambda \bar{\alpha})\theta \phi\psi,(\bar{\alpha}-\bar{\lambda})\bar{\phi}\bar{\psi}\in H^{\infty}.$
In this case, $D^{\alpha,\beta}_{\psi} D^{\theta,\alpha}_{\phi}=D^{\theta,\beta}_{\phi\psi}$ if and only if
\begin{align*}
&(\bar{\alpha}-\bar{\lambda})\beta\bar{\psi},(\alpha-\lambda)\psi,(1-\lambda\bar{\alpha})\theta\phi,\\
&(1-\bar{\lambda}\alpha)\bar{\phi},(1-\lambda \bar{\alpha})\theta \phi\psi,(\bar{\alpha}-\bar{\lambda})\bar{\phi}\bar{\psi}\in H^{\infty}.
\end{align*}

If $|\lambda|=1,$
then $\frac{1}{\bar{\lambda}\alpha - 1}\in H^{p}(0<p<1)$ by \cite[Corollary 4.26]{fricain2016theory},
where $H^{p}$ is  the closed subspace spanned by analytic polynomials in the $L^{p}$ norm.
$(\bar{\alpha}-\bar{\lambda})\beta\bar{\psi},(\alpha-\lambda)\psi\in H^{\infty}$ implies that $\beta\bar{\psi},\psi\in H^{\infty}.$
This contradicts the assumption that both sides of \eqref{ADT} are non-zero.

If  $|\lambda|<1,$ then $\frac{1}{1-\bar{\lambda}\alpha}\in H^{\infty}.$ Therefore,
$(1-\bar{\lambda}\alpha)\bar{\phi}\in H^{\infty}$ is equivalent to $\bar{\phi}\in H^{\infty}.$ Thus
$(\bar{\alpha}-\bar{\lambda})\beta\bar{\psi}\in H^{\infty}$
implies $(\bar{\alpha}-\bar{\lambda})\bar{\phi}\bar{\psi}\in H^{\infty}.$
Thus we have obtained condition (3) in the conclusion.

If $|\lambda|>1,$ then $\frac{1}{\alpha-\lambda}\in H^{\infty}.$ Therefore, $(\alpha -\lambda)\psi\in H^{\infty}$ is equivalent to
$\psi\in H^{\infty}.$
$(1-\lambda\bar{\alpha})\theta\phi\in H^{\infty}$  implies
$(1-\lambda\bar{\alpha})\theta\phi\psi\in H^{\infty}.$
Here, $1/\bar{\lambda}$ corresponds to $\lambda$ in (4) of conclusion.
\end{enumerate}

\end{proof}

The problem of when the product of two dual truncated Toeplitz operators equals a dual truncated Toeplitz operator
 has been studied in \cite{Sang2018} and \cite{gu2021}.
Now, we provide a new proof of this problem.

If $\theta=\alpha=\beta$ in Theorem \ref{ADTP}, we can derive the following corollary.
\begin{corollary}
Let $\phi$ and $\psi$ be in $L^{\infty}.$
Then $D^{\theta}_{\phi}D^{\theta}_{\psi}=D^{\theta}_{\phi\psi}$ if and only if
one of the following statements holds.
\begin{enumerate}
\item either $\phi$ or $\psi$ is constant;
\item $\bar{\phi},\bar{\psi},(\theta-\lambda)\phi,(\theta-\lambda)\psi,
(\theta-\lambda)\phi\psi\in H^{\infty},$ where $\lambda$ is a constant with $|\lambda|< 1;$
\item $\phi,\psi,(\theta-\lambda)\bar{\psi},(\theta-\lambda)\bar{\phi},
(\theta-\lambda)\bar{\phi}\bar{\psi}\in H^{\infty},$ where $\lambda$ is a constant with $|\lambda|< 1.$
\end{enumerate}
\end{corollary}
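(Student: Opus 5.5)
The plan is to specialize Theorem \ref{ADTP} to $\theta=\alpha=\beta$, keeping track of the order of the factors. In Theorem \ref{ADTP} the product is $D^{\alpha,\beta}_{\psi}D^{\theta,\alpha}_{\phi}$, so the left factor carries the symbol named $\psi$ there. To obtain $D^{\theta}_{\phi}D^{\theta}_{\psi}$, I apply the theorem with symbols $\psi'=\phi$ (left factor) and $\phi'=\psi$ (right factor), and all three inner functions equal to $\theta$. The theorem then says that $D^{\theta}_{\phi}D^{\theta}_{\psi}$ is a dual truncated Toeplitz operator if and only if one of its four conditions holds, and in that case the symbol is $\phi'\psi'=\phi\psi$. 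This matches the form $D^{\theta}_{\phi\psi}$ in the statement, so it remains to translate the four conditions.

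\emph{Conditions (1) and (2).} With $\alpha=\beta$ we have $\bar{\alpha}\beta=1$. Condition (1) of Theorem \ref{ADTP} becomes $\phi,\bar{\phi}\in H^{\infty}$, i.e.\ $\phi$ is constant. Similarly, with $\theta=\alpha$ we have $\theta\bar{\alpha}=1$, so condition (2) becomes $\bar{\psi},\psi\in H^{\infty}$, i.e.\ $\psi$ is constant. Together these give statement (1) of the corollary.

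\emph{Condition (3).} Here I use $(1-\lambda\bar{\theta})\theta=\theta-\lambda$ and $(\bar{\theta}-\bar{\lambda})\theta=1-\bar{\lambda}\theta$. The condition reads
\[
\bar{\psi},\ (\theta-\lambda)\psi,\ (1-\bar{\lambda}\theta)\bar{\phi},\ (\theta-\lambda)\phi,\ (\theta-\lambda)\phi\psi\in H^{\infty},\qquad |\lambda|<1 .
\]
The only step needing an argument is to replace $(1-\bar{\lambda}\theta)\bar{\phi}\in H^{\infty}$ by $\bar{\phi}\in H^{\infty}$. Since $|\lambda|<1$ and $\theta$ is inner, $(1-\bar{\lambda}\theta)^{-1}\in H^{\infty}$, so the two conditions are equivalent. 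This yields exactly statement (2) of the corollary.

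\emph{Condition (4).} The same identities turn it into
\[
\phi,\ (\theta-\lambda)\bar{\phi},\ (1-\bar{\lambda}\theta)\psi,\ (\theta-\lambda)\bar{\psi},\ (\theta-\lambda)\bar{\phi}\bar{\psi}\in H^{\infty},\qquad |\lambda|<1 .
\]
Again invertibility of $1-\bar{\lambda}\theta$ in $H^{\infty}$ gives $\psi\in H^{\infty}$, and this is statement (3) of the corollary.

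For the converse direction, each of statements (1)--(3) of the corollary, with $\lambda$ unchanged, satisfies the corresponding condition of Theorem \ref{ADTP}, because every equivalence used above is reversible. The only real care needed is the swap of roles between $\phi$ and $\psi$ and the $H^{\infty}$-invertibility of $1-\bar{\lambda}\theta$ for $|\lambda|<1$; the rest is direct substitution.
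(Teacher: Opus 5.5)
Your proposal is correct and is essentially the paper's proof. Like the paper, you specialize Theorem~\ref{ADTP} to $\theta=\alpha=\beta$, note that its conditions (1)--(2) make one symbol constant, and use the $H^{\infty}$-invertibility of $1-\bar{\lambda}\theta$ for $|\lambda|<1$ to drop that factor in conditions (3)--(4). Your explicit relabeling $\psi'=\phi$, $\phi'=\psi$ is slightly more careful than the paper: the paper specializes with the theorem's own labels, which effectively treats $D^{\theta}_{\psi}D^{\theta}_{\phi}$, and silently relies on the resulting conditions being symmetric in $\phi$ and $\psi$.
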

\begin{proof}
If $\theta=\alpha,$ by Theorem \ref{ADTP}, then 
\begin{enumerate}[label=(\alph*)]
	\item $\bar{\psi},\psi\in H^{\infty};$
	\item  $\phi,\bar{\phi}\in H^{\infty};$
	\item $\bar{\phi},(1-\bar{\lambda}\theta)\bar{\psi},(\theta-\lambda)\phi,(\theta-\lambda)\psi,
	(\theta-\lambda)\phi\psi,
	\in H^{\infty},$ where $\lambda$ is a constant with $|\lambda|< 1;$
	\item $\psi,(1-\bar{\lambda}\theta)\phi,(\theta-\lambda)\bar{\psi},(\theta-\lambda)\bar{\phi},
	(\theta-\lambda)\bar{\phi}\bar{\psi}\in H^{\infty},$
	 where $\lambda$ is a constant with $|\lambda|<1.$
	\end{enumerate}	
 (a) and  (b) respectively imply that $\phi$ is constant and $\psi$ is constant. 
In (c), $(1-\bar{\lambda}\theta)\bar{\psi}\in H^{\infty}$ 
is equivalent to $\bar{\psi}\in H^{\infty}.$
Similarly, in (d), $(1-\bar{\lambda}\theta)\phi\in H^{\infty}$ 
is equivalent to $\phi\in H^{\infty}.$
\end{proof}
\begin{theorem}
Let $\phi,\psi$  be non-constant functions in $L^{\infty}.$
Then $D^{\theta}_{\phi}D^{\theta}_{\psi}=D^{\theta}_{\psi}D^{\theta}_{\phi}$ if and only if
\begin{enumerate}
\item $\phi, \psi, \bar{\phi}(\theta-\lambda),\bar{\psi}(\theta-\lambda)\in H^{\infty},$ where $\lambda$ is a constant with $|\lambda|< 1;$
\item $\bar{\phi}, \bar{\psi}, \phi(\theta-\lambda),\psi(\theta-\lambda)\in H^{\infty},$ where $\lambda$ is a constant with $|\lambda|< 1;$
\item a nontrivial linear combination of $\phi$ and $\psi$ is constant.
\end{enumerate}
\end{theorem}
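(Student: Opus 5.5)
By \eqref{DTM} with $\alpha=\beta=\theta$, $D^{\theta}_{\phi}$ is unitarily equivalent, through the same pair of unitaries, to $R_{\Phi}$ with $\Phi=\begin{bmatrix}\begin{smallmatrix}\phi & \bar{\theta}\phi\\ \theta\phi & \phi\end{smallmatrix}\end{bmatrix}$; similarly $D^{\theta}_{\psi}$ corresponds to $R_{\Psi}$. So $D^{\theta}_{\phi}D^{\theta}_{\psi}=D^{\theta}_{\psi}D^{\theta}_{\phi}$ if and only if $R_{\Phi}R_{\Psi}=R_{\Psi}R_{\Phi}$. I will apply Theorem \ref{main2} with
\[
f_1=v_1=\phi,\quad u_1=\bar{\theta}\phi,\quad g_1=\theta\phi,
\]
and the analogous choice for $\psi$.

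Because $f_i=v_i$, Corollary \ref{odd} rules out the rank-one case of \eqref{WW}. Both sides are then either zero or of rank two. The relevant vectors are
\[
X_\phi=\brackbinom{V(\bar{\phi})_{-}}{(\theta\phi)_{-}},\qquad Y_\phi=\brackbinom{V(\theta\bar{\phi})_{-}}{\phi_{-}},
\]
together with
\[
\brackbinom{V\phi_{-}}{(\theta\bar{\phi})_{-}}=CY_\phi,\qquad \brackbinom{V(\theta\phi)_{-}}{(\bar{\phi})_{-}}=CX_\phi,
\]
where $C$ is the involution from the proof of Corollary \ref{odd}. With this notation, \eqref{WW} becomes
\[
X_\phi\otimes CY_\psi-Y_\phi\otimes CX_\psi=X_\psi\otimes CY_\phi-Y_\psi\otimes CX_\phi .
\]

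\emph{Zero case.} I will read off from Theorem \ref{mainc1} which of the combinations (L$i$)+(R$j$) are compatible with this symmetric symbol.

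Condition (L1), $X_\phi=Y_\phi=0$, gives $\bar{\phi},\theta\phi,\theta\bar{\phi},\phi\in H^\infty$. Hence $\phi$ is constant, which is excluded. The same argument excludes (L4), (R1) and (R4).

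The remaining combinations involve (L2)/(L3)/(L5) and their right-hand analogues. They force, for each of $\phi$ and $\psi$, either $\bar{\phi},\theta\phi\in H^\infty$ (that is, $X_\phi=0$), or $\phi,\theta\bar{\phi}\in H^\infty$, or a relation $X_\phi=\lambda Y_\phi$. A relation $X_\phi=\lambda Y_\phi$ means $(1-\bar{\lambda}\theta)\bar{\phi},(\theta-\lambda)\phi\in H^\infty$. The $|\lambda|$-trichotomy argument from the proof of Theorem \ref{ADTP} then applies:
\begin{itemize}
\item if $|\lambda|=1$, $\phi$ is constant;
\item if $|\lambda|<1$, we get $\bar{\phi},(\theta-\lambda)\phi\in H^\infty$;
\item if $|\lambda|>1$, we get the conjugate situation $\phi,(\theta-1/\bar\lambda)\bar{\phi}\in H^\infty$.
\end{itemize}
I then insert these into the operator-matrix comparisons of Theorem \ref{mainc1}, i.e. \eqref{1.2} and \eqref{2.1}. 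I expect them to force the same $\lambda$ for $\phi$ and $\psi$ and the same "type" (both coanalytic or both analytic). Mixed types should force a Hankel symbol such as $\theta\phi\psi$ or $\bar\theta\bar\phi\bar\psi$ into $H^\infty$ in a way that contradicts non-constancy. This yields conditions (1) and (2) of the statement.

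\emph{Rank-two case.} Theorem \ref{R2} gives constants $a,b,c,d$ with
\[
X_\phi=aX_\psi+bY_\psi,\qquad Y_\phi=cX_\psi+dY_\psi,
\]
and, from \eqref{babcd} rewritten through $C$, the dual relations
\[
CY_\phi=\bar aCY_\psi-\bar cCX_\psi,\qquad CX_\phi=-\bar bCY_\psi+\bar dCX_\psi .
\]
Applying $C$ and comparing with the first pair, using linear independence of $X_\psi,Y_\psi$ (rank two), I expect $d=a$ and $b=c=0$. Then $X_{\phi-a\psi}=Y_{\phi-a\psi}=0$, so $\phi-a\psi$ is constant. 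Conversely, if a nontrivial linear combination of $\phi$ and $\psi$ is constant, the operators commute trivially. This gives (3).

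The converses of (1) and (2) follow by checking \eqref{1.2}, \eqref{2.1} and \eqref{WW} directly, which is easiest through the explicit products \eqref{casec} and \eqref{1.2.}.

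\emph{Main obstacle.} The real work is the case bookkeeping in the zero case: showing that mixed combinations (for example (L2)+(R3), or different $\lambda$ versus $\mu$ in (L5)+(R5)) collapse either to constant symbols or to a shared $\lambda$. For (L5)+(R5) I would first use the constancy of $\lambda v_1f_2-\mu f_1v_2=\lambda\phi\psi-\mu\psi\phi$. When $\lambda\neq\mu$ this forces $\phi\psi$ to be constant, and combined with the analyticity conditions I expect this to give a contradiction or to reduce to (3).
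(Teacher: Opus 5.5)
Your outline is correct and follows the paper's reduction: \eqref{DTM} turns the question into $R_\Phi R_\Psi=R_\Psi R_\Phi$, Theorem \ref{main2} gives \eqref{W}, and Corollary \ref{odd} leaves only the zero and rank-two cases. The genuine difference is in the rank-two case. The paper takes the 1st, 4th, 5th and 8th membership conditions of Theorem \ref{R2}, adds them in pairs to get $2\phi-(a+d)\psi,\ 2\bar\phi-(\bar a+\bar d)\bar\psi\in H^\infty$, and concludes that $\phi-\frac{a+d}{2}\psi$ is constant. You instead apply the conjugation $C$ to \eqref{babcd} and compare with \eqref{abcd}. Since rank two makes $X_\psi,Y_\psi$ linearly independent, this gives $a=d$ and $b=c=0$, hence $X_{\phi-a\psi}=Y_{\phi-a\psi}=0$. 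This argument is correct and purely linear-algebraic, so it avoids the $H^\infty$ bookkeeping of Theorem \ref{R2}. It is also slightly sharper, since it shows the paper's $\frac{a+d}{2}$ is just $a$.

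In the zero case your conclusion is right, but the mechanism you predict is not the one at work, and the step as you describe it would not close the argument.

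\emph{Mixed types are not excluded by \eqref{1.2} and \eqref{2.1}.} Suppose $X_\phi=0$ and $Y_\psi=0$. Then $\theta\phi,\theta\psi,\theta\bar\phi,\theta\bar\psi\in H^\infty$, so every Hankel operator in \eqref{1.2} and \eqref{2.1} vanishes and both equations hold identically. What excludes this pair is the tensor identity itself: the left side of \eqref{W} becomes $-Y_\phi\otimes CX_\psi$, which has rank one unless one symbol is constant.

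\emph{The coupling is already in the first identity.} The identity $X_\phi\otimes CY_\psi=Y_\phi\otimes CX_\psi$ ties the two symbols together:
(L2) means $X_\phi=X_\psi=0$;
(L3) means $Y_\phi=Y_\psi=0$;
(L5) means $X_\phi=\lambda Y_\phi$ and $X_\psi=\lambda Y_\psi$ with one common $\lambda$ (apply $C$ to $CX_\psi=\bar\lambda CY_\psi$).

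\emph{Pairing with the (R)-list.} Every mixed combination forces $X=Y=0$ for one of the symbols, which is excluded. In (L5)+(R5) you have $\lambda Y_\phi=X_\phi=\mu Y_\phi$ with $Y_\phi\neq 0$, so $\lambda=\mu$. Your discussion of the case $\lambda\neq\mu$ is therefore vacuous.

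\emph{The surviving combinations.} Here \eqref{1.2} and \eqref{2.1} hold automatically: the extra conditions in (5), (9) and (13) of Theorem \ref{mainc1} reduce to $0\in H^\infty$ and $\lambda\phi\psi-\lambda\psi\phi=0$. The $|\lambda|$-trichotomy then gives (1) or (2). The paper compresses exactly this into its appeal to the discussion of \eqref{ADT}.

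\emph{A small omission in the converse.} For (1) with $\lambda=0$ the relevant product formula is \eqref{2.1.}, not \eqref{casec} or \eqref{1.2.}; in both orders it gives upper-right entry $0$ and lower-left entry $H_{2\theta\phi\psi}$.
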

\begin{proof}
Assume that $D^{\theta}_{\phi}D^{\theta}_{\psi}=D^{\theta}_{\psi}D^{\theta}_{\phi},$ 
by \eqref{DTM}, this is equivalent to 
$R_{\begin{bmatrix}
	\begin{smallmatrix}
		\phi & \bar{\theta}\phi\\
	   \theta \phi & \phi
	\end{smallmatrix}
	\end{bmatrix}}$ and $
	R_{\begin{bmatrix}
	\begin{smallmatrix}
		\psi & \bar{\theta}\psi\\
	   \theta \psi & \psi
	\end{smallmatrix}
	\end{bmatrix}}$ commuting.
 Theorem \ref{main2} implies that
\begin{align}\label{W}
	&\brackbinom{V(\bar{\phi})_{-}}{(\theta \phi)_{-}}\otimes \brackbinom{V\psi_{-}}{(\theta\bar{\psi})_{-}}
	-\brackbinom{V(\theta\bar{\phi})_{-}}{\phi_{-}}\otimes \brackbinom{V(\theta \psi)_{-}}{(\bar{\psi})_{-}}\nonumber \\
	=&
	\brackbinom{V(\bar{\psi})_{-}}{(\theta \psi)_{-}}\otimes \brackbinom{V\phi_{-}}{(\theta\bar{\phi})_{-}}
	-\brackbinom{V(\theta\bar{\psi})_{-}}{\psi_{-}}\otimes \brackbinom{V(\theta \phi)_{-}}{(\bar{\phi})_{-}}.
\end{align}
Now use Corollary \ref{odd} to see that
$\brackbinom{V(\bar{\phi})_{-}}{(\theta \phi)_{-}}\otimes \brackbinom{V\psi_{-}}{(\theta\bar{\psi})_{-}}
-\brackbinom{V(\theta\bar{\phi})_{-}}{\phi_{-}}\otimes \brackbinom{V(\theta \psi)_{-}}{(\bar{\psi})_{-}}$ 
necessarily either zero or  rank two.

If 
$\brackbinom{V(\bar{\phi})_{-}}{(\theta \phi)_{-}}\otimes \brackbinom{V\psi_{-}}{(\theta\bar{\psi})_{-}}
=\brackbinom{V(\theta\bar{\phi})_{-}}{\phi_{-}}\otimes \brackbinom{V(\theta \psi)_{-}}{(\bar{\psi})_{-}}.$ 
From Theorem \ref{ADTP}’s discussion of \eqref{ADT}, we derive (1) and (2) in conclusion.
Using \eqref{1.1.},\eqref{2.2.},\eqref{2.1.},\eqref{1.2.} and \eqref{casec}, we conclude that either (1) or (2) in conclusion ensures
$D^{\theta}_{\phi}D^{\theta}_{\psi}=D^{\theta}_{\psi}D^{\theta}_{\phi}$ holds. 

If 
$\brackbinom{V(\bar{\phi})_{-}}{(\theta \phi)_{-}}\otimes \brackbinom{V\psi_{-}}{(\theta\bar{\psi})_{-}}
-\brackbinom{V(\theta\bar{\phi})_{-}}{\phi_{-}}\otimes \brackbinom{V(\theta \psi)_{-}}{(\bar{\psi})_{-}}$ 
is a rank two operator,
then there exist exist constants $a,b,c,d$ such that
$\bar{\phi}-\bar{a}\bar{\psi}-\bar{b}\theta\bar{\psi},
\bar{\phi}+\bar{b}\theta\bar{\psi}-\bar{d}\bar{\psi},
\phi-c\theta\psi-d\psi,
\phi-a\psi+c\theta\psi\in H^{\infty}$
by the 1st, 4th, 5th, and 8th formulas in Theorem \ref{R2}.
Therefore,
$2\phi-(a+d)\psi,2\bar{\phi}-(\bar{a}+\bar{d})\bar{\psi}\in H^{\infty}.$
It follows that $\phi-\frac{a+d}{2} \psi$ is a constant. In this case, direct computation shows that 
$D^{\theta}_{\phi}D^{\theta}_{\psi}=D^{\theta}_{\psi}D^{\theta}_{\phi}.$ 
\end{proof}
\subsection{Singular integral operators}
The problem of products of singular integral operators has long been of interest,
and it has been studied in \cite{Gu2016,camara2023paired}.
For bounded functions $f,g$, we have
\begin{align*}
S_{f,g}=fP_{+}+gP_{-}=P_{+}fP_{+}+P_{-}fP_{+}+P_{+}gP_{-}+P_{-}gP_{-}=
R_{\begin{bmatrix}
	\begin{smallmatrix}
	   f& g\\
	   f & g
	\end{smallmatrix}
	\end{bmatrix}}.
\end{align*}
For the product of two singular integral operators to be a singular integral operator, 
applying Theorem \ref{main1} yields $\brackbinom{V(\bar{f_{1}}_{-}-\bar{g}_{1-})}{f_{1-}-g_{1-}}\otimes \brackbinom{Vf_{2-}}{(\bar{g}_{2})_{-}}=0,$ 
which closely matches the conclusion of the following Theorem, only the proof of $f_1=g_1$ remains required.
\begin{theorem}\label{S1}
Let $f_1,f_2,g_1$ and $g_2$ be functions in $L^{\infty}.$ Then $S_{f_1,g_1}S_{f_2,g_2}=S_{f_1 f_2,g_1 g_2}$
if and only if one of the following statements holds
\begin{enumerate}
	\item $f_1=g_1;$
	\item $f_2,\bar{g}_2\in H^{\infty}.$
\end{enumerate}
\end{theorem}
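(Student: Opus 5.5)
I would argue directly from the definition $S_{f,g}=fP_{+}+gP_{-}$ rather than through the $2\times 2$ machinery of Theorem \ref{main1}, because the $L^{2}$-level identity isolates the factor $f_{1}-g_{1}$ multiplicatively. This multiplicative factor is exactly what Theorem \ref{main1} cannot see: its rank-one condition only forces $(f_{1}-g_{1})_{-}$ and $(\bar f_{1}-\bar g_{1})_{-}$ to vanish, i.e.\ $f_{1}-g_{1}$ constant, not zero.

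The first step is the computation. For $x\in L^{2}$ put $y=f_{2}P_{+}x+g_{2}P_{-}x$, so that $S_{f_1,g_1}S_{f_2,g_2}x=f_{1}P_{+}y+g_{1}P_{-}y$. Write $f_{1}P_{+}y=f_{1}y-f_{1}P_{-}y$ and expand $f_{1}y$. The terms $f_{1}f_{2}P_{+}x$ cancel against $S_{f_1f_2,g_1g_2}x$, and one should obtain
\begin{align*}
S_{f_1,g_1}S_{f_2,g_2}x-S_{f_1f_2,g_1g_2}x=(g_{1}-f_{1})\bigl(g_{2}P_{-}x-P_{-}y\bigr)=(g_{1}-f_{1})\,Dx,
\end{align*}
where
\begin{align*}
Dx=P_{+}(g_{2}P_{-}x)-P_{-}(f_{2}P_{+}x)=H^{*}_{\bar g_{2}}P_{-}x-H_{f_{2}}P_{+}x .
\end{align*}
So the identity $S_{f_1,g_1}S_{f_2,g_2}=S_{f_1f_2,g_1g_2}$ holds if and only if $(g_{1}-f_{1})\,Dx=0$ for every $x\in L^{2}$.

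Sufficiency is then immediate. If $f_{1}=g_{1}$, the prefactor vanishes. If $f_{2},\bar g_{2}\in H^{\infty}$, then $H_{f_{2}}=0$ and $H^{*}_{\bar g_{2}}=0$, so $D=0$.

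For necessity, suppose condition (2) fails; I would split into two cases.
\begin{itemize}
\item If $f_{2}\notin H^{\infty}$, take $x=1$. Then $Dx=-(f_{2})_{-}$, which is a nonzero element of $\bar z\overline{H^{2}}$.
\item If $\bar g_{2}\notin H^{\infty}$, take $x\in\bar z\overline{H^{2}}$ with $H^{*}_{\bar g_{2}}x\neq 0$. For instance $x=\bar z$ gives $Dx=P_{+}(\bar z g_{2})$, and if that happens to vanish I would use $x=\bar z^{\,n}$ for a suitable $n$, since $H^{*}_{\bar g_{2}}\neq 0$. Then $Dx$ is a nonzero element of $H^{2}$.
\end{itemize}
In either case $(g_{1}-f_{1})h=0$ for some nonzero $h$ lying in $H^{2}$ or in $\bar z\overline{H^{2}}$. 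By the F.~and M.~Riesz theorem (equivalently, $\log|h|\in L^{1}$), such an $h$ is nonzero almost everywhere on $\mathbb{T}$. Hence $g_{1}=f_{1}$ a.e., which is condition (1).

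The only step needing care is the bookkeeping in the algebraic identity above, where signs and $P_{\pm}$ placements are easy to get wrong. Apart from that, the key input is the a.e.\ nonvanishing of nonzero Hardy-class functions, which is what upgrades "$f_{1}-g_{1}$ annihilates a nonzero function" to "$f_{1}=g_{1}$".
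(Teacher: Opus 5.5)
Your proof is correct, and it takes a genuinely different route from the paper.

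The paper never factors the defect operator. It applies Theorem \ref{main1} to the GSIO symbols of $S_{f_1,g_1}$ and $S_{f_2,g_2}$, which forces either $f_2,\bar g_2\in H^\infty$ or $f_1-g_1$ constant. In the latter case it computes the product with the Case E formula \eqref{casec} ($\lambda=1$) and compares off-diagonal entries with those of $S_{f_1f_2,g_1g_2}$. This gives $(f_1-g_1)f_2\in H^2$ and $(\bar f_1-\bar g_1)\bar g_2\in H^2$, and a nonzero constant could simply be divided out.

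Your identity is
\[
S_{f_1,g_1}S_{f_2,g_2}-S_{f_1f_2,g_1g_2}=M_{f_1-g_1}\bigl(H^{*}_{\bar g_2}P_{-}-H_{f_2}P_{+}\bigr).
\]
The prefactor is $f_1-g_1$, not $g_1-f_1$ as you wrote; this is harmless, since only vanishing matters. The identity bypasses Lemmas \ref{k1}, \ref{k2}, \ref{key} and Theorem \ref{main1} entirely and gives the defect in closed form.

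The price is the F.\ and M.\ Riesz theorem. You need it because $f_1-g_1$ is a priori an arbitrary bounded function annihilating a nonzero Hardy-class function. The paper avoids that analytic input because Theorem \ref{main1} has already reduced $f_1-g_1$ to a constant. Its proof also serves the paper's aim of presenting the result as a consequence of the unified GSIO framework. Yours is shorter and self-contained.

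One small simplification: in your second case $x=\bar z$ always works. Indeed, $\bar g_2\notin H^\infty$ means $\hat g_2(m)\neq 0$ for some $m\ge 1$, so $P_{+}(\bar z g_2)\neq 0$ and no fallback to $\bar z^{\,n}$ is needed.
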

\begin{proof}
If $S_{f_1,g_1}S_{f_2,g_2}=S_{f_1 f_2,g_1 g_2},$
by Theorem \ref{main1}, then
$\brackbinom{V(\bar{f_{1}}_{-}-\bar{g}_{1-})}{f_{1-}-g_{1-}}\otimes \brackbinom{Vf_{2-}}{(\bar{g}_{2})_{-}}=0.$

If $\brackbinom{Vf_{2-}}{(\bar{g}_{2})_{-}}\neq 0,$ then $f_1 -g_1$ is constant.
Using \eqref{casec} with $\lambda=1$, we have 
$S_{f_1,g_1}S_{f_2,g_2}=
\begin{bmatrix}
\begin{smallmatrix}
	T_{f_1 f_2}& H^{*}_{\bar{f}_1\bar{g}_2}\\
	H_{g_1 f_2} &  \tilde{T}_{g_1 g_2}
\end{smallmatrix}
 \end{bmatrix}  
=
\begin{bmatrix}
\begin{smallmatrix}
	T_{f_1 f_2}& H^{*}_{\bar{g}_1\bar{g}_2}\\
	H_{f_1 f_2} &  \tilde{T}_{g_1 g_2}
\end{smallmatrix}
 \end{bmatrix}  
=S_{f_1 f_2,g_1 g_2}.$
Thus 
$H_{g_1 f_2}=H_{f_1 f_2}$ and $H^{*}_{\bar{f}_1 g_{2}}=H^{*}_{\bar{g}_1 \bar{g}_2}.$
This implies that $(f_{1}-g_{1})f_2\in H^{2}$ and $(\bar{f}_{1}-\bar{g}_{1})\bar{g}_2\in H^{2}.$
If $f_{1}\neq g_{1},$ then $f_2,\bar{g}_2\in H^{\infty},$ which contradicts our assumption.

Conversely, if $f_1=g_1,$ then $S_{f_1,g_1}S_{f_2,g_2}=f_1(f_2 P_{+}+g_2 P_{-})=S_{f_1 f_2,g_1 g_2}.$
If $f_2,\bar{g}_2\in H^{\infty},$ then $S_{f_1,g_1}S_{f_2,g_2}=f_1P_{+}f_2P_{+}
+f_1P_{+}g_2P_{-}+g_1P_{-}f_2P_{+}+g_1P_{-}g_2P_{-}=f_1f_2P_{+}+g_1g_2P_{-}=S_{f_1 f_2,g_1 g_2}.$
\end{proof}
\begin{theorem}
	Let $f_1,f_2,g_1$ and $g_2$ be functions in $L^{\infty}.$ 
	Then $S_{f_1,g_1}S_{f_2,g_2}=S_{f_2,g_2}S_{f_1,g_1}$
	if and only if one of the following statements holds.
	\begin{enumerate}
		\item $f_1=g_1$ and $f_2=g_2;$ \label{5.6.1}
		\item $f_1,\bar{g}_1,f_2,\bar{g}_2\in H^{\infty};$ \label{5.6.2}
		\item There exist constatns $c_1,c_2,c_3$ with $|c_1|+|c_2|\neq 0$ such that 
		$c_1\brackbinom{f_1}{g_1}-c_2\brackbinom{f_2}{g_2}=\brackbinom{c_3}{c_3}.$
		In this case, $c_1 S_{f_1.g_1}=c_2S_{f_2,g_2}+c_3I_{L^2}.$ \label{5.6.3}
	\end{enumerate}
	\end{theorem}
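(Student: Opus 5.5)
The plan is to specialize Theorem \ref{main2} to the symbols $H_i=\begin{bmatrix}\begin{smallmatrix} f_i & g_i\\ f_i & g_i\end{smallmatrix}\end{bmatrix}$, since $S_{f_i,g_i}=R_{H_i}$. In the notation of Theorem \ref{main2} this means taking $u_i=g_i$, taking $f_i$ as the lower-left symbol, and taking $v_i=g_i$.

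Set
\[
x_i=\brackbinom{V(\bar f_i-\bar g_i)_{-}}{(f_i-g_i)_{-}},\qquad y_i=\brackbinom{Vf_{i-}}{(\bar g_i)_{-}} .
\]
The two vectors on the right of each tensor in \eqref{WW} coincide, so the two terms on each side collapse into a single tensor. Hence \eqref{WW} reads $x_1\otimes y_2=x_2\otimes y_1$, and both sides have rank at most one. The rank-two situation of Theorem \ref{R2} therefore never occurs. Moreover, the hypothesis $f_i=v_i$ of Corollary \ref{odd} fails here, so the rank-one case has to be treated directly. The remaining conditions \eqref{1.2} and \eqref{2.1} become
\[
H_{f_1}T_{f_2}+\tilde T_{g_1}H_{f_2}-H_{f_2}T_{f_1}-\tilde T_{g_2}H_{f_1}=0
\]
together with its analogue in $\bar g_i,\bar f_i$. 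I will simplify these with \eqref{eq:2m} and \eqref{HT} in each case, exactly as in the proof of Theorem \ref{mainc2}.

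First, the case where both sides of \eqref{WW} vanish. Then $x_1=0$ or $y_2=0$, and $x_2=0$ or $y_1=0$.

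If $x_1=x_2=0$, then $f_i-g_i=c_i$ is constant, so $S_{f_i,g_i}=M_{g_i}+c_iP_+$. A direct computation gives
\[
[S_{f_1,g_1},S_{f_2,g_2}]=[P_+,\,c_1g_2-c_2g_1].
\]
This vanishes iff $c_1g_2-c_2g_1$ is constant, because $H_h=0$ and $H_{\bar h}=0$ force $h$ to be constant. That yields either (1), when $c_1=c_2=0$, or (3).

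If $y_1=y_2=0$, we get (2) directly; conversely, (2) gives commuting operators by Case B/C type computations or simply by $f_iP_+=P_+f_iP_+$.

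In the mixed case $x_1=0=y_1$, the function $f_1=g_1+c$ is both analytic and coanalytic, so $f_1$ and $g_1$ are constants. Then $S_{f_1,g_1}=aI+bP_+$, and commuting with $S_{f_2,g_2}$ forces $b=0$ or $[P_+,g_2]=[P_+,f_2]$-type conditions. These lead either to (3) (with $c_2=0$) or, via \eqref{2.1}, to $f_2,\bar g_2\in H^\infty$ or $f_2=g_2$ plus a constant. All of these subcases land in (1)--(3).

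Next, the rank-one case. Here $x_1=\kappa x_2$ and $y_2=\bar\kappa y_1$ for some $\kappa\neq0$, with all four vectors nonzero. This gives the following conditions:
\begin{itemize}
\item $(f_1-g_1)-\kappa(f_2-g_2)$ is a constant $c$;
\item $f_2-\bar\kappa f_1$ and $\bar g_2-\kappa\bar g_1$ lie in $H^\infty$.
\end{itemize}
I will substitute these into the displayed Hankel equation above, using \eqref{eq:2m} and \eqref{HT} to collapse it to a single Hankel operator $H_\Theta=0$. The expected form of $\Theta$ is a product of $(f_2-\bar\kappa f_1)$-type analytic factors with $(f_1-g_1)$. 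Together with its conjugate counterpart, this should force $f_2-\bar\kappa f_1$ and $g_2-\bar\kappa g_1$ to be the same constant. That is (3) with $c_1=\bar\kappa$, $c_2=1$.

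The main obstacle is this last simplification: showing that the analyticity conditions plus $H_\Theta=0$ genuinely force constancy, rather than leaving some exotic solutions. If the direct collapse stalls, I would fall back on comparing the $(1,2)$ and $(2,1)$ entries using \eqref{casec}, as in the proof of Theorem \ref{S1}.

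Sufficiency of (1)--(3) is immediate:
\begin{itemize}
\item (1): both operators are multiplications by $f_i$.
\item (2): $S_{f_i,g_i}=f_iP_++g_iP_-$ with $P_\pm$ invariant under multiplication by the respective symbols, so the products reduce to $f_1f_2P_++g_1g_2P_-$ in either order.
\item (3): $c_1S_{f_1,g_1}=c_2S_{f_2,g_2}+c_3I$, so the two operators commute trivially.
\end{itemize}
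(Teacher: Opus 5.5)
There is a genuine gap, and it lies in the rank-one case. Your handling of the case where both sides of \eqref{WW} vanish is sound, and so is sufficiency. In particular, the identity $[S_{f_1,g_1},S_{f_2,g_2}]=[P_+,M_{c_1g_2-c_2g_1}]$ is a tidy alternative to the paper's comparison of matrix entries via \eqref{casec}. Your mixed subcase is vague, but it is settled exactly by $[aI+bP_+,S_{f_2,g_2}]=b(P_+g_2P_--P_-f_2P_+)$.

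The trouble starts with translating $x_1\otimes y_2=x_2\otimes y_1$ into conditions on the symbols.
\begin{itemize}
\item Since $(\kappa p)\otimes q=p\otimes(\bar\kappa q)$, the relation $x_1=\kappa x_2$ forces $y_1=\bar\kappa y_2$, not $y_2=\bar\kappa y_1$.
\item Because $V$ is antilinear, $Vf_{1-}=\bar\kappa Vf_{2-}=V(\kappa f_{2-})$ gives $f_1-\kappa f_2\in H^\infty$.
\item Likewise $(\bar g_1)_-=\bar\kappa(\bar g_2)_-$ gives $\bar g_1-\bar\kappa\bar g_2\in H^\infty$.
\end{itemize}
Your conditions $f_2-\bar\kappa f_1,\ \bar g_2-\kappa\bar g_1\in H^\infty$ amount to replacing $\kappa$ by $1/\bar\kappa$. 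They therefore do not follow from the hypothesis unless $|\kappa|=1$. They also involve a different linear combination from your (correct) constancy condition $(f_1-g_1)-\kappa(f_2-g_2)=c$. That mismatch is why you cannot close the argument. The decisive step is then only described (``expected form of $\Theta$'', ``should force''), so the rank-one case is not actually proved.

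The missing observation is that, with the correct relations, constancy needs no Hankel computation at all.
\begin{itemize}
\item $f_1-\kappa f_2$ is analytic, and $f_1-\kappa f_2=(g_1-\kappa g_2)+c$ is coanalytic. So $f_1-\kappa f_2=\mu_1$ and $g_1-\kappa g_2=\mu_2$ are constants.
\item Then $H_{f_1}=\kappa H_{f_2}$, $T_{f_1}=\kappa T_{f_2}+\mu_1 I$ and $\tilde T_{g_1}=\kappa\tilde T_{g_2}+\mu_2 I$. Your displayed Hankel equation collapses to $(\mu_2-\mu_1)H_{f_2}=0$.
\item Its conjugate analogue collapses in the same way to $(\bar\mu_1-\bar\mu_2)H_{\bar g_2}=0$.
\item Since $y_2\neq 0$, $f_2$ and $\bar g_2$ are not both in $H^\infty$, so $\mu_1=\mu_2$.
\end{itemize}
This is condition (3) with $c_1=1$, $c_2=\kappa$, $c_3=\mu_1$, and it is precisely the paper's argument.
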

\begin{proof}
Suppose that  $S_{f_1,g_1}S_{f_2,g_2}=S_{f_2,g_2}S_{f_1,g_1},$ by Theorem \ref{main2}, we have
\begin{align}
\brackbinom{V(\bar{f_{1}}_{-}-\bar{g}_{1-})}{f_{1-}-g_{1-}}\otimes \brackbinom{Vf_{2-}}{(\bar{g}_{2})_{-}}
=&\brackbinom{V(\bar{f_{2}}_{-}-\bar{g}_{2-})}{f_{2-}-g_{2-}}\otimes \brackbinom{Vf_{1-}}{(\bar{g}_{1})_{-}};\label{SSSS}\\
H_{\bar{g}_2}T_{\bar{f}_1}+\tilde{T}_{\bar{g}_2}H_{\bar{g}_1}-&H_{\bar{g}_1}T_{\bar{f}_2}-\tilde{T}_{\bar{g}_1}H_{\bar{g}_2}=0;\label{H1T2}\\
H_{f_1}T_{f_2}+\tilde{T}_{g_1}H_{f_2}-&H_{f_2}T_{f_1}-\tilde{T}_{g_2}H_{f_1}=0.\label{H2T1}
\end{align}

First, we discuss the case where both sides of \eqref{SSSS} are zero. 
If $\brackbinom{Vf_{2-}}{(\bar{g}_{2})_{-}}=0$, then $f_2,\bar{g}_{2}\in H^{\infty}.$ By Theorem \ref{S1}, 
we have  $S_{f_1,g_1}S_{f_2,g_2}=S_{f_1 f_2,g_1 g_2}.$
Thus $S_{f_2,g_2}S_{f_1,g_1}=S_{f_1 f_2,g_1 g_2},$ using Theorem \ref{S1} again, we have $f_2=g_2$ or $f_1,\bar{g}_1\in H^{\infty}.$
$f_2=g_2$ implies $f_2=g_2$ is a constant and $S_{f_2,g_2}=f_1 I_{L^2}.$ 
Thus, we have obtained \eqref{5.6.3} in the conclusion.
$f_1,\bar{g}_1\in H^{\infty}$ corresponds to \eqref{5.6.2} in the conclusion.
The case of $\brackbinom{Vf_{1-}}{(\bar{g}_{1})_{-}}=0$ is similarly.

If $\brackbinom{V(\bar{f_{1}}_{-}-\bar{g}_{1-})}{f_{1-}-g_{1-}}=\brackbinom{V(\bar{f_{2}}_{-}-\bar{g}_{2-})}{f_{2-}-g_{2-}}=0,$
then $f_1 -g_1\triangleq c_2$ and $f_2-g_2\triangleq c_1$ are constants, and 
$S_{f_1,g_1}S_{f_2,g_2}=
\begin{bmatrix}
\begin{smallmatrix}
	T_{f_1 f_2}& H^{*}_{\bar{f}_1\bar{g}_2}\\
	H_{g_1 f_2} &  \tilde{T}_{g_1 g_2}
\end{smallmatrix}
 \end{bmatrix}  
=
\begin{bmatrix}
\begin{smallmatrix}
	T_{f_1 f_2}& H^{*}_{\bar{f}_2\bar{g}_1}\\
	H_{g_2 f_1} &  \tilde{T}_{g_1 g_2}
\end{smallmatrix}
 \end{bmatrix}  
=S_{f_2,g_2}S_{f_1,g_1}$ by  \eqref{casec} with $\lambda=1.$
Thus, $H_{g_1 f_2}=H_{g_2 f_1}$ and $H^{*}_{\bar{f}_1\bar{g}_2}=H^{*}_{\bar{f}_2\bar{g}_1}.$
This means that $f_2g_1-f_1 g_2\triangleq c_3$ is constant.
If $c_1=c_2=0,$ then we have obtained \eqref{5.6.1} in the conclusion. Otherwise,
$c_1f_1 -c_2 f_2=c_1g_1 -c_2 g_2=c_3.$ This case corresponds to condition (3) in the conclusion.

Next, we discuss the case where both sides of \eqref{SSSS} are rank-one operators.
There exists a constant $\lambda$ such that 
$f_1-\lambda f_2,\bar{g}_1-\bar{\lambda}\bar{g}_2,\bar{f}_1-\bar{\lambda}\bar{f}_2-(\bar{g}_1-\bar{\lambda}\bar{g}_2)$
and $f_1-\lambda f_2-(g_1-\lambda g_2)$ are in $H^{\infty}.$ Therefore,
\begin{align}\label{f1f2}
f_1-\lambda f_2=\mu_1\ \text{and} \ g_1-\lambda g_2=\mu_2
\end{align} 
are constants.
Substituting \eqref{f1f2} into \eqref{H1T2} and \eqref{H2T1} yields $(\mu_1-\mu_2)H_{\bar{g}_2}=0$ and $(\mu_1-\mu_2)H_{f_2}=0.$
 If $\mu_1\neq \mu_2,$ then $\bar{g}_2,f_2\in H^{\infty}.$
This contradicts the assumption that both sides of \eqref{SSSS} are rank one.
This case corresponds to condition (3) in the conclusion.

Conversely, conditions \eqref{5.6.3}   directly imply that $S_{f_1,g_1}$ and $S_{f_2,g_2}$ commute.
By Theorem \ref{S1},
condition \eqref{5.6.1} or \eqref{5.6.2} implies that $S_{f_1,g_1}S_{f_2,g_2}=S_{f_2,g_2}S_{f_1,g_1}=S_{f_1 f_2,g_1 g_2}.$ 
\end{proof}
\begin{theorem}\label{normals}
	Let $f$ and $g$ be functions in $L^{\infty}.$ 
	Then $S_{f,g}$ is normal 
	if and only if one of the following statements holds.
	\begin{enumerate}
		\item $f$ and $g$ are constants.
		\item There exists a unimodular constant $\lambda$  such that $f-\lambda g$ and $\lambda \bar{f}g-f\bar{g}$ are constants.
	\end{enumerate}
	\end{theorem}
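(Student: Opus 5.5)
Normality of $S_{f,g}$ means $S_{f,g}S^{*}_{f,g}=S^{*}_{f,g}S_{f,g}$, that is, that the two GSIOs
$R_{H_1}$ and $R_{H_2}$ commute, where
$H_1=\begin{bmatrix}\begin{smallmatrix} f& g\\ f & g\end{smallmatrix}\end{bmatrix}$ and
$H_2=\begin{bmatrix}\begin{smallmatrix} \bar f& \bar f\\ \bar g & \bar g\end{smallmatrix}\end{bmatrix}$.
I would not apply Theorem \ref{main2} with all its case analysis. Instead I would compute both products explicitly and compare entries.

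By \eqref{StarS}, $S^{*}_{f,g}S_{f,g}=R_{\left[\begin{smallmatrix}|f|^2 & \bar f g\\ f\bar g & |g|^2\end{smallmatrix}\right]}$, with off-diagonal entries $H^{*}_{f\bar g}$ and $H_{f\bar g}$. For $S_{f,g}S^{*}_{f,g}=S_{f,g}R_{H_2}$ I would compute directly. Since $S_{f,g}=M_fP_++M_gP_-$ and $R_{H_2}=P_+M_{\bar f}+P_-M_{\bar g}$, we get
\[
S_{f,g}S_{f,g}^{*}=fP_+\bar f+gP_-\bar g = M_{|g|^2}+M_f P_+ M_{\bar f}-M_gP_+M_{\bar g}.
\]
Its four compressions are:
\begin{itemize}
\item upper-left: $T_{|g|^2}+T_fT_{\bar f}-T_gT_{\bar g}$;
\item lower-right: $\tilde T_{|g|^2}+H_fH^{*}_{f}-H_gH^{*}_{g}$;
\item lower-left: $H_{|g|^2}+H_fT_{\bar f}-H_gT_{\bar g}$;
\item upper-right: the adjoint of the lower-left entry.
\end{itemize}

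Equating with $S^*S$, the diagonal entries give
\[
T_{|f|^2-|g|^2}=T_fT_{\bar f}-T_gT_{\bar g},
\]
that is, $H^{*}_{\bar f}H_{\bar f}=H^{*}_{\bar g}H_{\bar g}$. The lower-right entries give $H_fH_f^{*}-H_gH_g^{*}=\tilde T_{|f|^2-|g|^2}$. By Lemma \ref{k1}, this forces both sides to be $0$, so $|f|^2-|g|^2$ is constant and $H_fH^{*}_f=H_gH^{*}_g$.

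The two rank-one identities from Lemma \ref{k1} read
\[
(\bar f)_-\otimes(\bar f)_-=(\bar g)_-\otimes(\bar g)_- \quad\text{and}\quad f_-\otimes f_-=g_-\otimes g_-.
\]
They give $(\bar f)_-=\bar\lambda_1(\bar g)_-$ and $f_-=\lambda_2 g_-$ with unimodular $\lambda_1,\lambda_2$, or both vanish. In every case I would deduce that $f-\lambda g$ is constant for some unimodular $\lambda$, or that $f,g$ are constants. The subcase $\lambda_1\ne\lambda_2$ needs care: there $f-\lambda_1 g$ is coanalytic and $f-\lambda_2 g$ is analytic.

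Next, the off-diagonal condition. By Lemma \ref{k2}, $H_{|g|^2}+H_fT_{\bar f}-H_gT_{\bar g}$ must be the Hankel operator $H_{f\bar g}$. Substituting $f=\lambda g+c$ and using \eqref{eq:2m}, \eqref{HT} collapses it to $H_{\lambda\bar f g}$, so the condition is $H_{\lambda \bar f g-f\bar g}=0$. The same condition from the adjoint entry (with $\bar\lambda$) shows that $\lambda\bar f g-f\bar g$ is both analytic and coanalytic, hence constant. Conversely, given (2), I would reverse these substitutions to obtain equality of all four entries; case (1) is trivial since then $S_{f,g}$ is a scalar multiple of... (a normal combination of the projections).

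The main obstacle is the mixed case $\lambda_1\ne\lambda_2$ in the rank-one identities. There I would use $|f|^2-|g|^2$ constant together with the analytic and coanalytic relations. Writing $f=\lambda_2 g+h$ with $h\in H^\infty$ and $\bar f=\bar\lambda_1\bar g+\bar k$, I expect to force either $g_-=0=(\bar g)_-$, making $g$ and hence $f$ constant, or $\lambda_1=\lambda_2$.
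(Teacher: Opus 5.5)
Your proposal has a genuine gap in the case you yourself flag as the main obstacle, $\lambda_1\neq\lambda_2$, and the tool you propose for it is not available. Your lower-right comparison is miscomputed. The lower-right entry of $S^{*}_{f,g}S_{f,g}$ is $\tilde{T}_{|g|^2}$, which is exactly the $M_{|g|^2}$-part of the lower-right entry of $S_{f,g}S^{*}_{f,g}$. So the correct equation is just $H_fH_f^{*}=H_gH_g^{*}$, and nothing about $|f|^2-|g|^2$ follows. (Your own equation would even have given $|f|^2=|g|^2$, not ``constant''.) In fact $|f|^2-|g|^2$ need not be constant when $S_{f,g}$ is normal: take $g=z+\bar z$ and $f=g+1$. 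Then $S_{f,g}=M_g+P_+$ is self-adjoint, while $|f|^2-|g|^2=2g+1$. So your plan for the mixed case rests on a false statement, and as written that case is left open.

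The missing idea is that the off-diagonal entry already couples $\lambda_1$ and $\lambda_2$; you only use that entry after $f-\lambda g$ is known to be constant. Since $H_{|g|^2}$ and $H_{f\bar g}$ are Hankel operators, the lower-left equation forces $H_fT_{\bar f}-H_gT_{\bar g}$ to be Hankel, and Lemma \ref{k2} gives
\[
f_{-}\otimes V(\bar f)_{-}=g_{-}\otimes V(\bar g)_{-}.
\]
Substitute $f_-=\lambda_2g_-$ and $V(\bar f)_-=\lambda_1V(\bar g)_-$ (recall $V$ is antilinear). The identity becomes $(\lambda_2\bar\lambda_1-1)\,g_-\otimes V(\bar g)_-=0$, so $\lambda_1=\lambda_2$ whenever $g_-\neq0\neq(\bar g)_-$.

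The paper gets this for free. Via Lemma \ref{key}, the two diagonal and two off-diagonal rank-one identities merge into the single identity $\brackbinom{V(\bar f)_-}{f_-}\otimes\brackbinom{V(\bar f)_-}{f_-}=\brackbinom{V(\bar g)_-}{g_-}\otimes\brackbinom{V(\bar g)_-}{g_-}$. Its stacked vectors produce one unimodular $\lambda$ for both components, so the mixed case never arises.

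With this repair the rest of your route works. Your explicit reduction of the off-diagonal condition to $H_{\lambda\bar fg-f\bar g}=0$ is a cleaner substitute for the paper's appeal to case (13) of Theorem \ref{mainc1}. One further correction there: the upper-right comparison is just the adjoint of the lower-left one, since both products are self-adjoint, so it yields nothing new. Constancy of $\phi=\lambda\bar fg-f\bar g$ follows instead from $\bar\phi=-\bar\lambda\phi$ when $|\lambda|=1$, so $\phi\in H^\infty$ already forces $\bar\phi\in H^\infty$.
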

\begin{proof}
Suppose that $S_{f,g}$ is normal, and since 
$S_{f,g}=R_{\begin{bmatrix}
	\begin{smallmatrix}
	   f& g\\
	   f & g
	\end{smallmatrix}
	\end{bmatrix}},$
$S^{*}_{f,g}=R_{\begin{bmatrix}
	\begin{smallmatrix}
	   \bar{f}& \bar{f}\\
	   \bar{g} & \bar{g}
	\end{smallmatrix}
	\end{bmatrix}},$ by Theorem \ref{main2}, it follows that
\begin{align*}
\brackbinom{V(\bar{f})_{-}}{f_{-}}\otimes \brackbinom{V(\bar{f})_{-}}{f_{-}}
-&\brackbinom{V(\bar{g})_{-}}{g_{-}}\otimes \brackbinom{V(\bar{g})_{-}}{g_{-}}  \\
=\brackbinom{Vf_{-}}{(\bar{g})_{-}}\otimes \brackbinom{Vf_{-}}{(\bar{g})_{-}}
-&\brackbinom{Vf_{-}}{(\bar{g})_{-}}\otimes \brackbinom{Vf_{-}}{(\bar{g})_{-}}
=0.
\end{align*}

There are two cases.

If $\brackbinom{V(\bar{f})_{-}}{f_{-}}=\brackbinom{V(\bar{g})_{-}}{g_{-}}=0,$ then $f$ and $g$ are constants.
Conversely, by direct computation, if $f$ and $g$ are constants, then $S_{f,g}$ is normal.

If there exists a unimodular constant  $\lambda$ such that $\brackbinom{V(\bar{f})_{-}}{f_{-}}=\lambda \brackbinom{V(\bar{g})_{-}}{g_{-}},$
then $f-\lambda g$ is constant. This case corresponds to  (13) of Theorem \ref{mainc1}.
Consequently,  $\mu=1$ and $\lambda \bar{f}g-f\bar{g}$ is constant. Conversely, by (13) of Theorem \ref{mainc1},
if (2) in the conclusion holds, then $S_{f,g}$ is normal.
\end{proof}
\begin{remark}
Both \cite[Theroem 3.2]{Nakazi2014} and \cite[Corollary 5.7]{Gu2016}
characterize normal singular integral operators and present the following condition:
\begin{quote}
\itshape
there exist constants $\lambda$ and $\delta$ such that
\begin{align}\label{nakazi}
|\lambda|=1, \quad f=\lambda g+\delta, \quad(\lambda-1)|g|^2+\delta \bar{g}-\bar{\delta} \lambda g \in \mathbb{C}.	
\end{align}
\end{quote}
Since
\begin{align*}
&(\lambda -1)|g|^{2}+\delta\bar{g}-\bar{\delta}\lambda g\\
=&\lambda |g|^{2}-|g|^{2}+(f-\lambda g)\bar{g}-(\bar{f}-\bar{\lambda}\bar{g})\lambda g\\
=&f\bar{g}-\lambda \bar{f}g,
\end{align*}
\eqref{nakazi} is  equivalent to the condition (2) of Theorem \ref{normals}.

\end{remark}
\begin{theorem}\label{quasinormal}
	Let $f$ and $g$ be functions in $L^{\infty}.$ 
	Then $S_{f,g}$ is quasinormal 
	if and only if one of the following statements holds.
\begin{enumerate}
	\item $|f|$ and $|g|$ are constants, $f,\bar{g}\in H^{\infty};$
	\item $|f|=|g|$ is constants, $f\bar{g}\in H^{\infty};$
	\item $f-g$ and $f|g|^2 -g|f|^2$ are constants, and $|f|^{2}_{-}=|g|^{2}_{-}=(f\bar{g})_{-};$
	\item $\brackbinom{Vf_{-}}{(\bar{g})_{-}}\neq 0,\brackbinom{V(|f|^{2}-f\bar{g})_{-}}{(f\bar{g}-|g|^{2})_{-}}\neq 0$ and one of the following statements holds.
	\begin{enumerate}
			\item There are  constants $\mu$ and $\alpha(\neq 0)$ such that $f-\mu g$ is constant,
	 $|f|^{2}-f\bar{g}-\bar{\alpha}\bar{g},|g|^{2}-f\bar{g}+\alpha g,
	 f(\mu \bar{f}-\bar{g}-\alpha),
	 \bar{g}(\bar{\mu}f-g-\bar{\alpha}),
	 fg(\mu \bar{f}-\bar{g}-\alpha),
	 \bar{g}^{2}(\bar{\mu}f-g-\bar{\alpha})\in H^{\infty};$
	\item There are constants $\mu$ and $\alpha(\neq 0)$ such that $g-\mu f$ is constant,
 $|f|^{2}-f\bar{g}-\bar{\alpha}\bar{f},|g|^{2}-f\bar{g}+\alpha f,f(\bar{f}-\mu \bar{g}-\alpha),
 \bar{g}(f-\bar{\mu}g-\bar{\alpha}),
 \bar{f}\bar{g}(f -\bar{\mu}g-\bar{\alpha}),
f^2(\bar{f}-\mu \bar{g}-\alpha)\in H^{\infty};$
   \item There are  constants $\mu$ and $\alpha(\neq 0)$ such that $f(\bar{f}-\bar{\mu}\bar{g}),\bar{g}(f-\mu g)$,
 $|f|^{2}-f\bar{g}-\bar{\alpha}(\mu\bar{f}-\bar{g}),
 f\bar{g}-|g|^{2}-\alpha (\bar{\mu}f-g),
 f\bar{g}-\alpha f,
 |g|^2 -\bar{\alpha}\bar{g},
\bar{g}(|f|^2-|g|^2-\bar{\alpha}\mu\bar{f}+\bar{\alpha}\bar{g}),
f(|f|^{2}-|g|^2-\alpha\bar{\mu} f+\alpha  g)\in H^{\infty};$
  \item There are  constants $\mu$ and $\alpha(\neq 0)$ such that $f(\bar{g}-\bar{\mu}\bar{f}),\bar{g}(g-\mu f),$
$|f|^{2}-f\bar{g}-\bar{\alpha}(\bar{f}-\mu\bar{g}),
f\bar{g}-|g|^{2}-\alpha (f-\bar{\mu}g),
f\bar{g}-\bar{\alpha}\bar{g},
|f|^2 -\alpha f,
\bar{g}(|f|^2-|g|^2+\bar{\alpha}\mu\bar{g}-\bar{\alpha}\bar{f}),
f(|f|^{2}-|g|^2-\alpha \bar{\mu} g+\alpha  f)\in H^{\infty}.$
	\end{enumerate}
\end{enumerate}
\end{theorem}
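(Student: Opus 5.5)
Quasinormality means $S_{f,g}$ commutes with $S^{*}_{f,g}S_{f,g}$. By \eqref{StarS}, $S^{*}_{f,g}S_{f,g}=R_{H_2}$ with $H_2=\begin{bmatrix}\begin{smallmatrix}|f|^2&\bar f g\\ f\bar g&|g|^2\end{smallmatrix}\end{bmatrix}$, while $S_{f,g}=R_{H_1}$ with $H_1=\begin{bmatrix}\begin{smallmatrix}f&g\\ f&g\end{smallmatrix}\end{bmatrix}$. So the task reduces to the commutativity problem $R_{H_1}R_{H_2}=R_{H_2}R_{H_1}$, and I will apply Theorem \ref{main2} with $f_1=v_1$ replaced by the pattern $f_1=u_1=f$, $g_1=v_1=g$ and $f_2=|f|^2$, $u_2=\bar f g$, $g_2=f\bar g$, $v_2=|g|^2$.

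First I write \eqref{WW} for this pair. On the left, the first column vectors $\brackbinom{V(\bar f)_-}{f_-}$ and $\brackbinom{V(\bar g)_-}{g_-}$ appear. The other vectors are $\brackbinom{V(|f|^2)_-}{(f\bar g)_-}$ and $\brackbinom{V(f\bar g)_-}{(|g|^2)_-}$. On the right, $\brackbinom{V(|f|^2)_-}{(f\bar g)_-}$ pairs against $\brackbinom{Vf_-}{(\bar g)_-}$, minus $\brackbinom{V(f\bar g)_-}{(|g|^2)_-}$ paired against the same vector $\brackbinom{Vf_-}{(\bar g)_-}$, since $g_1=f_1$ in the lower slot. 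Hence the right side collapses to the single rank $\le 1$ operator
\[
\brackbinom{V(|f|^2-f\bar g)_-}{(f\bar g-|g|^2)_-}\otimes\brackbinom{Vf_-}{(\bar g)_-}.
\]
The case split is then dictated by the rank of this operator. Either one of the two factors vanishes, giving rank $0$, or both are nonzero, giving rank $1$.

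\textbf{Rank zero.} Theorem \ref{mainc1} applies, and I would run through its list, specialising to $f_1=u_1$, $g_1=v_1$. The subcase $\brackbinom{Vf_-}{(\bar g)_-}=0$ means $f,\bar g\in H^\infty$. Feeding this into the remaining equations \eqref{1.2}, \eqref{2.1} and the left side of \eqref{WW} should force $|f|,|g|$ constant, which gives (1).

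The subcase $(|f|^2-f\bar g)_-=(f\bar g-|g|^2)_-=0$ is handled via \eqref{WW} with zero left side. It corresponds to Theorem \ref{mainc1} (13) with $\lambda=1$, or to the cases where $f-g$ is constant, and should give (2) or (3). For (3) I expect to use \eqref{casec} with $\lambda=1$, compare the off-diagonal Hankel symbols, and obtain that $f|g|^2-g|f|^2$ is constant.

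\textbf{Rank one.} Theorem \ref{mainc2} applies, and I would specialise its sixteen cases. Many collapse or are impossible because the left factors come in the special form $\brackbinom{V(\bar f)_-}{f_-}$, whose two components are tied together. This is why only four families (4)(a)--(d) survive, indexed by which of (L1)--(L4) combines with which (R) case. For each surviving case I substitute the proportionality relations and then read off the $H^\infty$ conditions from the two Hankel equations \eqref{1.2}, \eqref{2.1}, exactly as in the worked case (L1)+(R1). Sufficiency in every case follows by reversing these computations through Theorem \ref{main2}.

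\textbf{Main obstacle.} The hard part is the rank-one bookkeeping. I need to show that the remaining combinations in the table of Theorem \ref{mainc2} are either contradictory under $f_1=u_1$, $g_1=v_1$, or reduce to one of (4)(a)--(d). I also need to simplify the resulting symbol expressions, which are products such as $fg(\mu\bar f-\bar g-\alpha)$, into the stated form. I would handle this case by case, with the symmetry $f\leftrightarrow\bar g$ (adjoint/$V$-conjugation) used to pair (a) with (b) and (c) with (d).
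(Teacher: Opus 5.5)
Your overall reduction matches the paper's. You use \eqref{StarS} to turn quasinormality into commutation of $R_{H_1}$ and $R_{H_2}$, apply Theorem \ref{main2} to get \eqref{WW} together with \eqref{1.2} and \eqref{2.1}, observe that one side of \eqref{WW} collapses to $\brackbinom{V(|f|^2-f\bar g)_-}{(f\bar g-|g|^2)_-}\otimes\brackbinom{Vf_-}{(\bar g)_-}$, and split by its rank.

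The gap is in the rank-one step: the reason you give for cutting sixteen cases down to four is wrong.

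The tied form of $\brackbinom{V(\bar f)_-}{f_-}$ and $\brackbinom{V(\bar g)_-}{g_-}$ eliminates none of (L1)--(L4) on the non-collapsed side. It merely turns (L1) into ``$f-\lambda g$ is constant''.

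On the collapsed side, (R1) and (R2) are not contradictory either. With $f_2=|f|^2$, $u_2=\bar f g$, $g_2=f\bar g$, $v_2=|g|^2$, their proportionality relations read $f(\bar f-\bar\mu\bar g),\ \bar g(f-\mu g)\in H^\infty$, respectively $f(\bar g-\bar\mu\bar f),\ \bar g(g-\mu f)\in H^\infty$. These are the opening conditions of (4)(c) and (4)(d) in the very statement you are proving. So your subgoal of showing the remaining combinations contradictory cannot be met.

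The missing observation is this. On the collapsed side you have $\brackbinom{Vf_{1-}}{(\bar u_1)_-}=\brackbinom{Vg_{1-}}{(\bar v_1)_-}=\brackbinom{Vf_-}{(\bar g)_-}$ identically. Hence, in the rank-one situation, alternative (R3) holds with $\mu=1$ automatically. Its side conditions, $\brackbinom{Vf_-}{(\bar g)_-}\neq 0$ and $\brackbinom{V(|f|^2-f\bar g)_-}{(f\bar g-|g|^2)_-}\neq 0$, are precisely the rank-one hypotheses.

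The proof of Theorem \ref{mainc2} derives each case from whichever pair of alternatives is in force. So you may fix (R3) with $\mu=1$ and let only the other side run through (L1)--(L4). This gives cases (9)--(12) of Theorem \ref{mainc2} with $\mu=1$, which after renaming the constants are (4)(a)--(d). Every combination involving (R1), (R2) or (R4) is then redundant, not impossible.

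In the paper's labeling, with $H_1$ the symbol of $S^*_{f,g}S_{f,g}$, this is the one-line remark that (L3) holds with $\lambda=1$. That is why only cases (3), (7), (11), (15) are used there.

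A few smaller points.

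The substitution is $f_1=g_1=f$, $u_1=v_1=g$, not $f_1=u_1$, $g_1=v_1$. Your computation of \eqref{WW} already uses the correct one, but specialising Theorems \ref{mainc1} and \ref{mainc2} as you wrote it would produce wrong conditions.

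In the rank-zero case the paper does not go through the list of Theorem \ref{mainc1}:
\begin{itemize}
\item If $\brackbinom{Vf_-}{(\bar g)_-}=0$, Lemma \ref{key} applied to the other, now vanishing, side forces $|f|$ and $|g|$ to be constant.
\item If $(|f|^2)_-=(f\bar g)_-=(|g|^2)_-$, the other side factors as $\brackbinom{V(\bar f-\bar g)_-}{(f-g)_-}\otimes\brackbinom{V(|f|^2)_-}{(|f|^2)_-}$. Either $|f|$ and $|g|$ are constant, and then \eqref{1.2}, \eqref{2.1} give $|f|=|g|$ because one of $f,\bar g$ lies outside $H^\infty$; this is (2). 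Or $f-g$ is constant, and the Hankel equations give (3).
\end{itemize}
Your route through Theorem \ref{mainc1} can be made to work, but it hides this simple dichotomy.

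Finally, the symmetry you invoke is valid: $VS_{f,g}V=S_{\bar g,\bar f}$, and conjugation by the anti-unitary involution $V$ preserves quasinormality. The paper does not use it.
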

\begin{proof}
Using \eqref{StarS}, we have
$S_{f,g}$ is quasinormal is equivalent to 
$R_{\begin{bmatrix}
	\begin{smallmatrix}
	   |f|^2 & \bar{f}g\\
	   f\bar{g} & |g|^{2}
	\end{smallmatrix}
	\end{bmatrix}}$
and 
$R_{\begin{bmatrix}
	\begin{smallmatrix}
	   f & g\\
	   f & g
	\end{smallmatrix}
	\end{bmatrix}}$  commute.
By Theroem \ref{main2}, we have $S_{f,g}$ is quasinormal
if and only if 
\begin{align}
H_{\bar{g}}T_{|f|^{2}}+\tilde{T}_{\bar{g}}H_{f\bar{g}}=&H_{f\bar{g}}T_{\bar{f}}+\tilde{T}_{|g|^2}H_{\bar{g}};\label{q1}\\
H_{f\bar{g}}T_{f}+\tilde{T}_{|g|^{2}}H_{f}=&H_{f}T_{|f|^{2}}+\tilde{T}_{g}H_{f\bar{g}};\label{q2}\\
\brackbinom{V(|f|^{2}-f\bar{g})_{-}}{(f\bar{g}-|g|^{2})_{-}}\otimes \brackbinom{Vf_{-}}{(\bar{g})_{-}}
=&\brackbinom{V(\bar{f})_{-}}{f_{-}}\otimes \brackbinom{V(|f|^{2})_{-}}{(f\bar{g})_{-}} \nonumber \\
-&\brackbinom{V(\bar{g})_{-}}{g_{-}}\otimes \brackbinom{V(f\bar{g})_{-}}{(|g|^{2})_{-}}.\label{q3}
\end{align}
We examine equation \eqref{q3} in the following cases:
\begin{enumerate}[label=\textbf{Case \Roman*:}, leftmargin=2cm]
\item If $\brackbinom{Vf_{-}}{(\bar{g})_{-}}=0,$ then $f,\bar{g}\in H^{\infty}$ and $V(\bar{f})_{-}\otimes V(|f|^{2})_{-}=0,$
$g_{-}\otimes (|g|^{2})_{-}=0$ by Lemma \ref{key}. Although there are four cases, all four cases result in $|f|$ and $|g|$ being constant.
Hence, we obtain conclusion item (1).
\item If $\brackbinom{V(|f|^{2}-f\bar{g})_{-}}{(f\bar{g}-|g|^{2})_{-}}=0,$ then $|f|^{2}_{-}=|g|^{2}_{-}=(f\bar{g})_{-}$
and $\brackbinom{V(\bar{f}-\bar{g})_{-}}{(f-g)_{-}}\otimes \brackbinom{V(|f|^{2})_{-}}{(|f|^{2})_{-}}=0.$
         \begin{enumerate}[label=\textbf{Case \roman*:}, leftmargin=2cm]
			         \item If $\brackbinom{V(|f|^{2})_{-}}{(|f|^{2})_{-}}=0,$ then $|f|$ is constant.
         Moreover, $|g|$ is constant, and $f\bar{g}\in H^{\infty}.$ \eqref{q1} implies $H_{\bar{g}(|f|^2-|g|^2)}=0.$ If $\bar{g}\notin H^{\infty},$ 
		 then $|f|=|g|.$ Similarly, \eqref{q2} implies $H_{f(|f|^2-|g|^2)}=0.$ If $f\notin H^{\infty},$ then then $|f|=|g|.$ 
		 This leads us to (2) in the conclusion.
			\item If $\brackbinom{V(\bar{f}-\bar{g})_{-}}{(f-g)_{-}}=0,$ then $f-g$ is constant. Calculate both sides of Equation \eqref{q1}, we have 
\begin{align*}
H_{\bar{g}}T_{|f|^{2}}+\tilde{T}_{\bar{g}}H_{f\bar{g}}
&=H_{\bar{g}}T_{|f|^{2}}+\tilde{T}_{\bar{g}}H_{|f|^{2}}
=H_{\bar{g}|f|^2},\\
H_{f\bar{g}}T_{\bar{f}}+\tilde{T}_{|g|^2}H_{\bar{g}}
&=H_{|g|^2}T_{\bar{f}}+\tilde{T}_{|g|^2}H_{\bar{f}}
=H_{\bar{f}|g|^2}.
\end{align*}
Therefore, $\bar{g}|f|^2-\bar{f}|g|^2\in H^{\infty}.$ Similarly, calculate both sides of \eqref{q2}, we have 
$g|f|^2-f|g|^2\in H^{\infty}.$ This means that $g|f|^2-f|g|^2$ is constant. Thus we have reached (3) in the conclusion.
\end{enumerate}
\item If $\brackbinom{V(|f|^{2}-f\bar{g})_{-}}{(f\bar{g}-|g|^{2})_{-}}\otimes \brackbinom{Vf_{-}}{(\bar{g})_{-}}$ is a rank one operator, then 
this case corresponds to cases (3), (7), (11), and (15) in Theorem \ref{mainc2}, where  $\lambda=1.$ Therefore, we arrive at conclusion 
(4)(a),(4)(b),(4)(c), and (4)(d).
\end{enumerate}

Conversely, based on the above analysis, each condition in the conclusion ensures the validity of 
\eqref{q1},\eqref{q2}, and \eqref{q3}, thus establishing the necessity of the conclusion.

\end{proof}
\begin{remark}
Theorem 2.6 in \cite{ko2021quasinormality} proves that 
$S_{f,g}$ is quasinormal if and only if the following identities hold:
\begin{align*}
T_{|f|^{2}}T_f+H^{*}_{f\bar{g}}H_f&=T_{f}T_{|f|^2}+H^{*}_{\bar{g}}H_{f\bar{g}};\\
T_{|f|^{2}}H^{*}_{\bar{g}}+H^{*}_{f\bar{g}}\tilde{T}_{g}&=T_{f}H^{*}_{f\bar{g}}+H^{*}_{\bar{g}}\tilde{T}_{|g|^2};\\
H_{f\bar{g}}T_{f}+\tilde{T}_{|g|^{2}}H_{f}&=H_{f}T_{|f|^{2}}+\tilde{T}_{g}H_{f\bar{g}};\\
H_{f\bar{g}}H^{*}_{\bar{g}}+\tilde{T}_{|g|^{2}}\tilde{T}_{g}&=H_{f}H^{*}_{f\bar{g}}+\tilde{T}_{g}\tilde{T}_{|g|^2}.
\end{align*}
This means that 
$R_{\begin{bmatrix}
	\begin{smallmatrix}
	   |f|^2 & \bar{f}g\\
	   f\bar{g} & |g|^{2}
	\end{smallmatrix}
	\end{bmatrix}}$
and 
$R_{\begin{bmatrix}
	\begin{smallmatrix}
	   f & g\\
	   f & g
	\end{smallmatrix}
	\end{bmatrix}}$  commute.
\end{remark}
\begin{remark}
The conditions (4)(a) to (4)(d) in Theorem \ref{quasinormal}  appear  complex, 
but this does not imply that the functions $f$ and $g$ satisfying them are trivial. 
Below we will demonstrate how  $f$ and $g$ satisfying that $S_{f,g}$ is a normal operator fulfill condition (4)(a) in Theorem \ref{quasinormal}.

If $f$ and $g$ satisfying condition (2) in Theorem \ref{normals}.
Let $f=\mu g+\delta,$ where $\mu$ and $\delta$ are constants with $|\mu|=1.$
$\alpha=\bar{\delta}\mu.$ Thus, $\bar{\mu}f-g-\bar{\alpha}=0.$ The last four expressions in (4)(a) equal zero;
 only the first two need to be calculated.
\begin{align*}
|f|^{2}-f\bar{g}-\bar{\alpha}\bar{g}&=(1-\mu)|g|^2-\delta \bar{g}+\bar{\delta}\mu g+|\delta|^{2}\in H^{\infty}, (see \eqref{nakazi})\\
|g|^{2}-f\bar{g}+\alpha g&=(1-\mu)|g|^2-\delta \bar{g}+\bar{\delta}\mu g\in H^{\infty},(see \eqref{nakazi}).
\end{align*}
\end{remark}
\section{appendix A}
In the earlier version of the paper, we proved Proposition \ref{20} and subsequently used it to prove Theorem \ref{main1}. 
Unfortunately, we were unable to establish Theorem \ref{main2} analogously. Later, we discovered Lemma \ref{key}, 
which uniformly handles both Problem \ref{Q11} and Problem \ref{Q22}, while also yielding conclusions regarding 
the matrices we initially employed.

\begin{proposition}\label{20}
	Let $A=(a_{ij})$ and $B=(b_{ij})$ be two $2\times 2$ complex matrices.
	 Then $AB=0$ if and only if one of the following statements holds:
	\begin{enumerate}
	\item  either $A=0$ or $B=0;$
	\item $A=\begin{bmatrix}a_{11}& 0\\
			a_{21} & 0 \end{bmatrix}\neq 0,\
	B=\begin{bmatrix}0 & 0\\
			b_{21} & b_{22} \end{bmatrix}\neq 0;$
	\item there exists a constant $\mu$
	such that
	\begin{align*}
	A=\begin{bmatrix}\mu a_{12}& a_{12}\\
			\mu a_{22} & a_{22} \end{bmatrix}\neq 0,\
	B=\begin{bmatrix}b_{11} & b_{12}\\
			-\mu b_{11} & -\mu b_{12} \end{bmatrix}\neq 0.
	\end{align*}
	\end{enumerate}
	\end{proposition}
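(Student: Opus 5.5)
The plan is a direct elementary argument organized by the rank of $A$ and $B$. Sufficiency is a one-line multiplication in each case. In (2), the product vanishes because the second column of $A$ and the first row of $B$ are zero, and $AB=\begin{bmatrix}a_{11}\cdot 0+0\cdot b_{21}&\cdots\end{bmatrix}$ reduces entrywise to $0$. In (3), write $A=\begin{bmatrix}a_{12}\\ a_{22}\end{bmatrix}\begin{bmatrix}\mu & 1\end{bmatrix}$ and $B=\begin{bmatrix}1\\ -\mu\end{bmatrix}\begin{bmatrix}b_{11}& b_{12}\end{bmatrix}$. Then $AB$ contains the scalar factor $\begin{bmatrix}\mu & 1\end{bmatrix}\begin{bmatrix}1\\ -\mu\end{bmatrix}=0$, so $AB=0$.

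For necessity, assume $AB=0$ with $A\neq0$ and $B\neq0$. Then the range of $B$ is contained in $\ker A$. Since $B\neq 0$, $\ker A\neq\{0\}$, and since $A\neq0$, $\ker A\neq\mathbb{C}^2$. So both $A$ and $B$ have rank one, and $\ker A$ is a line that contains the range of $B$. We can therefore write $A=p\,r^{T}$ and $B=q\,s^{T}$ with nonzero column vectors $p,r,q,s\in\mathbb{C}^2$, and the condition $AB=0$ becomes $r^{T}q=0$. This is the analogue of Lemma \ref{key} in finite dimensions.

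Next split according to whether the second entry of $r$ vanishes.
\begin{enumerate}
\item If $r_2=0$, then $r_1\neq0$, so $r^{T}q=0$ forces $q_1=0$. Hence $A$ has zero second column and $B$ has zero first row, which is exactly (2).
\item If $r_2\neq0$, normalize so that $r=(\mu,1)^{T}$ with $\mu=r_1/r_2$. Then $A=\begin{bmatrix}\mu a_{12}& a_{12}\\ \mu a_{22}& a_{22}\end{bmatrix}$. The relation $\mu q_1+q_2=0$ gives $q=q_1(1,-\mu)^{T}$. Absorbing $q_1$ into $s$ yields $B=\begin{bmatrix}b_{11}&b_{12}\\-\mu b_{11}&-\mu b_{12}\end{bmatrix}$, which is (3).
\end{enumerate}

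The only step requiring care is the rank-one factorization and the choice of normalization. The dichotomy $r_2=0$ versus $r_2\neq0$ must exactly match the two nontrivial forms in the statement, and the nonvanishing conditions $A\neq0$, $B\neq0$ must be kept throughout. Everything else is routine.
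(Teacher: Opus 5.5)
Your proof is correct, but it takes a different route from the paper's.

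\textbf{The paper's route.} The paper expands $AB$ over the inner index as a sum of two pieces of rank at most one,
$AB=\brackbinom{a_{11}}{a_{21}}\otimes\brackbinom{\bar b_{11}}{\bar b_{12}}+\brackbinom{a_{12}}{a_{22}}\otimes\brackbinom{\bar b_{21}}{\bar b_{22}}$.
Each piece is column $k$ of $A$ times row $k$ of $B$; the printed proof has $a_{21}$ where $a_{12}$ is meant. The paper then leaves the case analysis implicit. Either both pieces vanish, which gives $A=0$, $B=0$, case (2), or case (3) with $\mu=0$. Or both pieces are nonzero and cancel, which forces the first column of $A$ to be $\mu$ times the second and the second row of $B$ to be $-\mu$ times the first, i.e.\ case (3) with $\mu\neq 0$. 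This is the same ``zero versus rank one'' dichotomy the paper applies to \eqref{rr} in Theorem \ref{main1}. That is why the result sits in an appendix presented as an illustration of Lemma \ref{key}.

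\textbf{Your route.} You use the fact that the range of $B$ lies in $\ker A$ to show both factors have rank exactly one. You write each as a single outer product and reduce everything to the scalar condition $r^{T}q=0$. Your split $r_2=0$ versus $r_2\neq 0$ then lands exactly on (2) and (3), with $\mu=0$ covered by $r_1=0$. Your case division therefore matches the statement's cases directly, whereas the paper's mixes them (its ``both vanish'' branch produces part of (3)).

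\textbf{What each buys.} Your argument is fully written out and self-contained. It also explains why (2) and (3) are the only nontrivial shapes: the line $\ker A$ is either the span of $(0,1)^{T}$ or the span of $(1,-\mu)^{T}$. The paper's version is terser and serves mainly to fit its operator-theoretic framework.

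\textbf{One small correction.} The identity $AB=p\,(r^{T}q)\,s^{T}$ is not an analogue of Lemma \ref{key}. It is just the composition rule for rank-one operators, whereas Lemma \ref{key} splits a block rank-one operator into its four component pieces.
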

\begin{proof}
For two complex numbers $z,w,$ we have $z\otimes w=z\bar{w}.$
If $AB=0,$ by Lemma \ref{key}, then 
\begin{align*}
\brackbinom{a_{11}}{a_{21}}\otimes \brackbinom{\bar{b}_{11}}{\bar{b}_{12}}
+\brackbinom{a_{21}}{a_{22}}\otimes \brackbinom{\bar{b}_{21}}{\bar{b}_{22}}=0.
\end{align*} 
The conclusion can be reached through case analysis of the above formula.
\end{proof}
\section{appendix B}\label{AII}

This appendix presents the remaining 15 uncalculated cases for Theorem \ref{mainc2}.
Similar to case (L1)+(R1),
when computing  \eqref{1.2} and \eqref{2.1}, first use the first two relations concerning $\lambda$ and $\mu$ along with \eqref{eq:2m}, 
combine like terms, then apply the relation involving $\alpha$ to obtain the final result.

In case \textbf{(L2)+(R1)}.
Substituting $\brackbinom{V(\bar{u}_{1})_{-}}{v_{1-}}=\lambda \brackbinom{V(\bar{f_{1}})_{-}}{g_{1-}}$
and $\brackbinom{V(\bar{f_{2}})_{-}}{g_{2-}}=\mu \brackbinom{V(\bar{u}_{2})_{-}}{v_{2-}}$ into \eqref{WW}
shows that there exists a non-zero constant $\alpha$ such that
\begin{align*}
\brackbinom{V(\bar{f_{1}})_{-}}{g_{1-}}=\alpha \brackbinom{V(\bar{u}_{2})_{-}}{v_{2-}}, \ \
\bar{\mu}\brackbinom{Vf_{1-}}{(\bar{u}_{1})_{-}}-\brackbinom{Vg_{1-}}{(\bar{v}_{1})_{-}}
=\bar{\alpha}\bigg(\brackbinom{Vf_{2-}}{(\bar{u}_{2})_{-}}-\bar{\lambda}\brackbinom{Vg_{2-}}{(\bar{v}_{2})_{-}}\bigg).
\end{align*}
Substituting the above conditions into  \eqref{1.2} and \eqref{2.1}, and applying \eqref{eq:2m}, we obtain
\begin{align*}
&H_{\bar{u}_{2}}T_{\bar{f}_{1}}+\tilde{T}_{\bar{v}_{2}}H_{\bar{u}_{1}}-H_{\bar{u}_{1}}T_{\bar{f}_{2}}-\tilde{T}_{\bar{v}_{1}}H_{\bar{u}_{2}}\\
=&H_{\bar{u}_{2}}T_{\bar{f}_{1}}+\tilde{T}_{\bar{v}_{2}}H_{\bar{\lambda}\bar{f}_1}
-H_{\bar{u}_{1}}T_{\bar{f}_{2}}-\tilde{T}_{\bar{v}_{1}}H_{\bar{u}_{2}}\\
=&H_{\bar{u}_{2}}T_{\bar{f}_{1}}+H_{\bar{\lambda}\bar{v}_2\bar{f}_1}-H_{\bar{v}_2}T_{\bar{\lambda}\bar{f}_1}
-(H_{\bar{u}_1\bar{f}_2}-\tilde{T}_{\bar{u}_1}H_{\bar{f}_2})-\tilde{T}_{\bar{v}_1}H_{\bar{u}_2} \\
=& H_{\bar{\lambda}\bar{v}_2\bar{f}_1-\bar{u}_1\bar{f}_2}+H_{\bar{u}_2-\bar{\lambda}\bar{v}_2}T_{\bar{f}_1}
+\tilde{T}_{\bar{u}_1}H_{\bar{\mu} \bar{u}_2}-\tilde{T}_{\bar{v}_1}H_{\bar{u}_2}\\
=& H_{\bar{\lambda}\bar{v}_2\bar{f}_1-\bar{u}_1\bar{f}_2}+H_{\bar{u}_2-\bar{\lambda}\bar{v}_2}T_{\bar{f}_1}
+\tilde{T}_{\bar{\mu}\bar{u}_1-\bar{v}_1}H_{\bar{u}_2}\\
=& H_{\bar{\lambda}\bar{v}_2\bar{f}_1-\bar{u}_1\bar{f}_2}+H_{\bar{u}_2-\bar{\lambda}\bar{v}_2}T_{\bar{f}_1}
+H_{(\bar{\mu}\bar{u}_1-\bar{v}_1)\bar{u}_2}-H_{\bar{\mu}\bar{u}_1-\bar{v}_1}T_{\bar{u}_2}\\
=& H_{\bar{\lambda}\bar{v}_2\bar{f}_1-\bar{u}_1\bar{f}_2+(\bar{\mu}\bar{u}_1-\bar{v}_1)\bar{u}_2}
+H_{\bar{u}_2-\bar{\lambda}\bar{v}_2}T_{\bar{f}_1}-H_{\bar{\alpha}(\bar{u}_2-\bar{\lambda}\bar{v}_2)}T_{\bar{u}_2}\\
=& H_{\bar{\lambda}\bar{v}_2\bar{f}_1-\bar{u}_1\bar{f}_2+(\bar{\mu}\bar{u}_1-\bar{v}_1)\bar{u}_2}
+H_{\bar{u}_2-\bar{\lambda}\bar{v}_2}T_{\bar{f}_1-\bar{\alpha}\bar{u}_2}\\
=& H_{\bar{\lambda}\bar{v}_2\bar{f}_1-\bar{u}_1\bar{f}_2+(\bar{\mu}\bar{u}_1-\bar{v}_1)\bar{u}_2
 +(\bar{u}_2-\bar{\lambda}\bar{v}_2)(\bar{f}_1-\bar{\alpha}\bar{u}_2)}\\
=&H_{\bar{u}_2\bar{f}_1-\bar{u}_1\bar{f}_2-\bar{\alpha}\bar{u}^{2}_2
+\bar{\lambda}\bar{\alpha}\bar{v}_2\bar{u}_2+\bar{\mu}\bar{u}_1\bar{u}_2-\bar{v}_1\bar{u}_2},
\end{align*}
and
\begin{align*}
&H_{g_{1}}T_{f_{2}}+\tilde{T}_{v_{1}}H_{g_{2}}-H_{g_{2}}T_{f_{1}}-\tilde{T}_{v_{2}}H_{g_{1}}\\
=&H_{g_{1}}T_{f_{2}}+H_{v_1 g_2}-H_{v_1}T_{g_2}-H_{\mu v_{2}}T_{f_{1}}-H_{v_2 g_1}+H_{v_2}T_{g_1}\\
=&H_{v_{1}g_{2}-v_2 g_1}+H_{g_1}T_{f_2 -\lambda g_2}+H_{v_2}T_{g_1-\mu f_1}\\
=&H_{v_{1}g_{2}-v_2 g_1}+H_{g_1}T_{f_2 -\lambda g_2}+H_{v_2}T_{g_1-\mu f_1}\\
=&H_{v_{1}g_{2}-v_2 g_1}+H_{\alpha v_2}T_{f_2 -\lambda g_2}+H_{v_2}T_{g_1-\mu f_1}\\
=&H_{v_{1}g_{2}-v_2 g_1}+H_{v_2}T_{\alpha(f_2 -\lambda g_2)+g_1-\mu f_1}\\
=&H_{v_{1}g_{2}-v_2 g_1}+H_{v_2\alpha(f_2 -\lambda g_2)+v_2(g_1-\mu f_1)}\\
=&H_{v_{1}g_{2}-v_2 g_1+v_2\alpha(f_2 -\lambda g_2)+v_2(g_1-\mu f_1)}\\
=&H_{g_{2}(v_1 -\lambda \alpha v_2)+v_2 (\alpha f_2-\mu f_1)}.
\end{align*}

In case \textbf{(L3)+(R1)}. Substituting $\brackbinom{Vf_{2-}}{(\bar{u}_{2})_{-}}=\lambda \brackbinom{Vg_{2-}}{(\bar{v}_{2})_{-}}$
and $\brackbinom{V(\bar{f_{2}})_{-}}{g_{2-}}=\mu \brackbinom{V(\bar{u}_{2})_{-}}{v_{2-}}$ into \eqref{WW}
shows that there exists a non-zero constant $\alpha$ such that\\
\begin{align*}
\bar{\lambda}\brackbinom{V(\bar{f_{1}})_{-}}{g_{1-}}-\brackbinom{V(\bar{u}_{1})_{-}}{v_{1-}}=\alpha \brackbinom{V(\bar{u}_{2})_{-}}{v_{2-}},\ \
\bar{\mu}\brackbinom{Vf_{1-}}{(\bar{u}_{1})_{-}}-\brackbinom{Vg_{1-}}{(\bar{v}_{1})_{-}}
=\bar{\alpha}\brackbinom{Vg_{2-}}{(\bar{v}_{2})_{-}}.
\end{align*}
Substituting the above conditions into  \eqref{1.2} and \eqref{2.1}, and applying \eqref{eq:2m}, we obtain
\begin{align*}
&H_{\bar{u}_{2}}T_{\bar{f}_{1}}+\tilde{T}_{\bar{v}_{2}}H_{\bar{u}_{1}}-H_{\bar{u}_{1}}T_{\bar{f}_{2}}-\tilde{T}_{\bar{v}_{1}}H_{\bar{u}_{2}}\\
=&H_{\lambda\bar{v}_2}T_{\bar{f}_{1}}+H_{\bar{v}_{2}\bar{u}_{1}}-H_{\bar{v}_{2}}T_{\bar{u}_{1}}
-H_{\bar{u}_{1}\bar{f}_{2}}+\tilde{T}_{\bar{u}_{1}}H_{\bar{f}_{2}}-\tilde{T}_{\bar{v}_{1}}H_{\bar{u}_{2}}\\
=&H_{\bar{v}_{2}\bar{u}_{1}-\bar{u}_{1}\bar{f}_{2}}+H_{\lambda\bar{v}_2}T_{\bar{f}_{1}}-H_{\bar{v}_{2}}T_{\bar{u}_{1}}
+\tilde{T}_{\bar{u}_{1}}H_{\bar{\mu}\bar{u}_{2}}-\tilde{T}_{\bar{v}_{1}}H_{\bar{u}_{2}}\\
=&H_{\bar{v}_{2}\bar{u}_{1}-\bar{u}_{1}\bar{f}_{2}}+H_{\bar{v}_2}T_{\lambda\bar{f}_{1}-\bar{u}_{1}}
+\tilde{T}_{\bar{\mu}\bar{u}_{1}-\bar{v}_{1}}H_{\bar{u}_{2}}\\
=&H_{\bar{v}_{2}\bar{u}_{1}-\bar{u}_{1}\bar{f}_{2}}+H_{\bar{v}_2(\lambda\bar{f}_{1}-\bar{u}_{1})}-\tilde{T}_{\bar{v}_2}H_{\lambda\bar{f}_{1}-\bar{u}_{1}}
+H_{\bar{u}_{2}(\bar{\mu}\bar{u}_{1}-\bar{v}_{1})}-H_{\bar{\mu}\bar{u}_{1}-\bar{v}_{1}}T_{\bar{u}_{2}}\\
=&H_{\bar{v}_{2}\bar{u}_{1}-\bar{u}_{1}\bar{f}_{2}+\bar{v}_2(\lambda\bar{f}_{1}-\bar{u}_{1})+\bar{u}_{2}(\bar{\mu}\bar{u}_{1}-\bar{v}_{1})}
-\tilde{T}_{\bar{v}_2}H_{\bar{\alpha}\bar{u}_2}
-H_{\bar{\alpha}\bar{v}_2}T_{\bar{u}_{2}}\\
=&H_{\bar{v}_{2}\bar{u}_{1}-\bar{u}_{1}\bar{f}_{2}+\bar{v}_2(\lambda\bar{f}_{1}-\bar{u}_{1})+\bar{u}_{2}(\bar{\mu}\bar{u}_{1}-\bar{v}_{1})-\bar{\alpha}\bar{v}_2\bar{u}_2}\\
=&H_{-\bar{u}_{1}\bar{f}_{2}+\lambda\bar{v}_2\bar{f}_{1}+\bar{u}_{2}(\bar{\mu}\bar{u}_{1}-\bar{v}_{1})-\bar{\alpha}\bar{v}_2\bar{u}_2},
\end{align*}
and
\begin{align*}
&H_{g_{1}}T_{f_{2}}+\tilde{T}_{v_{1}}H_{g_{2}}-H_{g_{2}}T_{f_{1}}-\tilde{T}_{v_{2}}H_{g_{1}}\\
=&H_{g_{1}f_{2}}-\tilde{T}_{g_{1}}H_{f_{2}}+\tilde{T}_{v_1}H_{g_2}-H_{\mu v_{2}}T_{f_{1}}-H_{v_2 g_1}+H_{v_2}T_{g_1}\\
=&H_{g_{1}f_{2}-v_2 g_1}+\tilde{T}_{v_1-\bar{\lambda}g_1}H_{g_2}+H_{v_2}T_{g_1-\mu f_1}\\
=&H_{g_{1}f_{2}-v_2 g_1}+H_{(v_1-\bar{\lambda}g_1)g_2}-H_{v_1-\bar{\lambda}g_1}T_{g_2}+H_{v_2(g_1-\mu f_1)}-\tilde{T}_{v_2}H_{g_1-\mu f_1}\\
=&H_{g_{1}f_{2}-v_2 g_1+(v_1-\bar{\lambda}g_1)g_2+v_2(g_1-\mu f_1)}-H_{v_1-\bar{\lambda}g_1}T_{g_2}-\tilde{T}_{v_2}H_{g_1-\mu f_1}\\
=&H_{g_{1}f_{2}-v_2 g_1+(v_1-\bar{\lambda}g_1)g_2+v_2(g_1-\mu f_1)}+H_{\alpha v_2}T_{g_2}+\tilde{T}_{v_2}H_{\alpha g_2}\\
=&H_{g_{1}f_{2}-v_2 g_1+(v_1-\bar{\lambda}g_1)g_2+v_2(g_1-\mu f_1)+\alpha v_2 g_2}\\
=&H_{g_{1}f_{2}+g_2 v_1-\bar{\lambda}g_1g_2-\mu f_1 v_2+\alpha v_2 g_2}.
\end{align*}

In case \textbf{(L4)+(R1)}. Substituting $\brackbinom{Vg_{2-}}{(\bar{v}_{2})_{-}}=\lambda \brackbinom{Vf_{2-}}{(\bar{u}_{2})_{-}}$
and $\brackbinom{V(\bar{f_{2}})_{-}}{g_{2-}}=\mu \brackbinom{V(\bar{u}_{2})_{-}}{v_{2-}}$ into \eqref{WW}
shows that there exists a non-zero constant $\alpha$ such that\\
\begin{align*}
\brackbinom{V(\bar{f_{1}})_{-}}{g_{1-}}-\bar{\lambda}\brackbinom{V(\bar{u}_{1})_{-}}{v_{1-}}
=\alpha \brackbinom{V(\bar{u}_{2})_{-}}{v_{2-}},\ \
\bar{\mu}\brackbinom{Vf_{1-}}{(\bar{u}_{1})_{-}}-\brackbinom{Vg_{1-}}{(\bar{v}_{1})_{-}}
=\bar{\alpha}\brackbinom{Vf_{2-}}{(\bar{u}_{2})_{-}}.
\end{align*}
Substituting the above conditions into  \eqref{1.2} and \eqref{2.1}, and applying \eqref{eq:2m}, we obtain
\begin{align*}
&H_{\bar{u}_{2}}T_{\bar{f}_{1}}+\tilde{T}_{\bar{v}_{2}}H_{\bar{u}_{1}}-H_{\bar{u}_{1}}T_{\bar{f}_{2}}-\tilde{T}_{\bar{v}_{1}}H_{\bar{u}_{2}}\\
=&H_{\bar{u}_{2}}T_{\bar{f}_{1}}+H_{\bar{v}_2\bar{u}_1}-H_{\bar{v}_2}T_{\bar{u}_1}-H_{\bar{u}_1\bar{f}_2}+\tilde{T}_{\bar{u}_1}H_{\bar{f}_2}-\tilde{T}_{\bar{v}_{1}}H_{\bar{u}_{2}}\\
=&H_{\bar{v}_2\bar{u}_1-\bar{u}_1\bar{f}_2}+H_{\bar{u}_{2}}T_{\bar{f}_{1}}-H_{\lambda \bar{u}_2}T_{\bar{u}_1}+\tilde{T}_{\bar{u}_1}H_{\bar{\mu}\bar{u}_2}-\tilde{T}_{\bar{v}_{1}}H_{\bar{u}_{2}}\\
=&H_{\bar{v}_2\bar{u}_1-\bar{u}_1\bar{f}_2}+H_{\bar{u}_{2}}T_{\bar{f}_{1}-\lambda \bar{u}_1}+\tilde{T}_{\bar{\mu}\bar{u}_1-\bar{v}_{1}}H_{\bar{u}_2}\\
=&H_{\bar{v}_2\bar{u}_1-\bar{u}_1\bar{f}_2}+H_{\bar{u}_{2}(\bar{f}_{1}-\lambda \bar{u}_1)}-\tilde{T}_{\bar{u}_{2}}H_{\bar{f}_{1}-\lambda \bar{u}_1}+H_{\bar{u}_2(\bar{\mu}\bar{u}_1-\bar{v}_{1})}-H_{\bar{\mu}\bar{u}_1-\bar{v}_{1}}T_{\bar{u}_2}\\
=&H_{\bar{v}_2\bar{u}_1-\bar{u}_1\bar{f}_2+\bar{u}_{2}(\bar{f}_{1}-\lambda \bar{u}_1)+\bar{u}_2(\bar{\mu}\bar{u}_1-\bar{v}_{1})}-\tilde{T}_{\bar{u}_{2}}H_{\bar{\alpha}\bar{u}_2}-H_{\bar{\alpha}\bar{u}_2}T_{\bar{u}_2}\\
=&H_{\bar{v}_2\bar{u}_1-\bar{u}_1\bar{f}_2+\bar{u}_{2}(\bar{f}_{1}-\lambda \bar{u}_1)+\bar{u}_2(\bar{\mu}\bar{u}_1-\bar{v}_{1})-\bar{\alpha}\bar{u}^{2}_2},
\end{align*}
and
\begin{align*}
&H_{g_{1}}T_{f_{2}}+\tilde{T}_{v_{1}}H_{g_{2}}-H_{g_{2}}T_{f_{1}}-\tilde{T}_{v_{2}}H_{g_{1}}\\
=&H_{g_{1}f_{2}}-\tilde{T}_{g_{1}}H_{f_{2}}+\tilde{T}_{v_{1}}H_{\bar{\lambda}f_{2}}-H_{\mu v_{2}}T_{f_{1}}-H_{v_{2}g_{1}}+H_{v_{2}}T_{g_{1}}\\
=&H_{g_{1}f_{2}-v_{2}g_{1}}+\tilde{T}_{\bar{\lambda}v_{1}-g_{1}}H_{f_{2}}+H_{v_{2}}T_{g_{1}-\mu f_{1}}\\
=&H_{g_{1}f_{2}-v_{2}g_{1}}+H_{(\bar{\lambda}v_{1}-g_{1})f_{2}}-H_{\bar{\lambda}v_{1}-g_{1}}T_{f_{2}}+H_{v_{2}(g_{1}-\mu f_{1})}-\tilde{T}_{v_{2}}H_{g_{1}-\mu f_{1}}\\
=&H_{g_{1}f_{2}-v_{2}g_{1}+(\bar{\lambda}v_{1}-g_{1})f_{2}+v_{2}(g_{1}-\mu f_{1})}+H_{\alpha v_2}T_{f_{2}}+\tilde{T}_{v_{2}}H_{\alpha f_2}\\
=&H_{g_{1}f_{2}-v_{2}g_{1}+(\bar{\lambda}v_{1}-g_{1})f_{2}+v_{2}(g_{1}-\mu f_{1})+\alpha f_2 v_{2}}\\
=&H_{\bar{\lambda}v_1 f_2-\mu f_1 v_2 +\alpha v_2 f_2}.
\end{align*}

In case \textbf{(L1)+(R2)}. Substituting $\brackbinom{V(\bar{f_{1}})_{-}}{g_{1-}}=\lambda \brackbinom{V(\bar{u}_{1})_{-}}{v_{1-}}$
and $\brackbinom{V(\bar{u}_{2})_{-}}{v_{2-}}=\mu \brackbinom{V(\bar{f_{2}})_{-}}{g_{2-}}$ into \eqref{WW}
shows that there exists a non-zero constant $\alpha$ such that
\begin{align*}
\brackbinom{V(\bar{u}_{1})_{-}}{v_{1-}}=\alpha \brackbinom{V(\bar{f_{2}})_{-}}{g_{2-}},\ \
\brackbinom{Vf_{1-}}{(\bar{u}_{1})_{-}}-\bar{\mu}\brackbinom{Vg_{1-}}{(\bar{v}_{1})_{-}}
=\bar{\alpha}\bigg(\bar{\lambda}\brackbinom{Vf_{2-}}{(\bar{u}_{2})_{-}}-\brackbinom{Vg_{2-}}{(\bar{v}_{2})_{-}}\bigg).
\end{align*}
Substituting the above conditions into  \eqref{1.2} and \eqref{2.1}, and applying \eqref{eq:2m}, we obtain
\begin{align*}
&H_{\bar{u}_{2}}T_{\bar{f}_{1}}+\tilde{T}_{\bar{v}_{2}}H_{\bar{u}_{1}}-H_{\bar{u}_{1}}T_{\bar{f}_{2}}-\tilde{T}_{\bar{v}_{1}}H_{\bar{u}_{2}}\\
=&H_{\bar{u}_{2}\bar{f}_{1}}-\tilde{T}_{\bar{u}_{2}}H_{\bar{f}_{1}}+\tilde{T}_{\bar{v}_{2}}H_{\bar{u}_{1}}
-H_{\bar{u}_{1}\bar{f}_{2}}+\tilde{T}_{\bar{u}_{1}}H_{\bar{f}_{2}}-\tilde{T}_{\bar{v}_{1}}H_{\bar{\mu}\bar{f}_2}\\
=&H_{\bar{u}_{2}\bar{f}_{1}-\bar{u}_{1}\bar{f}_{2}}-\tilde{T}_{\bar{u}_{2}}H_{\bar{\lambda}\bar{u}_{1}}+\tilde{T}_{\bar{v}_{2}}H_{\bar{u}_{1}}
+\tilde{T}_{\bar{u}_{1}}H_{\bar{f}_{2}}-\tilde{T}_{\bar{v}_{1}}H_{\bar{\mu}\bar{f}_2}\\
=&H_{\bar{u}_{2}\bar{f}_{1}-\bar{u}_{1}\bar{f}_{2}}+\tilde{T}_{\bar{v}_{2}-\bar{\lambda}\bar{u}_{2}}H_{\bar{u}_{1}}
+\tilde{T}_{\bar{u}_{1}-\bar{\mu}\bar{v}_{1}}H_{\bar{f}_{2}}\\
=&H_{\bar{u}_{2}\bar{f}_{1}-\bar{u}_{1}\bar{f}_{2}}+\tilde{T}_{\bar{v}_{2}-\bar{\lambda}\bar{u}_{2}}H_{\bar{\alpha}\bar{f}_2}
+\tilde{T}_{\bar{u}_{1}-\bar{\mu}\bar{v}_{1}}H_{\bar{f}_{2}}\\
=&H_{\bar{u}_{2}\bar{f}_{1}-\bar{u}_{1}\bar{f}_{2}}+\tilde{T}_{\bar{\alpha}(\bar{v}_{2}-\bar{\lambda}\bar{u}_{2})+\bar{u}_{1}
-\bar{\mu}\bar{v}_{1}}H_{\bar{f}_2}\\
=&H_{\bar{u}_{2}\bar{f}_{1}-\bar{u}_{1}\bar{f}_{2}+\bar{f}_2(\bar{\alpha}(\bar{v}_{2}-\bar{\lambda}\bar{u}_{2})+\bar{u}_{1}-\bar{\mu}\bar{v}_{1})}\\
=&H_{\bar{u}_{2}\bar{f}_{1}+\bar{\alpha}\bar{f}_2\bar{v}_{2}-\bar{\lambda}\bar{\alpha}\bar{f}_2\bar{u}_{2}-\bar{\mu}\bar{v}_{1}\bar{f}_2},
\end{align*}
and
\begin{align*}
&H_{g_{1}}T_{f_{2}}+\tilde{T}_{v_{1}}H_{g_{2}}-H_{g_{2}}T_{f_{1}}-\tilde{T}_{v_{2}}H_{g_{1}}\\
=&H_{\lambda v_{1}}T_{f_{2}}+H_{v_1 g_2}-H_{v_1}T_{g_2}-H_{g_{2}}T_{f_{1}}-H_{v_2 g_1}+H_{\mu g_2}T_{g_1}\\
=&H_{v_{1}g_{2}-v_2 g_1}+H_{v_{1}}T_{\lambda f_{2}-g_2}+H_{g_2}T_{\mu g_1 -f_1}\\
=&H_{v_{1}g_{2}-v_2 g_1}+H_{\alpha g_{2}}T_{\lambda f_{2}-g_2}+H_{g_2}T_{\mu g_1 -f_1}\\
=&H_{v_{1}g_{2}-v_2 g_1}+H_{g_{2}}T_{\alpha(\lambda f_{2}-g_2)+\mu g_1 -f_1}\\
=&H_{v_{1}g_{2}-v_2 g_1+g_{2}(\alpha(\lambda f_{2}-g_2)+\mu g_1 -f_1)}\\
=&H_{v_{1}g_{2}-v_2 g_1+\alpha\lambda g_2 f_2-\alpha g^{2}_2+\mu g_1 g_2-f_1g_2}.
\end{align*}

In case \textbf{(L2)+(R2)}. Substituting $\brackbinom{V(\bar{u}_{1})_{-}}{v_{1-}}=\lambda \brackbinom{V(\bar{f_{1}})_{-}}{g_{1-}}$
and $\brackbinom{V(\bar{u}_{2})_{-}}{v_{2-}}=\mu \brackbinom{V(\bar{f_{2}})_{-}}{g_{2-}}$ into \eqref{WW}
shows that there exists a non-zero constant $\alpha$ such that
\begin{align*}
\brackbinom{V(\bar{f_{1}})_{-}}{g_{1-}}=\alpha \brackbinom{V(\bar{f_{2}})_{-}}{g_{2-}},\ \
\brackbinom{Vf_{1-}}{(\bar{u}_{1})_{-}}-\bar{\mu}\brackbinom{Vg_{1-}}{(\bar{v}_{1})_{-}}
=\bar{\alpha}\bigg(\brackbinom{Vf_{2-}}{(\bar{u}_{2})_{-}}-\bar{\lambda}\brackbinom{Vg_{2-}}{(\bar{v}_{2})_{-}}\bigg).	
\end{align*}
Substituting the above conditions into  \eqref{1.2} and \eqref{2.1}, and applying \eqref{eq:2m}, we obtain
\begin{align*}
&H_{\bar{u}_{2}}T_{\bar{f}_{1}}+\tilde{T}_{\bar{v}_{2}}H_{\bar{u}_{1}}-H_{\bar{u}_{1}}T_{\bar{f}_{2}}-\tilde{T}_{\bar{v}_{1}}H_{\bar{u}_{2}}\\
=&H_{\bar{u}_{2}}T_{\bar{f}_{1}}+\tilde{T}_{\bar{v}_{2}}H_{\bar{\lambda}\bar{f}_{1}}
-H_{\bar{u}_{1}}T_{\bar{f}_{2}}-\tilde{T}_{\bar{v}_{1}}H_{\bar{\mu}\bar{f}_{2}}\\
=&H_{\bar{u}_{2}\bar{f}_{1}}-\tilde{T}_{\bar{u}_2}H_{\bar{f}_1}+\tilde{T}_{\bar{v}_{2}}H_{\bar{\lambda}\bar{f}_{1}}
-H_{\bar{u}_{1}\bar{f}_{2}}+\tilde{T}_{\bar{u}_1}H_{\bar{f}_2}-\tilde{T}_{\bar{v}_{1}}H_{\bar{\mu}\bar{f}_{2}}\\
=&H_{\bar{u}_{2}\bar{f}_{1}-\bar{u}_{1}\bar{f}_{2}}+\tilde{T}_{\bar{\lambda}\bar{v}_2-\bar{u}_2}H_{\bar{f}_1}
+\tilde{T}_{\bar{u}_1-\bar{\mu}\bar{v}_1}H_{\bar{f}_2}\\
=&H_{\bar{u}_{2}\bar{f}_{1}-\bar{u}_{1}\bar{f}_{2}}+\tilde{T}_{\bar{\lambda}\bar{v}_2-\bar{u}_2}H_{\bar{\alpha}\bar{f}_2}
+\tilde{T}_{\bar{u}_1-\bar{\mu}\bar{v}_1}H_{\bar{f}_2}\\
=&H_{\bar{u}_{2}\bar{f}_{1}-\bar{u}_{1}\bar{f}_{2}}+\tilde{T}_{\bar{\alpha}(\bar{\lambda}\bar{v}_2-\bar{u}_2)+\bar{u}_1-\bar{\mu}\bar{v}_1}H_{\bar{f}_2}\\
=&H_{\bar{u}_{2}\bar{f}_{1}-\bar{u}_{1}\bar{f}_{2}+(\bar{\alpha}(\bar{\lambda}\bar{v}_2-\bar{u}_2)+\bar{u}_1-\bar{\mu}\bar{v}_1)\bar{f}_2}\\
=&H_{\bar{u}_{2}\bar{f}_{1}+\bar{\alpha}\bar{\lambda}\bar{v}_{2}\bar{f}_{2}-\bar{\alpha}\bar{u}_2\bar{f}_2-\bar{\mu}\bar{v}_1\bar{f}_2},
\end{align*}
and
\begin{align*}
&H_{g_{1}}T_{f_{2}}+\tilde{T}_{v_{1}}H_{g_{2}}-H_{g_{2}}T_{f_{1}}-\tilde{T}_{v_{2}}H_{g_{1}}\\
=&H_{g_{1}}T_{f_{2}}+H_{v_1 g_2}-H_{v_1}T_{g_2}-H_{g_{2}}T_{f_{1}}-H_{v_2 g_1}+H_{v_2}T_{g_1}\\
=&H_{v_{1}g_{2}-v_2 g_1}+H_{g_{1}}T_{f_{2}}-H_{\lambda g_1}T_{g_2}-H_{g_{2}}T_{f_{1}}+H_{\mu g_2}T_{g_1}\\
=&H_{v_{1}g_{2}-v_2 g_1}+H_{g_1}T_{f_2 -\lambda g_2}+H_{g_2}T_{\mu g_1-f_1}\\
=&H_{v_{1}g_{2}-v_2 g_1}+H_{\alpha g_2}T_{f_2 -\lambda g_2}+H_{g_2}T_{\mu g_1-f_1}\\
=&H_{v_{1}g_{2}-v_2 g_1}+H_{ g_2}T_{\alpha(f_2 -\lambda g_2)+\mu g_1-f_1}\\
=&H_{v_{1}g_{2}-v_2 g_1+g_2(\alpha(f_2 -\lambda g_2)+\mu g_1-f_1)}\\
=&H_{v_{1}g_{2}-v_2 g_1+\alpha f_2g_2-\lambda \alpha g^{2}_2+\mu g_1g_2-f_1g_2}.
\end{align*}

In case \textbf{(L3)+(R2)}. Substituting $\brackbinom{Vf_{2-}}{(\bar{u}_{2})_{-}}=\lambda \brackbinom{Vg_{2-}}{(\bar{v}_{2})_{-}}$
and $\brackbinom{V(\bar{u}_{2})_{-}}{v_{2-}}=\mu \brackbinom{V(\bar{f_{2}})_{-}}{g_{2-}}$ into \eqref{WW}
shows that there exists a non-zero constant $\alpha$ such that\\
\begin{align*}
\bar{\lambda}\brackbinom{V(\bar{f_{1}})_{-}}{g_{1-}}-\brackbinom{V(\bar{u}_{1})_{-}}{v_{1-}}
=\alpha \brackbinom{V(\bar{f_{2}})_{-}}{g_{2-}},\ \
\brackbinom{Vf_{1-}}{(\bar{u}_{1})_{-}}-\bar{\mu}\brackbinom{Vg_{1-}}{(\bar{v}_{1})_{-}}
=\bar{\alpha}\brackbinom{Vg_{2-}}{(\bar{v}_{2})_{-}}.
\end{align*}
Substituting the above conditions into  \eqref{1.2} and \eqref{2.1}, and applying \eqref{eq:2m}, we obtain
\begin{align*}
&H_{\bar{u}_{2}}T_{\bar{f}_{1}}+\tilde{T}_{\bar{v}_{2}}H_{\bar{u}_{1}}-H_{\bar{u}_{1}}T_{\bar{f}_{2}}-\tilde{T}_{\bar{v}_{1}}H_{\bar{u}_{2}}\\
=&H_{\lambda \bar{v}_{2}}T_{\bar{f}_{1}}+H_{\bar{v}_2\bar{u}_1}-H_{\bar{v}_2}T_{\bar{u}_1}-H_{\bar{u}_1\bar{f}_2}+\tilde{T}_{\bar{u}_1}H_{\bar{f}_2}-\tilde{T}_{\bar{v}_{1}}H_{\bar{\mu}\bar{f}_{2}}\\
=&H_{\bar{v}_2\bar{u}_1-\bar{u}_1\bar{f}_2}+H_{\bar{v}_{2}}T_{\lambda\bar{f}_{1}-\bar{u}_1}+\tilde{T}_{\bar{u}_1-\bar{\mu}\bar{v}_1}H_{\bar{f}_2}\\
=&H_{\bar{v}_2\bar{u}_1-\bar{u}_1\bar{f}_2}+H_{\bar{v}_{2}(\lambda\bar{f}_{1}-\bar{u}_1)}-\tilde{T}_{\bar{v}_{2}}H_{\lambda\bar{f}_{1}-\bar{u}_1}+H_{\bar{f}_2(\bar{u}_1-\bar{\mu}\bar{v}_1)}-H_{\bar{u}_1-\bar{\mu}\bar{v}_1}T_{\bar{f}_2}\\
=&H_{\bar{v}_2\bar{u}_1-\bar{u}_1\bar{f}_2+\bar{v}_{2}(\lambda\bar{f}_{1}-\bar{u}_1)+\bar{f}_2(\bar{u}_1-\bar{\mu}\bar{v}_1)}-\tilde{T}_{\bar{v}_{2}}H_{\bar{\alpha}\bar{f}_{2}}-H_{\bar{\alpha}\bar{v}_{2}}T_{\bar{f}_{2}}\\
=&H_{\bar{v}_2\bar{u}_1-\bar{u}_1\bar{f}_2+\bar{v}_{2}(\lambda\bar{f}_{1}-\bar{u}_1)+\bar{f}_2(\bar{u}_1-\bar{\mu}\bar{v}_1)-\bar{\alpha}\bar{f}_{2}\bar{v}_2}\\
=&H_{\lambda \bar{v}_2\bar{f}_1-\bar{\mu}\bar{v}_1\bar{f}_2-\bar{\alpha}\bar{v}_2\bar{f}_2},
\end{align*}
and
\begin{align*}
&H_{g_{1}}T_{f_{2}}+\tilde{T}_{v_{1}}H_{g_{2}}-H_{g_{2}}T_{f_{1}}-\tilde{T}_{v_{2}}H_{g_{1}}\\
=&H_{g_{1}f_{2}}-\tilde{T}_{g_{1}}H_{f_{2}}+\tilde{T}_{v_{1}}H_{g_{2}}-H_{g_{2}}T_{f_{1}}-H_{v_{2}g_{1}}+H_{v_{2}}T_{g_{1}}\\
=&H_{g_{1}f_{2}}-\tilde{T}_{g_{1}}H_{\bar{\lambda}g_{2}}+\tilde{T}_{v_{1}}H_{g_{2}}-H_{g_{2}}T_{f_{1}}-H_{v_{2}g_{1}}+H_{\mu g_{2}}T_{g_{1}}\\
=&H_{g_{1}f_{2}-v_{2}g_{1}}+\tilde{T}_{v_{1}-\bar{\lambda}g_{1}}H_{g_{2}}+H_{g_{2}}T_{\mu g_{1}- f_{1}}\\
=&H_{g_{1}f_{2}-v_{2}g_{1}}+H_{(v_{1}-\bar{\lambda}g_{1})g_{2}}-H_{v_{1}-\bar{\lambda}g_{1}}T_{g_{2}}+H_{g_{2}(\mu g_{1}- f_{1})}-\tilde{T}_{g_{2}}H_{\mu g_{1}-f_{1}}\\
=&H_{g_{1}f_{2}-v_{2}g_{1}+(v_{1}-\bar{\lambda}g_{1})g_{2}+g_{2}(\mu g_{1}- f_{1})}+H_{\alpha g_2}T_{g_{2}}+\tilde{T}_{g_{2}}H_{\alpha g_2}\\
=&H_{g_{1}f_{2}-v_{2}g_{1}+(v_{1}-\bar{\lambda}g_{1})g_{2}+g_{2}(\mu g_{1}- f_{1})+\alpha g^{2}_2}.
\end{align*}

In case \textbf{(L4)+(R2)}. Substituting $\brackbinom{Vg_{2-}}{(\bar{v}_{2})_{-}}=\lambda \brackbinom{Vf_{2-}}{(\bar{u}_{2})_{-}}$
and $\brackbinom{V(\bar{u}_{2})_{-}}{v_{2-}}=\mu \brackbinom{V(\bar{f_{2}})_{-}}{g_{2-}}$ into \eqref{WW}
shows that there exists a non-zero constant $\alpha$ such that
\begin{align*}
\brackbinom{V(\bar{f_{1}})_{-}}{g_{1-}}-\bar{\lambda}\brackbinom{V(\bar{u}_{1})_{-}}{v_{1-}}
=\alpha \brackbinom{V(\bar{f_{2}})_{-}}{g_{2-}},\ \
\brackbinom{Vf_{1-}}{(\bar{u}_{1})_{-}}-\bar{\mu}\brackbinom{Vg_{1-}}{(\bar{v}_{1})_{-}}
=\bar{\alpha}\brackbinom{Vf_{2-}}{(\bar{u}_{2})_{-}}.
\end{align*}
Substituting the above conditions into  \eqref{1.2} and \eqref{2.1}, and applying \eqref{eq:2m}, we obtain
\begin{align*}
&H_{\bar{u}_{2}}T_{\bar{f}_{1}}+\tilde{T}_{\bar{v}_{2}}H_{\bar{u}_{1}}-H_{\bar{u}_{1}}T_{\bar{f}_{2}}-\tilde{T}_{\bar{v}_{1}}H_{\bar{u}_{2}}\\
=&H_{\bar{u}_{2}}T_{\bar{f}_{1}}+H_{\bar{v}_2\bar{u}_1}-H_{\bar{v}_2}T_{\bar{u}_1}-H_{\bar{u}_1\bar{f}_2}+\tilde{T}_{\bar{u}_1}H_{\bar{f}_2}-\tilde{T}_{\bar{v}_{1}}H_{\bar{\mu}\bar{f}_{2}}\\
=&H_{\bar{u}_{2}}T_{\bar{f}_{1}}+H_{\bar{v}_2\bar{u}_1}-H_{\lambda \bar{u}_2}T_{\bar{u}_1}-H_{\bar{u}_1\bar{f}_2}+\tilde{T}_{\bar{u}_1}H_{\bar{f}_2}-\tilde{T}_{\bar{v}_{1}}H_{\bar{\mu}\bar{f}_{2}}\\
=&H_{\bar{v}_2\bar{u}_1-\bar{u}_1\bar{f}_2}+H_{\bar{u}_{2}}T_{\bar{f}_{1}-\lambda\bar{u}_1}+\tilde{T}_{\bar{u}_1-\bar{\mu}\bar{v}_1}H_{\bar{f}_2}\\
=&H_{\bar{v}_2\bar{u}_1-\bar{u}_1\bar{f}_2}+H_{\bar{u}_{2}(\bar{f}_{1}-\lambda\bar{u}_1)}-\tilde{T}_{\bar{u}_{2}}H_{\bar{f}_{1}-\lambda\bar{u}_1}+H_{\bar{f}_2(\bar{u}_1-\bar{\mu}\bar{v}_1)}-H_{\bar{u}_1-\bar{\mu}\bar{v}_1}T_{\bar{f}_2}\\
=&H_{\bar{v}_2\bar{u}_1-\bar{u}_1\bar{f}_2+\bar{u}_{2}(\bar{f}_{1}-\lambda\bar{u}_1)+\bar{f}_2(\bar{u}_1-\bar{\mu}\bar{v}_1)}-\tilde{T}_{\bar{u}_{2}}H_{\bar{\alpha}\bar{f}_{2}}-H_{\bar{\alpha}\bar{u}_{2}}T_{\bar{f}_{2}}\\
=&H_{\bar{v}_2\bar{u}_1-\bar{u}_1\bar{f}_2+\bar{u}_{2}(\bar{f}_{1}-\lambda\bar{u}_1)+\bar{f}_2(\bar{u}_1-\bar{\mu}\bar{v}_1)-\bar{\alpha}\bar{f}_{2}\bar{u}_2}\\
=&H_{\bar{v}_2\bar{u}_1+\bar{f}_{1}\bar{u}_{2}-\lambda\bar{u}_1\bar{u}_{2}-\bar{\mu}\bar{v}_1\bar{f}_2-\bar{\alpha}\bar{f}_{2}\bar{u}_2},
\end{align*}
and
\begin{align*}
&H_{g_{1}}T_{f_{2}}+\tilde{T}_{v_{1}}H_{g_{2}}-H_{g_{2}}T_{f_{1}}-\tilde{T}_{v_{2}}H_{g_{1}}\\
=&H_{g_{1}f_{2}}-\tilde{T}_{g_{1}}H_{f_{2}}+\tilde{T}_{v_{1}}H_{g_{2}}-H_{g_{2}}T_{f_{1}}-H_{v_{2}g_{1}}+H_{v_{2}}T_{g_{1}}\\
=&H_{g_{1}f_{2}}-\tilde{T}_{g_{1}}H_{f_{2}}+\tilde{T}_{v_{1}}H_{\bar{\lambda}f_{2}}-H_{g_{2}}T_{f_{1}}-H_{v_{2}g_{1}}+H_{\mu g_{2}}T_{g_{1}}\\
=&H_{g_{1}f_{2}-v_{2}g_{1}}+\tilde{T}_{\bar{\lambda}v_{1}-g_{1}}H_{f_{2}}+H_{g_{2}}T_{\mu g_{1}- f_{1}}\\
=&H_{g_{1}f_{2}-v_{2}g_{1}}+H_{(\bar{\lambda}v_{1}-g_{1})f_{2}}-H_{\bar{\lambda}v_{1}-g_{1}}T_{f_{2}}+H_{g_{2}(\mu g_{1}- f_{1})}-\tilde{T}_{g_{2}}H_{\mu g_{1}-f_{1}}\\
=&H_{g_{1}f_{2}-v_{2}g_{1}+(\bar{\lambda}v_{1}-g_{1})f_{2}+g_{2}(\mu g_{1}- f_{1})}+H_{\alpha g_2}T_{f_{2}}+\tilde{T}_{g_{2}}H_{\alpha f_2}\\
=&H_{g_{1}f_{2}-v_{2}g_{1}+(\bar{\lambda}v_{1}-g_{1})f_{2}+g_{2}(\mu g_{1}- f_{1})+\alpha f_2 g_2}\\
=&H_{-v_{2}g_{1}+\bar{\lambda}v_{1}f_{2}+g_{2}(\mu g_{1}- f_{1})+\alpha f_2 g_2}.
\end{align*}

In case \textbf{(L1)+(R3)}. Substituting $\brackbinom{V(\bar{f_{1}})_{-}}{g_{1-}}=\lambda \brackbinom{V(\bar{u}_{1})_{-}}{v_{1-}}$
and $\brackbinom{Vf_{1-}}{(\bar{u}_{1})_{-}}=\mu \brackbinom{Vg_{1-}}{(\bar{v}_{1})_{-}}$ into \eqref{WW}
shows that there exists a non-zero constant $\alpha$ such that
\begin{align*}
\brackbinom{V(\bar{u}_{1})_{-}}{v_{1-}}=\alpha \bigg(\bar{\mu}\brackbinom{V(\bar{f_{2}})_{-}}{g_{2-}}-\brackbinom{V(\bar{u}_{2})_{-}}{v_{2-}}\bigg),\ \
\brackbinom{Vg_{1-}}{(\bar{v}_{1})_{-}}=\bar{\alpha}\bigg(\bar{\lambda}\brackbinom{Vf_{2-}}{(\bar{u}_{2})_{-}}-\brackbinom{Vg_{2-}}{(\bar{v}_{2})_{-}}\bigg).
\end{align*}
Substituting the above conditions into  \eqref{1.2} and \eqref{2.1}, and applying \eqref{eq:2m}, we obtain
\begin{align*}
&H_{\bar{u}_{2}}T_{\bar{f}_{1}}+\tilde{T}_{\bar{v}_{2}}H_{\bar{u}_{1}}-H_{\bar{u}_{1}}T_{\bar{f}_{2}}-\tilde{T}_{\bar{v}_{1}}H_{\bar{u}_{2}}\\
=&H_{\bar{u}_{2}\bar{f}_{1}}-\tilde{T}_{\bar{u}_{2}}H_{\bar{f}_{1}}+\tilde{T}_{\bar{v}_{2}}H_{\bar{u}_{1}}-H_{\bar{u}_{1}}T_{\bar{f}_{2}}
-H_{\bar{v}_{1}\bar{u}_{2}}+H_{\bar{v}_{1}}T_{\bar{u}_{2}}\\
=&H_{\bar{u}_{2}\bar{f}_{1}}-\tilde{T}_{\bar{u}_{2}}H_{\bar{\lambda}\bar{u}_{1}}+\tilde{T}_{\bar{v}_{2}}H_{\bar{u}_{1}}-H_{\mu \bar{v}_{1}}T_{\bar{f}_{2}}
-H_{\bar{v}_{1}\bar{u}_{2}}+H_{\bar{v}_{1}}T_{\bar{u}_{2}}\\
=&H_{\bar{u}_{2}\bar{f}_{1}-\bar{v}_{1}\bar{u}_{2}}+\tilde{T}_{\bar{v}_2-\bar{\lambda}\bar{u}_{2}}H_{\bar{u}_{1}}+H_{\bar{v}_{1}}T_{\bar{u}_2-\mu \bar{f}_{2}}\\
=&H_{\bar{u}_{2}\bar{f}_{1}-\bar{v}_{1}\bar{u}_{2}}+\tilde{T}_{\bar{v}_2-\bar{\lambda}\bar{u}_{2}}H_{\bar{\alpha}(\mu \bar{f}_2-\bar{u}_2)}+H_{\bar{\alpha}(\bar{\lambda}\bar{u}_2-\bar{v}_2)}T_{\bar{u}_2-\mu \bar{f}_{2}}\\
=&H_{\bar{u}_{2}\bar{f}_{1}-\bar{v}_{1}\bar{u}_{2}+\bar{\alpha}(\mu \bar{f}_2-\bar{u}_2)(\bar{v}_2-\bar{\lambda}\bar{u}_{2})},
\end{align*}
and
\begin{align*}
&H_{g_{1}}T_{f_{2}}+\tilde{T}_{v_{1}}H_{g_{2}}-H_{g_{2}}T_{f_{1}}-\tilde{T}_{v_{2}}H_{g_{1}}\\
=&H_{\lambda v_{1}}T_{f_{2}}+H_{v_1 g_2}-H_{v_{1}}T_{g_{2}}-H_{g_{2}f_{1}}+\tilde{T}_{g_{2}}H_{f_{1}}-\tilde{T}_{v_{2}}H_{g_{1}}\\
=&H_{\lambda v_{1}}T_{f_{2}}+H_{v_1 g_2}-H_{v_{1}}T_{g_{2}}-H_{g_{2}f_{1}}+\tilde{T}_{g_{2}}H_{\bar{\mu}g_{1}}-\tilde{T}_{v_{2}}H_{g_{1}}\\
=&H_{v_1 g_2-g_{2}f_{1}}+H_{v_1}T_{\lambda f_2- g_2}+\tilde{T}_{\bar{\mu} g_2-v_{2}}H_{g_{1}}\\
=&H_{v_1 g_2-g_{2}f_{1}}+H_{\alpha(\bar{\mu} g_2-v_{2})}T_{\lambda f_2- g_2}+\tilde{T}_{\bar{\mu} g_2-v_{2}}H_{\alpha(\lambda f_2- g_2)}\\
=&H_{v_1 g_2-g_{2}f_{1}+\alpha(\bar{\mu} g_2-v_{2})(\lambda f_2- g_2)}.
\end{align*}

In case \textbf{(L2)+(R3)}. Substituting $\brackbinom{V(\bar{u}_{1})_{-}}{v_{1-}}=\lambda \brackbinom{V(\bar{f_{1}})_{-}}{g_{1-}}$
and $\brackbinom{Vf_{1-}}{(\bar{u}_{1})_{-}}=\mu \brackbinom{Vg_{1-}}{(\bar{v}_{1})_{-}}$ into \eqref{WW}
shows that there exists a non-zero constant $\alpha$ such that
\begin{align*}
\brackbinom{V(\bar{f_{1}})_{-}}{g_{1-}}=\alpha \bigg(\bar{\mu}\brackbinom{V(\bar{f_{2}})_{-}}{g_{2-}}-\brackbinom{V(\bar{u}_{2})_{-}}{v_{2-}}\bigg),\ \
\brackbinom{Vg_{1-}}{(\bar{v}_{1})_{-}}=\bar{\alpha}\bigg(\brackbinom{Vf_{2-}}{(\bar{u}_{2})_{-}}-\bar{\lambda}\brackbinom{Vg_{2-}}{(\bar{v}_{2})_{-}}\bigg).
\end{align*}
Substituting the above conditions into  \eqref{1.2} and \eqref{2.1}, and applying \eqref{eq:2m}, we obtain
\begin{align*}
&H_{\bar{u}_{2}}T_{\bar{f}_{1}}+\tilde{T}_{\bar{v}_{2}}H_{\bar{u}_{1}}-H_{\bar{u}_{1}}T_{\bar{f}_{2}}-\tilde{T}_{\bar{v}_{1}}H_{\bar{u}_{2}}\\
=&H_{\bar{u}_{2}\bar{f}_{1}}-\tilde{T}_{\bar{u}_{2}}H_{\bar{f}_{1}}+\tilde{T}_{\bar{v}_{2}}H_{\bar{u}_{1}}-H_{\bar{u}_{1}}T_{\bar{f}_{2}}
-H_{\bar{v}_{1}\bar{u}_{2}}+H_{\bar{v}_{1}}T_{\bar{u}_{2}}\\
=&H_{\bar{u}_{2}\bar{f}_{1}}-\tilde{T}_{\bar{u}_{2}}H_{\bar{f}_{1}}+\tilde{T}_{\bar{v}_{2}}H_{\bar{\lambda}\bar{f}_{1}}-H_{\mu \bar{v}_{1}}T_{\bar{f}_{2}}
-H_{\bar{v}_{1}\bar{u}_{2}}+H_{\bar{v}_{1}}T_{\bar{u}_{2}}\\
=&H_{\bar{u}_{2}\bar{f}_{1}-\bar{v}_{1}\bar{u}_{2}}+\tilde{T}_{\bar{\lambda}\bar{v}_2-\bar{u}_{2}}H_{\bar{f}_{1}}+H_{\bar{v}_{1}}T_{\bar{u}_2-\mu \bar{f}_{2}}\\
=&H_{\bar{u}_{2}\bar{f}_{1}-\bar{v}_{1}\bar{u}_{2}}+\tilde{T}_{\bar{\lambda}\bar{v}_2-\bar{u}_{2}}H_{\bar{\alpha}(\mu \bar{f}_2-\bar{u}_2)}+H_{\bar{\alpha}(\bar{u}_2-\bar{\lambda}\bar{v}_2)}T_{\bar{u}_2-\mu \bar{f}_{2}}\\
=&H_{\bar{u}_{2}\bar{f}_{1}-\bar{v}_{1}\bar{u}_{2}+\bar{\alpha}(\mu \bar{f}_2-\bar{u}_2)(\bar{\lambda}\bar{v}_2-\bar{u}_{2})},
\end{align*}
and
\begin{align*}
&H_{g_{1}}T_{f_{2}}+\tilde{T}_{v_{1}}H_{g_{2}}-H_{g_{2}}T_{f_{1}}-\tilde{T}_{v_{2}}H_{g_{1}}\\
=&H_{g_{1}}T_{f_{2}}+H_{v_1 g_2}-H_{v_{1}}T_{g_{2}}-H_{g_{2}f_{1}}+\tilde{T}_{g_{2}}H_{f_{1}}-\tilde{T}_{v_{2}}H_{g_{1}}\\
=&H_{g_{1}}T_{f_{2}}+H_{v_1 g_2}-H_{\lambda g_{1}}T_{g_{2}}-H_{g_{2}f_{1}}+\tilde{T}_{g_{2}}H_{\bar{\mu}g_{1}}-\tilde{T}_{v_{2}}H_{g_{1}}\\
=&H_{v_1 g_2-g_{2}f_{1}}+H_{g_1}T_{f_2-\lambda g_2}+\tilde{T}_{\bar{\mu}g_2-v_2}H_{g_1}\\
=&H_{v_1 g_2-g_{2}f_{1}}+H_{\alpha (\bar{\mu}g_2-v_2)}T_{f_2-\lambda g_2}+\tilde{T}_{\bar{\mu}g_2-v_2}H_{\alpha (f_2-\lambda g_2)}\\
=&H_{v_1 g_2-g_{2}f_{1}+\alpha (\bar{\mu}g_2-v_2)(f_2-\lambda g_2)}.
\end{align*}

In case \textbf{(L3)+(R3)}. Substituting $\brackbinom{Vf_{2-}}{(\bar{u}_{2})_{-}}=\lambda \brackbinom{Vg_{2-}}{(\bar{v}_{2})_{-}}$
and $\brackbinom{Vf_{1-}}{(\bar{u}_{1})_{-}}=\mu \brackbinom{Vg_{1-}}{(\bar{v}_{1})_{-}}$ into \eqref{WW}
shows that there exists a non-zero constant $\alpha$ such that
\begin{align*}
\bar{\lambda}\brackbinom{V(\bar{f_{1}})_{-}}{g_{1-}}-\brackbinom{V(\bar{u}_{1})_{-}}{v_{1-}}
=\alpha \bigg(\bar{\mu}\brackbinom{V(\bar{f_{2}})_{-}}{g_{2-}}-\brackbinom{V(\bar{u}_{2})_{-}}{v_{2-}}\bigg),\ \
\brackbinom{Vg_{1-}}{(\bar{v}_{1})_{-}}=\bar{\alpha}\brackbinom{Vg_{2-}}{(\bar{v}_{2})_{-}}.
\end{align*}
Substituting the above conditions into  \eqref{1.2} and \eqref{2.1}, and applying \eqref{eq:2m}, we obtain
\begin{align*}
&H_{\bar{u}_{2}}T_{\bar{f}_{1}}+\tilde{T}_{\bar{v}_{2}}H_{\bar{u}_{1}}-H_{\bar{u}_{1}}T_{\bar{f}_{2}}-\tilde{T}_{\bar{v}_{1}}H_{\bar{u}_{2}}\\
=&H_{\lambda \bar{v}_{2}}T_{\bar{f}_{1}}+H_{\bar{v}_{2}\bar{u}_{1}}-H_{\bar{v}_{2}}T_{\bar{u}_{1}}-H_{\mu \bar{v}_{1}}T_{\bar{f}_{2}}-H_{\bar{v}_1 \bar{u}_2}+H_{\bar{v}_{1}}T_{\bar{u}_{2}}\\
=&H_{\bar{v}_{2}\bar{u}_{1}-\bar{v}_1 \bar{u}_2}+H_{\bar{v}_2}T_{\lambda \bar{f}_{1}-\bar{u}_1}+H_{\bar{v}_{1}}T_{\bar{u}_2-\mu \bar{f}_{2}}\\
=&H_{\bar{v}_{2}\bar{u}_{1}-\bar{v}_1 \bar{u}_2}+H_{\bar{v}_2(\lambda \bar{f}_{1}-\bar{u}_1)}-\tilde{T}_{\bar{v}_2}H_{\lambda \bar{f}_{1}-\bar{u}_1}+H_{\bar{v}_{1}}T_{\bar{u}_2-\mu \bar{f}_{2}}\\
=&H_{\bar{v}_{2}\bar{u}_{1}-\bar{v}_1 \bar{u}_2}+H_{\bar{v}_2(\lambda \bar{f}_{1}-\bar{u}_1)}+\tilde{T}_{\bar{v}_2}H_{\bar{\alpha}(\bar{u}_2-\mu \bar{f}_{2})}+H_{\bar{\alpha}\bar{v}_{2}}T_{\bar{u}_2-\mu \bar{f}_{2}}\\
=&H_{\bar{v}_{2}\bar{u}_{1}-\bar{v}_1 \bar{u}_2+\bar{v}_2(\lambda \bar{f}_{1}-\bar{u}_1)+\bar{\alpha}\bar{v}_{2}(\bar{u}_2-\mu \bar{f}_{2})}\\
=&H_{-\bar{v}_1 \bar{u}_2+\lambda\bar{v}_2\bar{f}_{1}+\bar{\alpha}\bar{v}_{2}(\bar{u}_2-\mu \bar{f}_{2})},
\end{align*}
and
\begin{align*}
&H_{g_{1}}T_{f_{2}}+\tilde{T}_{v_{1}}H_{g_{2}}-H_{g_{2}}T_{f_{1}}-\tilde{T}_{v_{2}}H_{g_{1}}\\
=&H_{g_{1}f_{2}}-\tilde{T}_{g_{1}}H_{f_{2}}+\tilde{T}_{v_{1}}H_{g_{2}}-H_{g_{2}f_{1}}+\tilde{T}_{g_{2}}H_{f_{1}}-\tilde{T}_{v_{2}}H_{g_{1}}\\
=&H_{g_{1}f_{2}-g_{2}f_{1}}-\tilde{T}_{g_{1}}H_{\bar{\lambda}g_{2}}+\tilde{T}_{v_{1}}H_{g_{2}}+\tilde{T}_{g_{2}}H_{\bar{\mu}g_{1}}-\tilde{T}_{v_{2}}H_{g_{1}}\\
=&H_{g_{1}f_{2}-g_{2}f_{1}}+\tilde{T}_{v_{1}-\bar{\lambda}g_{1}}H_{g_{2}}+\tilde{T}_{\bar{\mu}g_{2}-v_2}H_{g_{1}}\\
=&H_{g_{1}f_{2}-g_{2}f_{1}}+H_{(v_{1}-\bar{\lambda}g_{1})g_{2}}-H_{v_{1}-\bar{\lambda}g_{1}}T_{g_{2}}+\tilde{T}_{\bar{\mu}g_{2}-v_2}H_{\alpha g_{2}}\\
=&H_{g_{1}f_{2}-g_{2}f_{1}}+H_{(v_{1}-\bar{\lambda}g_{1})g_{2}}+H_{\alpha(\bar{\mu}g_{2}-v_2)}T_{g_{2}}+\tilde{T}_{\bar{\mu}g_{2}-v_2}H_{\alpha g_{2}}\\
=&H_{g_{1}f_{2}-g_{2}f_{1}+(v_{1}-\bar{\lambda}g_{1})g_{2}+\alpha g_{2}(\bar{\mu}g_{2}-v_2)}.
\end{align*}

In case \textbf{(L4)+(R3)}. Substituting $\brackbinom{Vg_{2-}}{(\bar{v}_{2})_{-}}=\lambda \brackbinom{Vf_{2-}}{(\bar{u}_{2})_{-}}$
and $\brackbinom{Vf_{1-}}{(\bar{u}_{1})_{-}}=\mu \brackbinom{Vg_{1-}}{(\bar{v}_{1})_{-}}$ into \eqref{WW}
shows that there exists a non-zero constant $\alpha$ such that
\begin{align*}
\brackbinom{V(\bar{f_{1}})_{-}}{g_{1-}}-\bar{\lambda}\brackbinom{V(\bar{u}_{1})_{-}}{v_{1-}}
=\alpha \bigg(\bar{\mu}\brackbinom{V(\bar{f_{2}})_{-}}{g_{2-}}-\brackbinom{V(\bar{u}_{2})_{-}}{v_{2-}}\bigg),\ \
\brackbinom{Vg_{1-}}{(\bar{v}_{1})_{-}}=\bar{\alpha}\brackbinom{Vf_{2-}}{(\bar{u}_{2})_{-}}.
\end{align*}
Substituting the above conditions into  \eqref{1.2} and \eqref{2.1}, and applying \eqref{eq:2m}, we obtain
\begin{align*}
&H_{\bar{u}_{2}}T_{\bar{f}_{1}}+\tilde{T}_{\bar{v}_{2}}H_{\bar{u}_{1}}-H_{\bar{u}_{1}}T_{\bar{f}_{2}}-\tilde{T}_{\bar{v}_{1}}H_{\bar{u}_{2}}\\
=&H_{\bar{u}_{2}}T_{\bar{f}_{1}}+H_{\bar{v}_{2}\bar{u}_{1}}-H_{\bar{v}_{2}}T_{\bar{u}_{1}}-H_{\bar{u}_{1}}T_{\bar{f}_{2}}-H_{\bar{v}_{1}\bar{u}_{2}}+H_{\bar{v}_{1}}T_{\bar{u}_{2}}\\
=&H_{\bar{u}_{2}}T_{\bar{f}_{1}}+H_{\bar{v}_{2}\bar{u}_{1}}-H_{\lambda\bar{u}_{2}}T_{\bar{u}_{1}}-H_{\mu\bar{v}_{1}}T_{\bar{f}_{2}}-H_{\bar{v}_{1}\bar{u}_{2}}+H_{\bar{v}_{1}}T_{\bar{u}_{2}}\\
=&H_{\bar{v}_{2}\bar{u}_{1}-\bar{v}_{1}\bar{u}_{2}}+H_{\bar{u}_{2}}T_{\bar{f}_{1}-\lambda \bar{u}_1}+H_{\bar{v}_1}T_{\bar{u}_2-\mu \bar{f}_2}\\
=&H_{\bar{v}_{2}\bar{u}_{1}-\bar{v}_{1}\bar{u}_{2}}+H_{\bar{u}_{2}(\bar{f}_{1}-\lambda \bar{u}_1)}-\tilde{T}_{\bar{u}_{2}}H_{\bar{f}_{1}-\lambda \bar{u}_1}+H_{\bar{v}_1}T_{\bar{u}_2-\mu \bar{f}_2}\\
=&H_{\bar{v}_{2}\bar{u}_{1}-\bar{v}_{1}\bar{u}_{2}}+H_{\bar{u}_{2}(\bar{f}_{1}-\lambda \bar{u}_1)}+\tilde{T}_{\bar{u}_{2}}H_{\bar{\alpha} (\bar{u}_2-\mu \bar{f}_2)}+H_{\bar{\alpha}\bar{u}_2}T_{\bar{u}_2-\mu \bar{f}_2}\\
=&H_{\bar{v}_{2}\bar{u}_{1}-\bar{v}_{1}\bar{u}_{2}+\bar{u}_{2}(\bar{f}_{1}-\lambda \bar{u}_1)+\bar{\alpha} \bar{u}_{2}(\bar{u}_2-\mu \bar{f}_2)},
\end{align*}
and
\begin{align*}
&H_{g_{1}}T_{f_{2}}+\tilde{T}_{v_{1}}H_{g_{2}}-H_{g_{2}}T_{f_{1}}-\tilde{T}_{v_{2}}H_{g_{1}}\\
=&H_{g_{1}f_{2}}-\tilde{T}_{g_{1}}H_{f_{2}}+\tilde{T}_{v_{1}}H_{\bar{\lambda} f_{2}}-H_{g_{2}f_{1}}+\tilde{T}_{g_{2}}H_{f_{1}}-\tilde{T}_{v_{2}}H_{g_{1}}\\
=&H_{g_{1}f_{2}}-\tilde{T}_{g_{1}}H_{f_{2}}+\tilde{T}_{v_{1}}H_{\bar{\lambda} f_{2}}-H_{g_{2}f_{1}}+\tilde{T}_{g_{2}}H_{\bar{\mu}g_{1}}-\tilde{T}_{v_{2}}H_{g_{1}}\\
=&H_{g_{1}f_{2}-g_{2}f_{1}}+\tilde{T}_{\bar{\lambda}v_1- g_{1}}H_{f_{2}}+\tilde{T}_{\bar{\mu}g_{2}-v_2}H_{g_{1}}\\
=&H_{g_{1}f_{2}-g_{2}f_{1}}+H_{(\bar{\lambda}v_1- g_{1})f_2}-H_{\bar{\lambda}v_1- g_{1}}T_{f_{2}}+\tilde{T}_{\bar{\mu}g_{2}-v_2}H_{g_{1}}\\
=&H_{g_{1}f_{2}-g_{2}f_{1}}+H_{(\bar{\lambda}v_1- g_{1})f_2}+H_{\alpha(\bar{\mu}g_{2}-v_2)}T_{f_{2}}+\tilde{T}_{\bar{\mu}g_{2}-v_2}H_{\alpha f_{2}}\\
=&H_{g_{1}f_{2}-g_{2}f_{1}+(\bar{\lambda}v_1- g_{1})f_2+\alpha f_{2}(\bar{\mu}g_{2}-v_2)}\\
=&H_{-g_{2}f_{1}+\bar{\lambda}v_1f_2+\alpha f_{2}(\bar{\mu}g_{2}-v_2)}.
\end{align*}

In case \textbf{(L1)+(R4)}. Substituting $\brackbinom{V(\bar{f_{1}})_{-}}{g_{1-}}=\lambda \brackbinom{V(\bar{u}_{1})_{-}}{v_{1-}}$
and $\brackbinom{Vg_{1-}}{(\bar{v}_{1})_{-}}=\mu \brackbinom{Vf_{1-}}{(\bar{u}_{1})_{-}}$ into \eqref{WW}
shows that there exists a non-zero constant $\alpha$ such that
\begin{align*}
\brackbinom{V(\bar{u}_{1})_{-}}{v_{1-}}
=\alpha \bigg(\brackbinom{V(\bar{f_{2}})_{-}}{g_{2-}}-\bar{\mu}\brackbinom{V(\bar{u}_{2})_{-}}{v_{2-}}\bigg),\ \
\brackbinom{Vf_{1-}}{(\bar{u}_{1})_{-}}=\bar{\alpha}\bigg(\bar{\lambda}\brackbinom{Vf_{2-}}{(\bar{u}_{2})_{-}}-\brackbinom{Vg_{2-}}{(\bar{v}_{2})_{-}}\bigg).
\end{align*}
Substituting the above conditions into  \eqref{1.2} and \eqref{2.1}, and applying \eqref{eq:2m}, we obtain
\begin{align*}
&H_{\bar{u}_{2}}T_{\bar{f}_{1}}+\tilde{T}_{\bar{v}_{2}}H_{\bar{u}_{1}}-H_{\bar{u}_{1}}T_{\bar{f}_{2}}-\tilde{T}_{\bar{v}_{1}}H_{\bar{u}_{2}}\\
=&H_{\bar{u}_{2}\bar{f}_{1}}-\tilde{T}_{\bar{u}_{2}}H_{\bar{f}_{1}}+\tilde{T}_{\bar{v}_{2}}H_{\bar{u}_{1}}-H_{\bar{u}_{1}}T_{\bar{f}_{2}}-H_{\bar{v}_1\bar{u}_2}+H_{\bar{v}_{1}}T_{\bar{u}_{2}}\\
=&H_{\bar{u}_{2}\bar{f}_{1}}-\tilde{T}_{\bar{u}_{2}}H_{\bar{\lambda}\bar{u}_{1}}+\tilde{T}_{\bar{v}_{2}}H_{\bar{u}_{1}}-H_{\bar{u}_{1}}T_{\bar{f}_{2}}-H_{\bar{v}_1\bar{u}_2}+H_{\mu\bar{u}_{1}}T_{\bar{u}_{2}}\\
=&H_{\bar{u}_{2}\bar{f}_{1}-\bar{v}_1\bar{u}_2}+\tilde{T}_{\bar{v}_2-\bar{\lambda}\bar{u}_2}H_{\bar{u}_1}+H_{\bar{u}_1}T_{\mu \bar{u}_2-\bar{f}_2}\\
=&H_{\bar{u}_{2}\bar{f}_{1}-\bar{v}_1\bar{u}_2}+\tilde{T}_{\bar{v}_2-\bar{\lambda}\bar{u}_2}H_{\bar{\alpha}(\bar{f}_2-\mu\bar{u}_2)}+H_{\bar{\alpha}(\bar{\lambda}\bar{u}_2-\bar{v}_2)}T_{\mu \bar{u}_2-\bar{f}_2}\\
=&H_{\bar{u}_{2}\bar{f}_{1}-\bar{v}_1\bar{u}_2+\bar{\alpha}(\bar{v}_2-\bar{\lambda}\bar{u}_2)(\bar{f}_2-\mu\bar{u}_2)},
\end{align*}
and
\begin{align*}
&H_{g_{1}}T_{f_{2}}+\tilde{T}_{v_{1}}H_{g_{2}}-H_{g_{2}}T_{f_{1}}-\tilde{T}_{v_{2}}H_{g_{1}}\\
=&H_{\lambda v_{1}}T_{f_{2}}+H_{v_{1}g_{2}}-H_{v_1}T_{g_2}-H_{g_{2} f_{1}}+\tilde{T}_{g_{2}}H_{f_{1}}-\tilde{T}_{v_{2}}H_{\bar{\mu}f_{1}}\\
=&H_{v_{1}g_{2}-g_{2} f_{1}}+H_{v_1}T_{\lambda f_2-g_2}+\tilde{T}_{g_{2}-\bar{\mu}v_2}H_{f_{1}}\\
=&H_{v_{1}g_{2}-g_{2} f_{1}}+H_{\alpha(g_2-\bar{\mu}v_2)}T_{\lambda f_2-g_2}+\tilde{T}_{g_{2}-\bar{\mu}v_2}H_{\alpha (\lambda f_2-g_2)}\\
=&H_{v_{1}g_{2}-g_{2} f_{1}+\alpha(g_2-\bar{\mu}v_2)(\lambda f_2-g_2)}.
\end{align*}
In case \textbf{(L2)+(R4)}. Substituting $\brackbinom{V(\bar{u}_{1})_{-}}{v_{1-}}=\lambda \brackbinom{V(\bar{f_{1}})_{-}}{g_{1-}}$
and $\brackbinom{Vg_{1-}}{(\bar{v}_{1})_{-}}=\mu \brackbinom{Vf_{1-}}{(\bar{u}_{1})_{-}}$ into \eqref{WW}
shows that there exists a non-zero constant $\alpha$ such that
\begin{align*}
\brackbinom{V(\bar{f_{1}})_{-}}{g_{1-}}
=\alpha \bigg(\brackbinom{V(\bar{f_{2}})_{-}}{g_{2-}}-\bar{\mu}\brackbinom{V(\bar{u}_{2})_{-}}{v_{2-}}\bigg),\ \
\brackbinom{Vf_{1-}}{(\bar{u}_{1})_{-}}=\bar{\alpha}\bigg(\brackbinom{Vf_{2-}}{(\bar{u}_{2})_{-}}-\bar{\lambda}\brackbinom{Vg_{2-}}{(\bar{v}_{2})_{-}}\bigg).
\end{align*}
Substituting the above conditions into  \eqref{1.2} and \eqref{2.1}, and applying \eqref{eq:2m}, we obtain
\begin{align*}
&H_{\bar{u}_{2}}T_{\bar{f}_{1}}+\tilde{T}_{\bar{v}_{2}}H_{\bar{u}_{1}}-H_{\bar{u}_{1}}T_{\bar{f}_{2}}-\tilde{T}_{\bar{v}_{1}}H_{\bar{u}_{2}}\\
=&H_{\bar{u}_{2}\bar{f}_{1}}-\tilde{T}_{\bar{u}_{2}}H_{\bar{f}_{1}}+\tilde{T}_{\bar{v}_{2}}H_{\bar{u}_{1}}-H_{\bar{u}_{1}}T_{\bar{f}_{2}}-H_{\bar{v}_1\bar{u}_2}+H_{\bar{v}_{1}}T_{\bar{u}_{2}}\\
=&H_{\bar{u}_{2}\bar{f}_{1}}-\tilde{T}_{\bar{u}_{2}}H_{\bar{f}_{1}}+\tilde{T}_{\bar{v}_{2}}H_{\bar{\lambda}\bar{f}_{1}}-H_{\bar{u}_{1}}T_{\bar{f}_{2}}-H_{\bar{v}_1\bar{u}_2}+H_{\mu\bar{u}_{1}}T_{\bar{u}_{2}}\\
=&H_{\bar{u}_{2}\bar{f}_{1}-\bar{v}_1\bar{u}_2}+\tilde{T}_{\bar{\lambda}\bar{v}_2-\bar{u}_{2}}H_{\bar{f}_{1}}+H_{\bar{u}_{1}}T_{\mu \bar{u}_2-\bar{f}_{2}}\\
=&H_{\bar{u}_{2}\bar{f}_{1}-\bar{v}_1\bar{u}_2}+\tilde{T}_{\bar{\lambda}\bar{v}_2-\bar{u}_{2}}H_{\bar{\alpha}(\bar{f}_2-\mu \bar{u}_2)}+H_{\bar{\alpha}(\bar{u}_2-\bar{\lambda}\bar{v}_2)}T_{\mu \bar{u}_2-\bar{f}_{2}}\\
=&H_{\bar{u}_{2}\bar{f}_{1}-\bar{v}_1\bar{u}_2+\bar{\alpha}(\bar{f}_2-\mu \bar{u}_2)(\bar{\lambda}\bar{v}_2-\bar{u}_{2})},
\end{align*}
and
\begin{align*}
&H_{g_{1}}T_{f_{2}}+\tilde{T}_{v_{1}}H_{g_{2}}-H_{g_{2}}T_{f_{1}}-\tilde{T}_{v_{2}}H_{g_{1}}\\
=&H_{g_{1}}T_{f_{2}}+H_{v_{1}g_{2}}-H_{v_1}T_{g_2}-H_{g_{2} f_{1}}+\tilde{T}_{g_{2}}H_{f_{1}}-\tilde{T}_{v_{2}}H_{\bar{\mu}f_{1}}\\
=&H_{g_{1}}T_{f_{2}}+H_{v_{1}g_{2}}-H_{\lambda g_1}T_{g_2}-H_{g_{2} f_{1}}+\tilde{T}_{g_{2}}H_{f_{1}}-\tilde{T}_{v_{2}}H_{\bar{\mu}f_{1}}\\
=&H_{v_{1}g_{2}-g_{2} f_{1}}+H_{g_1}T_{ f_2-\lambda g_2}+\tilde{T}_{g_{2}-\bar{\mu}v_2}H_{f_{1}}\\
=&H_{v_{1}g_{2}-g_{2} f_{1}}+H_{\alpha(g_2-\bar{\mu}v_2)}T_{ f_2-\lambda g_2}+\tilde{T}_{g_{2}-\bar{\mu}v_2}H_{\alpha ( f_2-\lambda g_2)}\\
=&H_{v_{1}g_{2}-g_{2} f_{1}+\alpha(g_2-\bar{\mu}v_2)( f_2-\lambda g_2)}.
\end{align*}
In case \textbf{(L3)+(R4)}. Substituting $\brackbinom{Vf_{2-}}{(\bar{u}_{2})_{-}}=\lambda \brackbinom{Vg_{2-}}{(\bar{v}_{2})_{-}}$
and $\brackbinom{Vg_{1-}}{(\bar{v}_{1})_{-}}=\mu \brackbinom{Vf_{1-}}{(\bar{u}_{1})_{-}}$ into \eqref{WW}
shows that there exists a non-zero constant $\alpha$ such that
\begin{align*}
\bar{\lambda}\brackbinom{V(\bar{f_{1}})_{-}}{g_{1-}}-\brackbinom{V(\bar{u}_{1})_{-}}{v_{1-}}
=\alpha \bigg(\brackbinom{V(\bar{f_{2}})_{-}}{g_{2-}}-\bar{\mu}\brackbinom{V(\bar{u}_{2})_{-}}{v_{2-}}\bigg),\ \
\brackbinom{Vf_{1-}}{(\bar{u}_{1})_{-}}=\bar{\alpha}\brackbinom{Vg_{2-}}{(\bar{v}_{2})_{-}}.
\end{align*}
Substituting the above conditions into  \eqref{1.2} and \eqref{2.1}, and applying \eqref{eq:2m}, we obtain
\begin{align*}
&H_{\bar{u}_{2}}T_{\bar{f}_{1}}+\tilde{T}_{\bar{v}_{2}}H_{\bar{u}_{1}}-H_{\bar{u}_{1}}T_{\bar{f}_{2}}-\tilde{T}_{\bar{v}_{1}}H_{\bar{u}_{2}}\\
=&H_{\lambda \bar{v}_{2}}T_{\bar{f}_{1}}+H_{\bar{v}_{2}\bar{u}_{1}}-H_{\bar{v}_{2}}T_{\bar{u}_{1}}-H_{\bar{u}_{1}}T_{\bar{f}_{2}}-H_{\bar{v}_{1}\bar{u}_{2}}+H_{\bar{v}_{1}}T_{\bar{u}_{2}}\\
=&H_{\lambda \bar{v}_{2}}T_{\bar{f}_{1}}+H_{\bar{v}_{2}\bar{u}_{1}}-H_{\bar{v}_{2}}T_{\bar{u}_{1}}-H_{\bar{u}_{1}}T_{\bar{f}_{2}}-H_{\bar{v}_{1}\bar{u}_{2}}+H_{\mu\bar{u}_{1}}T_{\bar{u}_{2}}\\
=&H_{\bar{v}_{2}\bar{u}_{1}-\bar{v}_{1}\bar{u}_{2}}+H_{\bar{v}_{2}}T_{\lambda \bar{f}_1-\bar{u}_{1}}+H_{\bar{u}_1}T_{\mu \bar{u}_2 -\bar{f}_2}\\
=&H_{\bar{v}_{2}\bar{u}_{1}-\bar{v}_{1}\bar{u}_{2}}+H_{\bar{v}_{2}(\lambda \bar{f}_1-\bar{u}_{1})}+\tilde{T}_{\bar{v}_{2}}H_{\bar{\alpha} (\mu \bar{u}_2-\bar{f}_2)}+H_{\bar{\alpha} \bar{v}_2}T_{\mu \bar{u}_2 -\bar{f}_2}\\
=&H_{\bar{v}_{2}\bar{u}_{1}-\bar{v}_{1}\bar{u}_{2}+\bar{v}_{2}(\lambda \bar{f}_1-\bar{u}_{1})+\bar{\alpha}\bar{v}_{2} (\mu \bar{u}_2-\bar{f}_2)}\\
=&H_{-\bar{v}_{1}\bar{u}_{2}+\lambda \bar{v}_{2} \bar{f}_1+\bar{\alpha}\bar{v}_{2} (\mu \bar{u}_2-\bar{f}_2)},
\end{align*}
and
\begin{align*}
&H_{g_{1}}T_{f_{2}}+\tilde{T}_{v_{1}}H_{g_{2}}-H_{g_{2}}T_{f_{1}}-\tilde{T}_{v_{2}}H_{g_{1}}\\
=&H_{g_{1}f_2}-\tilde{T}_{g_{1}}H_{f_{2}}+\tilde{T}_{v_{1}}H_{g_{2}}-H_{g_2 f_1}+\tilde{T}_{g_{2}}H_{f_{1}}-\tilde{T}_{v_{2}}H_{\bar{\mu}f_{1}}\\
=&H_{g_{1}f_2}-\tilde{T}_{g_{1}}H_{\bar{\lambda} g_{2}}+\tilde{T}_{v_{1}}H_{g_{2}}-H_{g_2 f_1}+\tilde{T}_{g_{2}}H_{f_{1}}-\tilde{T}_{v_{2}}H_{\bar{\mu}f_{1}}\\
=&H_{g_{1}f_2-g_2 f_1}+\tilde{T}_{v_1-\bar{\lambda}g_{1}}H_{g_{2}}+\tilde{T}_{g_{2}-\bar{\mu}v_2}H_{f_{1}}\\
=&H_{g_{1}f_2-g_2 f_1}+H_{(v_1-\bar{\lambda}g_{1})g_2}-H_{v_1-\bar{\lambda}g_{1}}T_{g_{2}}+\tilde{T}_{g_{2}-\bar{\mu}v_2}H_{\alpha g_{2}}\\
=&H_{g_{1}f_2-g_2 f_1}+H_{(v_1-\bar{\lambda}g_{1})g_2}+H_{\alpha (g_2-\bar{\mu}v_2)}T_{g_{2}}+\tilde{T}_{g_{2}-\bar{\mu}v_2}H_{\alpha g_{2}}\\
=&H_{g_{1}f_2-g_2 f_1+(v_1-\bar{\lambda}g_{1})g_2+\alpha g_{2}(g_2-\bar{\mu}v_2)}.
\end{align*}
In case \textbf{(L4)+(R4)},
Substituting $\brackbinom{Vg_{2-}}{(\bar{v}_{2})_{-}}=\lambda \brackbinom{Vf_{2-}}{(\bar{u}_{2})_{-}}$
and $\brackbinom{Vg_{1-}}{(\bar{v}_{1})_{-}}=\mu \brackbinom{Vf_{1-}}{(\bar{u}_{1})_{-}}$ into \eqref{WW}
shows that there exists a non-zero constant $\alpha$ such that
\begin{align*}
\brackbinom{V(\bar{f_{1}})_{-}}{g_{1-}}-\bar{\lambda}\brackbinom{V(\bar{u}_{1})_{-}}{v_{1-}}
=\alpha \bigg(\brackbinom{V(\bar{f_{2}})_{-}}{g_{2-}}-\bar{\mu}\brackbinom{V(\bar{u}_{2})_{-}}{v_{2-}}\bigg),\ \
\brackbinom{Vf_{1-}}{(\bar{u}_{1})_{-}}=\bar{\alpha}\brackbinom{Vf_{2-}}{(\bar{u}_{2})_{-}}.	
\end{align*}
Substituting the above conditions into  \eqref{1.2} and \eqref{2.1}, and applying \eqref{eq:2m}, we obtain
\begin{align*}  
&H_{\bar{u}_{2}}T_{\bar{f}_{1}}+\tilde{T}_{\bar{v}_{2}}H_{\bar{u}_{1}}-H_{\bar{u}_{1}}T_{\bar{f}_{2}}-\tilde{T}_{\bar{v}_{1}}H_{\bar{u}_{2}}\\
=&H_{\bar{u}_{2}}T_{\bar{f}_{1}}+H_{\bar{v}_{2}\bar{u}_{1}}-H_{\bar{v}_{2}}T_{\bar{u}_{1}}-H_{\bar{u}_{1}}T_{\bar{f}_{2}}-H_{\bar{v}_{1}\bar{u}_{2}}+H_{\bar{v}_{1}}T_{\bar{u}_{2}}\\
=&H_{\bar{u}_{2}}T_{\bar{f}_{1}}+H_{\bar{v}_{2}\bar{u}_{1}}-H_{\lambda \bar{u}_{2}}T_{\bar{u}_{1}}-H_{\bar{u}_{1}}T_{\bar{f}_{2}}-H_{\bar{v}_{1}\bar{u}_{2}}+H_{\mu\bar{u}_{1}}T_{\bar{u}_{2}}\\
=&H_{\bar{v}_{2}\bar{u}_{1}-\bar{v}_{1}\bar{u}_{2}}+H_{\bar{u}_{2}}T_{\bar{f}_1-\lambda\bar{u}_{1}}+H_{\bar{u}_1}T_{\mu \bar{u}_2-\bar{f}_2}\\
=&H_{\bar{v}_{2}\bar{u}_{1}-\bar{v}_{1}\bar{u}_{2}}+H_{\bar{u}_2(\bar{f}_1-\lambda\bar{u}_{1})}-\tilde{T}_{\bar{u}_{2}}H_{\bar{f}_1-\lambda\bar{u}_{1}}+H_{\bar{\alpha}\bar{u}_2}T_{\mu \bar{u}_2-\bar{f}_2}\\
=&H_{\bar{v}_{2}\bar{u}_{1}-\bar{v}_{1}\bar{u}_{2}}+H_{\bar{u}_{2}(\bar{f}_1-\lambda\bar{u}_{1})}+\tilde{T}_{\bar{u}_{2}}H_{\bar{\alpha} (\mu \bar{u}_2-\bar{f}_2)}+H_{\bar{\alpha} \bar{u}_2}T_{\mu \bar{u}_2 -\bar{f}_2}\\
=&H_{\bar{v}_{2}\bar{u}_{1}-\bar{v}_{1}\bar{u}_{2}+\bar{u}_{2}(\bar{f}_1-\lambda \bar{u}_{1})+\bar{\alpha}\bar{u}_{2} (\mu \bar{u}_2-\bar{f}_2)},
\end{align*}   
and
\begin{align*}
&H_{g_{1}}T_{f_{2}}+\tilde{T}_{v_{1}}H_{g_{2}}-H_{g_{2}}T_{f_{1}}-\tilde{T}_{v_{2}}H_{g_{1}}\\
=&H_{g_{1}f_2}-\tilde{T}_{g_{1}}H_{f_{2}}+\tilde{T}_{v_{1}}H_{g_{2}}-H_{g_2 f_1}+\tilde{T}_{g_{2}}H_{f_{1}}-\tilde{T}_{v_{2}}H_{\bar{\mu}f_{1}}\\
=&H_{g_{1}f_2}-\tilde{T}_{g_{1}}H_{f_{2}}+\tilde{T}_{v_{1}}H_{\bar{\lambda} f_{2}}-H_{g_2 f_1}+\tilde{T}_{g_{2}}H_{f_{1}}-\tilde{T}_{v_{2}}H_{\bar{\mu}f_{1}}\\
=&H_{g_{1}f_2-g_2 f_1}+\tilde{T}_{\bar{\lambda} v_1-g_{1}}H_{f_{2}}+\tilde{T}_{g_{2}-\bar{\mu}v_2}H_{f_{1}}\\
=&H_{g_{1}f_2-g_2 f_1}+H_{(\bar{\lambda} v_1-g_{1})f_2}-H_{\bar{\lambda} v_1-g_{1}}T_{f_{2}}+\tilde{T}_{g_{2}-\bar{\mu}v_2}H_{\alpha f_{2}}\\
=&H_{g_{1}f_2-g_2 f_1}+H_{(\bar{\lambda}v_1-g_{1})f_2}+H_{\alpha (g_2-\bar{\mu}v_2)}T_{f_{2}}+\tilde{T}_{g_{2}-\bar{\mu}v_2}H_{\alpha f_{2}}\\
=&H_{g_{1}f_2-g_2 f_1+(\bar{\lambda}v_1-g_{1})f_2+\alpha f_{2}(g_2-\bar{\mu}v_2)}\\
=&H_{-g_2 f_1+\bar{\lambda}v_1f_2+\alpha f_{2}(g_2-\bar{\mu}v_2)}.
\end{align*}

\bigskip \noindent{\bf Acknowledgement}.
The author is grateful to  Hui Dan for his invaluable suggestions.

\textbf{Declarations}

Conficts of interest /Competing interests: The authors declare no confict of interests.\\
Availability of data and material: The data and material are transparent.\\
Code availability (software application or custom code): Not applicable.\\
Ethics approval (include appropriate approvals or waivers): Not applicable.\\
Consent to participate (include appropriate statements): The authors consent to participate.\\
Consent for publication (include appropriate statements): The authors consent for publication.\\
\end{document}